\def\scaledpicture#1by#2(#3scaled#4){{
\dimen0=#1  \dimen1=#2
\divide\dimen0 by 1000 \multiply\dimen0 by #4
\divide\dimen1 by 1000 \multiply\dimen1 by #4
\picture \dimen0 by \dimen1 (#3 scaled #4)}}
\def\dfigure#1by#2(#3scaled#4offset#5:#6)
\def\qmod#1#2{{\hbox{}^{\displaystyle{#1}}}\!\big/\!\hbox{}_{
\displaystyle{#2}}}
\def\resto#1#2{{
#1\hskip 0.4ex\vline_{\hskip 0.2ex\raisebox{-0,2ex}
{{${\scriptstyle #2}$}}}}}
\def\C{{\mathbb C}}
\def\F{{\mathbb F}}
\def\N{{\mathbb N}}
\def\P{{\mathbb P}}
\def\R{{\mathbb R}}
\def\Z{{\mathbb Z}}
\def\union{\mathop{\bigcup}}
\def\textmap#1{\mathop{\vbox{\ialign{
                                  ##\crcr
      ${\scriptstyle\hfil\;\;#1\;\;\hfil}$\crcr
      \noalign{\kern 1pt\nointerlineskip}
      \rightarrowfill\crcr}}\;}}
\def\bigtextmap#1{\mathop{\vbox{\ialign{
                                  ##\crcr
      ${\hfil\;\;#1\;\;\hfil}$\crcr
      \noalign{\kern 1pt\nointerlineskip}
      \rightarrowfill\crcr}}\;}}
\newcommand{\cal}{\mathcal}
\def\textlmap#1{\mathop{\vbox{\ialign{
                                  ##\crcr
      ${\scriptstyle\hfil\;\;#1\;\;\hfil}$\crcr
      \noalign{\kern-1pt\nointerlineskip}
      \leftarrowfill\crcr}}\;}}
\def\fg{{\mathfrak f}}
\def\g{{\mathfrak g}}
\def\pg{{\mathfrak p}}
\def\sg{{\mathfrak s}}
\def\Sg{{\mathfrak S}}
\newtheorem{sz}{Satz}[section]
\newtheorem{thry}[sz]{Theorem}
\newtheorem{pr}[sz]{Proposition}
\newtheorem{re}[sz]{Remark}
\newtheorem{co}[sz]{Corollary}
\newtheorem{dt}[sz]{Definition}
\newtheorem{lm}[sz]{Lemma}
\def\U{\mathrm{U}}
\def\deg{\mathrm {deg}}
\def\Hom{\mathrm{Hom}}
\def\id{ \mathrm{id}}
\def\im{\mathrm{im}}
\def\rk{\mathrm {rk}}
\def\U2{\mathrm{U(2)}}
\def\niq{=\kern-.18cm /\kern.08cm}
\def\oo{{\scriptstyle{\cal O}}}
\def\mult{\mathrm{mult}}
\begin{document}

\title{A wall crossing formula for degrees of real central projections}
\author{Christian Okonek \and  Andrei Teleman}
\begin{abstract}
The main result is a wall crossing formula for central projections defined on submanifolds of a real projective space. Our formula gives the jump of the degree of such a projection when the center of the projection varies. The fact that the degree depends on the projection is a new phenomenon, specific to real algebraic geometry. We illustrate this phenomenon in many interesting situations. The crucial assumption on the class of maps we consider is relative orientability, a condition which allows us to define a $\Z$-valued degree map in a coherent way. We end the article with several examples, e.g. the pole placement map associated with a quotient, the Wronski map, and a new version of the real subspace problem.
\end{abstract}

\maketitle

\setcounter{section}{-1}

\section{Introduction}

One of the fundamental problems in real algebraic geometry concerns the  solutions of systems of real algebraic equations. Since, in general, even the existence of real solutions is not guaranteed, it is important to find a  priori lower bounds for the number of these solutions.  

In the last years  several important developments have taken place in this direction, which are related to problems in enumerative geometry \cite{DeK},   \cite{IKS}, \cite{FK}, \cite{W1}, \cite{W2}, \cite{OT2}, or to the study of  certain polynomial systems which often have interesting applications \cite{EG1}, \cite{EG2}, \cite{SS}.  In many cases these lower bounds are provided by the degrees of certain maps, for instance  central projections of projective submanifolds \cite{EG1}, \cite{EG2}, \cite{So}, \cite{SS}. For a projective $m$-dimensional submanifold $X\subset \P^{N-1}_\C$, and a central projection $\pi:\P^{N-1}_\C\dasharrow\P^m_\C$  whose center does not intersect $X$, one gets a  finite map $\resto{\pi}{X}:X\to\P^m_\C$ whose  degree  with respect to complex orientations  coincides with the number of points of a  fibre $(\resto{\pi}{X})^{-1}(p)$   if multiplicities are taken into account.   Note  that this fibre  can be   regarded as the set of solutions of a system of  homogeneous algebraic equations.  This degree can also be identified with the number of points of the intersection $X\cap H$ for a general codimension $m$ projective subspace $H\subset \P^{N-1}_\C$; hence it is cohomologically determined and independent of the choice of the central projection $\pi$ (as long as its center does not intersect $X$).  This  is an important example of the fundamental principle of  {\it conservation of  numbers}. It is well known that this principle does not hold in real algebraic geometry. One of the goals of this article is to show that  in real geometry, this principle should be replaced by a {\it wall crossing formula}. Such wall crossing formulae play an important role in gauge theory \cite{DoK} \cite{OT1}, but apparently the wall crossing phenomenon (jump of an invariant in a well controlled way) has not been studied until now in real algebraic geometry. The general principle of wall crossing is very simple: one has a parameter space ${\cal P}$ which parameterizes a certain class of maps, and a wall ${\cal W}\subset {\cal P}$ of {\it bad} maps. If certain conditions are fulfilled,  one can define a degree map
$$\deg:\pi_0({\cal P}\setminus {\cal W})\to\Z
$$
which associates to any chamber  $C\in \pi_0({\cal P}\setminus {\cal W})$ an integer. A wall crossing formula computes the difference $\deg(C_+)-\deg(C_-)$ between the integers associated to two adjacent  chambers.  In this article we will prove such a  wall crossing formula  for the space of central projections restricted to  a smooth, {\it  not necessarily algebraic} submanifold 
$X$ of a real projective space. We are not aware of any systematic results in this generality. Note  in particular  that standard algebro-geometric techniques like e.g. elimination theory, do not apply.

The  crucial assumption on the class of maps we consider is  {\it relative orientability}, a condition which allows us to define a $\Z$-valued degree map in a coherent way.  This wall crossing phenomenon for real central projections $\P(V)\supset X\to\P(W)$ pointed out by our result is in striking contrast to the invariance of the degree  of a section  in a relatively oriented vector bundle  (see \cite{OT2}). We describe now briefly  the results  of this article: \\

In the first section we introduce the important notion of relative orientation of a map between topological manifolds,  and we define the degree of a relatively oriented map between closed manifolds of the same dimension.  For relatively {\it orientable} maps $f$ (so for maps which  can be relatively oriented, but have not been endowed with a relative orientation) one can define the absolute degree $|\deg|(f)$, which is similar to Kronecker's concept of  {\it characteristic} and Hopf's {\it absolute degree}.   In the differentiable case the degree of a relatively oriented map can be computed using the local degrees at the points of a finite fibre. The special case of a Real finite  holomorphic map $f:X\to Y$  between Real complex manifolds of the same dimension is particularly important. If the restriction
$$f(\R):X(\R)\to Y(\R)
$$
 is relatively orientable, then there are fundamental estimates and comparison formulae for the sum of the multiplicities of the points of a real fibre $f(\R)^{-1}(y)$, $y\in Y(\R)$:
 $$|\deg|(f(\R))\leq \sum_{x\in f(\R)^{-1}(y)} \mult_x f(\R)\leq \deg(f)\ ,
 $$
 $$|\deg|(f(\R))\equiv \sum_{x\in f(\R)^{-1}(y)} \mult_x f(\R)\equiv \deg(f)\  ({\rm mod}\ 2)\ .
 $$

 In the second section  we study central projections. Let $V$, $W$ be real vector spaces, $f\in\Hom(V,W)$ and
 $$[f]:\P(V)\setminus\P(\ker(f))\to\P(W)
 $$
the induced morphism. Let $X\subset \P(V)$ be a compact submanifold with $\dim(X)=\dim(\P(W))$ and $X\cap\P(\ker(f))=\emptyset$. The induced map
$$[f]_X:X\to\P(W)
$$
is relatively orientable if and only if  
$$w_1(X)=(\dim(X)+1) w_1(\lambda_{V,X})\ ,$$
where $\lambda_{V,X}:=\resto{\lambda_V}{X}$ denotes the restriction to $X$ of  the tautological line bundle  $ \lambda_V$  of $\P(V)$. It follows in particular  that the relative orientability of such central projections $[f]_X$ is independent of $f$. 

The tensor product  $T_X\otimes \lambda_{V,X}$   can be written as $Y_X/\lambda_{V,X}$, where $Y_X$ is  a  subbundle  of the trivial bundle $\underline{V}_X:=X\times V$.
With this definition we obtain a canonical identification 
$$
T_X=\Hom\big(\lambda_{V,X}, \qmod{ Y_X}{\lambda_{V,X}}\big)\ .
$$

 The data of a  relative orientation of a map $f\in \Hom(V,W)$  with $X\cap\P(\ker(f))=\emptyset$  is equivalent to the data of a bundle isomorphism
$$\mu: X\times\det(W)\to \det (Y_X)\ .
$$
In our situation the space ${\cal P}$ parameterizing the relevant maps is the vector space $\Hom(V,W)$,  and the wall associated with the submanifold $X\subset \P(V)$ is:
$${\cal W}_X:=\{f\in\Hom(V,W)|\ X\cap \P(\ker(f))\ne\emptyset\}\ .
$$

A point  $f_0\in{\cal W}_X$  on the wall is called regular when $f_0$ is surjective, the intersection $X\cap \P(\ker(f_0))$ consists  only of one point $\xi_0$, and $\ker(f_0)\cap Y_{\xi_0}=\xi_0$. 
%
Let ${\cal W}_X^0$ be the subspace of regular points in ${\cal W}_X$. Our first result in section \ref{Wall} shows that the wall ${\cal W}_X\subset\Hom(V,W)$ is a smooth hypersurface in all regular points $f_0\in {\cal W}_X^0$ and identifies the normal line $N_{{\cal W}^0_X,f_0}$ with $\Hom(\det(Y_{\xi_0}),\det(W))$ via a canonical isomorphism 
$$\psi_{f_0}:N_{{\cal W}^0_X,f_0}\to\Hom(\det(Y_{\xi_0}),\det(W))\ .$$
This is not a standard result, since the manifold $X$ is not supposed to be algebraic. It also  shows that the choice of an orientation parameter $\mu:X\times \det(W)\to \det(Y_X)$  serves a second, completely different purpose: $\mu_{\xi_0}^{-1}:\det(Y_{\xi_0})\to\det(W)$ defines a generator of the normal line  $N_{{\cal W}^0_X,f_0}$ for every $f_0\in {\cal W}_X^0$ with $\ker(f_0)\cap Y_{\xi_0}=\xi_0$.

The main result in this section is the wall crossing formula (see Theorem \ref{wall-cross-th}):
\\ \\
{\bf Theorem}   (Wall-crossing formula) {\it Let  $f_0\in \mathcal{W}_X^0$  be a regular  point on the wall with $\ker(f_0)\cap Y_{\xi_0}=\xi_0$. Consider a smooth map   
$$\fg :N_{{\cal W}^0_X,f_0}\to\Hom(V,W)$$
whose differential $\fg_{*,0}$ is a right splitting of the exact sequence
$$0\to  T_{\mathcal{W}_X^0,f_0}\to  T_{\Hom(V,W),f_0}\to N_{{\cal W}^0_X,f_0}\to 0\ .$$
Then for every sufficiently small $\tau\in N_{{\cal W}^0_X,f_0}\setminus\{0\}$ we have: 
\begin{enumerate}
\item $\fg(\tau)\in\Hom(V,W)\setminus {\cal W}_X$ and  $[\fg(\tau)]_X$ is a local diffeomorphism at $\xi_0$.
\item  The local degree of $[\fg(\tau)]_X$ at $\xi_0$ is 
$$\deg_{\nu(\fg(\tau),\mu),\xi_0}([\fg(\tau)]_X)=\mathrm{sign}(\mu_{\xi_0}\circ(\psi_{f_0}(\tau)))\ .$$
\item $\deg_{\nu(\fg(\tau),\mu)}([\fg(\tau)]_X)-\deg_{\nu(\fg(-\tau),\mu)}([\fg(-\tau)]_X)=2\mathrm{sign}(\mu_{\xi_0}\circ(\psi_{f_0}(\tau)))$.
\end{enumerate}
}
\vspace{3mm}

As a consequence we obtain a general formula which computes the degree difference $\deg_{\nu(g_1,\mu)}([g_1]_X)-\deg_{\nu(g_0,\mu)}([g_0]_X)$ for a smooth path $(g_t)_{t\in[0,1]}$ in $\Hom(V,W)$ with $g_0$, $g_1\in \Hom(V,W)\setminus {\cal W}_X$, which intersects the wall ${\cal W}_X$ only in regular points with transversal intersection.  

The third result in section \ref{Wall} describes the irregular locus ${\cal W}_X\setminus {\cal W}_X^0$ of the wall. We show that ${\cal W}_X\setminus {\cal W}_X^0$ is closed and its complement in $\Hom(V,W)$ is connected. This implies that any two points $g_0$, $g_1\in \Hom(V,W)\setminus {\cal W}_X$ can be connected by a smooth path $(g_t)_{t\in[0,1]}$ which intersects  ${\cal W}_X$ only along  
${\cal W}_X^0$  with transversal intersection. In other words, the difference $\deg_{\nu(g_1,\mu)}([g_1]_X)-\deg_{\nu(g_0,\mu)}([g_0]_X)$ can always be  computed by our difference formula.  

Important applications of our difference formula are the following general properties of the degree map
$$\deg_\mu^X:\pi_0(\Hom(V,W)\setminus{\cal W}_X)\to \Z\ .$$
\begin{enumerate}
\item All  values of the degree map   are congruent modulo 2.
\item If $a\in \im(\deg_\mu^X)$,  then any integer $c$ with $-|a|\leq c\leq |a|$ which has the same parity as $a$ also belongs to $\im(\deg_\mu^X)$.
\end{enumerate}

The second ``no gaps" property can be considered as a strong existence result; it shows that the phenomenon exhibited by the theorem of Brockett and Segal (\cite{Se} p. 41, see also section \ref{DegRatFunct} below) is completely general.

 
In the third section we discuss  examples.  First we describe a large class of important Real complex manifolds, the so called conjugation manifolds \cite{HHP}, for which the orientability of $f(\R)$ can be easily checked. This class contains   Grassmann manifolds and  toric manifolds with their standard Real structures.  Then we discuss Wronski projections, the universal pole placement map, and a real subspace problem. Note that in many interesting situations one has a canonical relative orientation of the considered projections.  In these situations one obtains canonical signs corresponding to the points  of a regular fibre.

\section{Degrees of real maps}

By topological manifold we always mean a topological manifold which is Hausdorff and paracompact.

 Let $M$ be an $n$-dimensional  topological manifold, and $\oo_M$ its orientation sheaf; for every point $m\in M$ we have 
$$\oo_{M,m}:=H_n(M,M\setminus\{m\},\Z)\ .
$$
 Note that for every locally constant sheaf $\xi$ on $M$ and for every open neighborhood $U$ of $m$ one has canonical identifications  
$$H_n(M,M\setminus\{m\},\xi)=H_n(U,U\setminus\{m\},\resto{\xi}{U})=H_n(U,U\setminus\{m\},\xi_m)=$$
$$=H_n(M,M\setminus\{m\},\xi_m)=\oo_{M,m}\otimes\xi_m\ ,
$$
and 
$$H^n(M,M\setminus\{m\},\xi)=H^n(U,U\setminus\{m\},\resto{\xi}{U})=H^n(U,U\setminus\{m\},\xi_m)=$$
$$=H^n(M,M\setminus\{m\},\xi_m)=\Hom(\oo_{M,m},\xi_m)\ .
$$

Suppose now that $M$ is connected and closed. The canonical fundamental class of $M$ in cohomology is the canonical generator  $\{M\}$ of  $H^n(M,\oo_M)$; for every $m\in M$  this class can be written as
$$\{M\}=j_{m}(c_m)
$$
where  $j_m:H^*(M,M\setminus\{m\},\oo_M)\to H^*(M,\oo_M)$ is the natural morphism, and $c_m=\id_{\oo_{M,m}}$ is the canonical generator of $H^n(M,M\setminus\{m\},\oo_M)$.

Similarly, the canonical class of $M$ in homology is the class  $[M]\in H_n(M,\oo_M)$ whose image in $H_n(M,M\setminus\{m\},\oo_M)$ is the canonical generator of this group for every $m\in M$.

\begin{dt} Let $M$, $N$ be differentiable manifolds, and   $g:M\to N$ a continuous map. A  relative orientation of $g$ is an isomorphism $\nu:g^*(\oo_N)\to \oo_M$. A   relatively oriented map is a pair $(g,\nu)$, where $\nu$ is a relative orientation of $g$. A continuous map $g:M\to N$ is called relatively orientable if it admits a relative orientation.
\end{dt}
Since $w_1$ classifies real line bundles on paracompact spaces, we obtain:
\begin{re}\label{w} A continuous map $g:M\to N$  between differentiable manifolds is relatively orientable if and only if $g^*(w_1(T_N))=w_1(T_M)$.
\end{re}
\begin{re} \label{H1} If $H^1(M,\Z_2)=0$, then any continous map $g:M\to N$ is   relatively orientable. In particular any  map $g:S^n\to N$ ($n\geq 2$) is  relatively orientable. 
\end{re}

Note that if $\dim(M)=\dim(N)=:n$ and $(g,\nu)$ is a relatively oriented differentiable map $M\to N$ between closed  connected manifolds, one can define the degree $\deg_\nu(g)$ by the formula
\begin{equation}\label{degDef}\deg_\nu(g)\{M\}=\nu_*(g^*(\{N\}))\ .
\end{equation}

If one just knows that $g$ is relatively orientable, but did not fix a relative orientation of it, one can still define  the {\it  absolute degree}  $|\deg|(g)$ of $g$ by
$$|\deg|(g):=|\deg_\nu(g)|
$$
for any relative orientation $\nu$ of $g$.

Let $x\in M$ be an isolated point in the fibre over its image $y:=g(x)$, and choose charts $h:(U,x)\to (U',0)\subset(\R^n,0)$, $k:(V,y)\to (V',0)\subset(\R^n,0)$ such that the   orientations of $T_xM$, $T_yN$ defined by the two charts correspond via $\nu_x$. Then we define
$$\deg_{\nu,x}(g):=\deg_0(k\circ g\circ h^{-1})\ ,
$$
where the right hand term is computed with respect to the standard orientation of $\R^n$. This degree can be defined in an intrinsic way using the map 
$$g_x:(U,U\setminus\{x\})\to (N,N\setminus\{y\})$$
 (which is well defined when $U$ is sufficiently small) and writing  
\begin{equation}\label{local}
 \nu_x(e_xg_x^*(c_y))=\deg_{\nu,x}(g) c_x\ ,
\end{equation}
where $e_x:H^n(U,U\setminus\{x\},\oo_y)\to H^n(M,M\setminus\{x\},\oo_y)$ is  the isomorphism defined by excision. 
\begin{pr} \label{degformula} Let $M$, $N$ be closed connected $n$-manifolds,  $g:M\to N$ a smooth map, $\nu:g^*(\oo_N)\to\oo_M$  a relative orientation, and $y\in N$  a point with $g^{-1}(y)$ finite. Then
\begin{equation}\label{sum}\deg_\nu(g)=\sum_{x\in g^{-1}(y)} \deg_{\nu,x} (g)\ .
\end{equation}
\end{pr}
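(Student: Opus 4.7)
The plan is to prove the formula by reducing the global identity (\ref{degDef}) to a sum of local identities (\ref{local}) via excision and naturality. The idea is to factor the class $g^*(\{N\})$ through the relative cohomology group of the pair $(M, M\setminus g^{-1}(y))$, which decomposes as a direct sum of local contributions at each point of the fibre.

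First I would set up notation. Write $g^{-1}(y) = \{x_1,\dots,x_k\}$ and choose pairwise disjoint open neighborhoods $U_i$ of $x_i$ in $M$ small enough that $g_{x_i}:(U_i,U_i\setminus\{x_i\})\to (N,N\setminus\{y\})$ is defined in the sense used for (\ref{local}). Then $g$ restricts to a map of pairs $\tilde g:(M,M\setminus g^{-1}(y))\to (N,N\setminus\{y\})$, and the local maps $g_{x_i}$ factor $\tilde g$ through the excision isomorphism
$$E:\bigoplus_{i=1}^k H^n(U_i,U_i\setminus\{x_i\},\oo_N{}_{,y})\xrightarrow{\ \cong\ } H^n(M,M\setminus g^{-1}(y),\oo_N{}_{,y})\ .$$

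Next I would build the commutative diagram tying the global fundamental class to the local generators. Starting from $c_y\in H^n(N,N\setminus\{y\},\oo_N)$, the key point is that $\tilde g^*(c_y)\in H^n(M,M\setminus g^{-1}(y),g^*\oo_N)$ satisfies
$$E^{-1}\bigl(\tilde g^*(c_y)\bigr)=\bigl(g_{x_i}^*(c_y)\bigr)_{i=1}^k$$
by functoriality of excision, and that $j_y(c_y)=\{N\}$ together with naturality of the connecting maps $j_m$ gives
$$j_{g^{-1}(y)}\bigl(\tilde g^*(c_y)\bigr)=g^*(\{N\}),$$
where $j_{g^{-1}(y)}:H^n(M,M\setminus g^{-1}(y),g^*\oo_N)\to H^n(M,g^*\oo_N)$ is the natural map. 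Applying the relative orientation $\nu$ (which commutes with all these natural transformations because $\nu$ is a morphism of locally constant sheaves) then yields
$$\deg_\nu(g)\{M\}=\nu_*\bigl(g^*(\{N\})\bigr)=j_{g^{-1}(y)}\Bigl(\nu_*\bigl(\tilde g^*(c_y)\bigr)\Bigr).$$

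Now I would compute the right-hand side via the excision decomposition. Using the identification $e_{x_i}$ from the definition (\ref{local}) and the compatibility of $\nu$ with restriction to stalks, one has
$$\nu_*\bigl(\tilde g^*(c_y)\bigr)=\sum_{i=1}^k e_{x_i}\bigl(\nu_{x_i}(g_{x_i}^*(c_y))\bigr)=\sum_{i=1}^k \deg_{\nu,x_i}(g)\,e_{x_i}(c_{x_i}),$$
where $c_{x_i}=\id_{\oo_{M,x_i}}$ is the local generator. Since $j_{x_i}(c_{x_i})=\{M\}$ by the intrinsic description of the fundamental class recalled at the beginning of the section, and since $j_{g^{-1}(y)}\circ e_{x_i}=j_{x_i}$, pushing forward to $H^n(M,\oo_M)$ yields $\sum_i \deg_{\nu,x_i}(g)\,\{M\}$. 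Comparing with the previous display gives (\ref{sum}).

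The main obstacle is purely bookkeeping: one must verify that the relative orientation $\nu$, viewed as a morphism of locally constant sheaves $g^*\oo_N\to \oo_M$, is compatible with all the natural maps (pullback by $g$, excision, the connecting maps $j_m$, and passage to stalks). Once this naturality is granted — which is automatic because each construction is functorial in the coefficient sheaf — the argument reduces to diagram chasing. No transversality or smoothness is really needed beyond ensuring that the local degrees $\deg_{\nu,x_i}(g)$ are well-defined integers, which is guaranteed by the hypothesis that the fibre is finite.
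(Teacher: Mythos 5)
Your proposal is correct and follows essentially the same route as the paper's proof: write $\{N\}=j_y(c_y)$, pull back $c_y$ through the map of pairs, use excision to split $H^n(M,M\setminus g^{-1}(y),g^*\oo_N)$ into a direct sum over the fibre, identify the component at each $x_i$ with $g_{x_i}^*(c_y)$, and then apply $\nu$ and the local-degree formula (\ref{local}) before pushing forward to $H^n(M,\oo_M)$. The only cosmetic difference is that you phrase the decomposition via $E^{-1}$ and a tuple of local classes, whereas the paper writes the same thing as a sum $\tilde g^*(c_y)=\sum_i g_{x_i}^*(c_y)$ under the inverse excision identification.
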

\begin{proof}
  
Write $\{N\}=j_y(c_y)$, where $c_y\in H^n(N,N\setminus\{y\},\oo_N)$ is the canonical generator. One gets a pull-back class 
$$g^*(c_y)\in H^n(M,M\setminus g^{-1}(y) ,g^*(\oo_N))\ ,
$$
and $g^*(\{N\})$ can be written as
$$g^*(\{N\})=J_{M,y}(g^*(c_y))\ ,
$$
where $J_{M,y}:H^n(M,M\setminus g^{-1}(y) ,g^*(\oo_N))\to H^n(M,g^*(\oo_N))$ is the canonical map.

For every $x\in g^{-1}(y)$ let $U_x$ be an open  neighborhood of $x$ such that  $U_{x_1}\cap U_{x_2}=\emptyset$ for $x_1\ne x_2$. Put $U:=\union_{x\in g^{-1}(y)} U_x$, denote by $g_x:(U_x, U_x\setminus\{x\})\to (N,N\setminus\{y\})$ the  restriction of $g$, and note that the inclusion  $U\hookrightarrow M$ induces an   isomorphism 
$$H^n(U,U\setminus g^{-1}(y) ,(\resto{g}{U})^*(\oo_N))\to H^n(M,M\setminus g^{-1}(y) ,g^*(\oo_N))\ ,\ $$
by excision.  We denote by 
$$J_{U,y}:H^n(U,U\setminus g^{-1}(y) ,(\resto{g}{U})^*(\oo_N))\to H^n(M,g^*(\oo_N)) $$
  the composition of $J_{M,y}$ with this isomorphism. One has an obvious isomorphism
$$H^n(U,U\setminus g^{-1}(y) ,(\resto{g}{U})^*(\oo_N))=\bigoplus_{x\in g^{-1}(y)} H^n(U_x,U_x\setminus \{x\} ,\oo_y)
$$
and the restriction of  $J_{U,y}$ to a summand $H^n(U_x,U_x\setminus \{x\} ,\oo_y)$ is the composition $j_x\circ e_x$ of the  isomorphism $e_x:H^n(U_x,U_x\setminus\{x\},\oo_y)\to H^n(M,M\setminus\{x\},\oo_y)$  with the canonical map $J_x:H^*(M,M\setminus\{x\},\oo_y)\to H^*(M,g^*(\oo_N))$. Therefore
$$\nu g^*(\{N\})=\nu J_{U,y}((\resto{g}{U})^*(c_y))=\nu J_{U,y}\big( \sum_{x\in g^{-1}(y)}g_x^*(c_y)\big)= \nu\big(\hspace{-1mm}\sum_{x\in g^{-1}(y)} J_x\circ e_x\big(  g_x^*(c_y)\big) \big)
$$
$$=\sum_{x\in g^{-1}(y)} \nu_x\circ J_x\circ e_x\big( g_x^*(c_y)\big)=\sum_{x\in g^{-1}(y)} j_x\circ \nu_x \circ e_x\big( g_x^*(c_y)\big)=\sum_{x\in g^{-1}(y)} \deg_{\nu,x} (g) j_{x} (c_x)  $$
$$=
\big\{\sum_{x\in g^{-1}(y)} \deg_{\nu,x} (g)  \big\}\{M\}\ . 
$$
by (\ref{local}).
\end{proof}
\begin{pr}\label{estimates} Let $X$, $Y$ be compact complex manifolds endowed with Real structures,  let $f:X\to Y$ be a  finite Real   holomorphic map such that the induced map $f(\R):X(\R)\to Y(\R)$ is relatively oriented. Then
\begin{enumerate}
\item \label{firstEst} For any $y\in Y(\R)$ one has
$$|\deg|(f(\R))\leq \sum_{x\in f(\R)^{-1}(y)} \mult_x f(\R)\leq \deg(f)\ .$$
\item $\deg(f(\R))\equiv \deg(f)$ mod 2.
\item For any $y\in Y(\R)$ one has
$$\deg(f(\R)))\equiv\sum_{x\in f(\R)^{-1}(y)}\mult_x f(\R)\hbox{ (\rm mod 2) }\ .$$
\end{enumerate}
\end{pr}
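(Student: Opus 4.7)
The plan is to reduce everything to a local statement at each real preimage and then apply Proposition \ref{degformula}. The central local assertion is that for an isolated point $x\in f(\R)^{-1}(y)$,
\[
|\deg_{\nu,x}(f(\R))|\leq \mult_x f(\R),\qquad \deg_{\nu,x}(f(\R))\equiv \mult_x f(\R)\pmod 2.
\]
Granting this, assertion (1) is immediate: Proposition \ref{degformula} gives $\deg_\nu(f(\R))=\sum_{x\in f(\R)^{-1}(y)}\deg_{\nu,x}(f(\R))$, and the triangle inequality combined with the local bound yields the first inequality, while the second is just $\sum_{x\in f(\R)^{-1}(y)}\mult_x f(\R)\leq \sum_{x\in f^{-1}(y)}\mult_x f=\deg(f)$. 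For (3), summing the local parity congruences and invoking Proposition \ref{degformula} gives $\deg(f(\R))\equiv\sum_{x\in f(\R)^{-1}(y)}\mult_x f(\R)\pmod 2$. Assertion (2) then follows because the Real structure pairs the non-real points of $f^{-1}(y)$ into conjugate pairs of equal multiplicity, so $\deg(f)=\sum_{x\in f^{-1}(y)}\mult_x f\equiv\sum_{x\in f(\R)^{-1}(y)}\mult_x f(\R)\pmod 2$.

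To prove the local statement, I would pick small Real neighbourhoods $U_x\subset X$, $V_y\subset Y$ such that $f\colon U_x\to V_y$ is finite and proper with $f^{-1}(y)\cap U_x=\{x\}$, and then choose a real point $y'\in V_y(\R)$ close to $y$ which is simultaneously a complex regular value of $f$ and a regular value of $f(\R)$. Such $y'$ exist: by the identity theorem for totally real submanifolds the image of the complex ramification locus of $f$ has no interior in $Y(\R)$, and Sard handles the real critical values. The complex fibre $f^{-1}(y')\cap U_x$ then consists of $m:=\mult_x f$ simple points; the Real involution permutes this finite set, so it splits into $r$ real points $x_1',\dots,x_r'$ and $k$ conjugate pairs with $r+2k=m$, whence $r\leq m$ and $r\equiv m\pmod 2$. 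At each $x_i'$ the map $f$ is locally biholomorphic, so $f(\R)$ is a local diffeomorphism and $\deg_{\nu,x_i'}(f(\R))=\pm 1$. The intrinsic definition (\ref{local}) together with the invariance of the degree of a proper map of pairs under small perturbations of the target point yields the local additivity $\deg_{\nu,x}(f(\R))=\sum_{i=1}^r \deg_{\nu,x_i'}(f(\R))$, from which both the bound and the parity follow.

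The main obstacle is this last local additivity identity, which requires some care because the relative orientation data is pulled back from two nearby but distinct real points, namely $y$ on the left and $y'$ on the right. Continuity of $\nu$ along a path from $y$ to $y'$ in $Y(\R)$, combined with the excision definition (\ref{local}) of the local degree and the commutativity of the resulting diagram on the sheaves $\oo_{X(\R)}$ and $f(\R)^*\oo_{Y(\R)}$, makes this routine, but it is the step where the global nature of the relative orientation enters essentially.
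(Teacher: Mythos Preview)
Your proposal is correct, and the ingredients are the same as the paper's: Proposition~\ref{degformula}, the bound $|\deg_{\nu,x}(f(\R))|\leq \mult_x f$, the identity $\mult_x f(\R)=\mult_x f$ at real points, and the conjugate pairing of the non-real preimages.

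The only genuine difference is the ordering of (2) and (3). The paper proves (2) first by simply choosing a \emph{global} regular value $y\in Y(\R)$ of $f(\R)$: there each local degree is $\pm 1$ and each multiplicity is $1$, so the congruence is immediate, and (3) for arbitrary $y$ then follows from (2) together with the decomposition $\deg(f)=\sum_{x\in f(\R)^{-1}(y)}\mult_x f+\sum_{x\notin X(\R)}\mult_x f$. You instead establish the pointwise congruence $\deg_{\nu,x}(f(\R))\equiv\mult_x f(\R)\pmod 2$ at every real preimage via a local perturbation of the target and additivity of the local degree, obtaining (3) directly for all $y$, and then deduce (2). Your route is more self-contained---it actually proves the local inequality and parity that the paper uses in (1) without justification---while the paper's route is shorter, since picking a regular value globally makes the local additivity step unnecessary.
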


\begin{proof} Choose a relative orientation $\nu:f(\R)^*(\oo_{Y(\R)})\to\oo_{X(\R)}$.\\  

(1) For the first inequality note that
$$|\deg|(f(\R))=|\sum_{x\in f(\R)^{-1}(y)}\deg_{\nu,x} (f(\R))|\leq \sum_{x\in f(\R)^{-1}(y)}\mult_x f(\R)=$$%
$$=\sum_{x\in f(\R)^{-1}(y)}\mult_x f \ .
$$

For the second inequality, note that  for any point $y\in Y(\R)$, we have 
\begin{equation}\label{degf} \deg(f)=\sum_{x\in f(\R)^{-1}(y)}\mult_x f+\sum_{x\in f^{-1}(y)\setminus  X(\R)}\mult_x f=$$
$$=\sum_{x\in f(\R)^{-1}(y)}\mult_x f(\R)+\sum_{x\in f^{-1}(y)\setminus  X(\R)}\mult_x f
\end{equation}
and all the terms on the right are positive.
\\ \\
(2) Choose a regular value $y\in Y(\R)$ of $f(\R)$. Then one has $\deg_{\nu,x} (f(\R))\in\{\pm 1\}$ for every $x\in f^{-1}(y)$ and
$$\deg(f(\R))=\sum_{x\in f(\R)^{-1}(y)} \deg_{\nu,x} (f(\R))\equiv \sum_{x\in f(\R)^{-1}(y)} \mult_x f(\R)\hbox{ (mod } 2)\ .
$$
It suffices to note that 
$$\mult_x f(\R)=\mult_x f\ ,\ \hbox{and}\ \sum_{x\in f^{-1}(y)\setminus  X(\R)}\mult_x f\equiv 0\hbox{ (mod 2 })\ .$$
(3) We use (2) and note that, by (\ref{degf}) one obviously has
$$\deg(f)\equiv \sum_{x\in f(\R)^{-1}(y)}\mult_x f(\R)\hbox{ (mod } 2)\ .
$$
\end{proof}

Note that the estimate (1) holds without any transversality assumption on $f$. 
A  statement similar to Proposition \ref{estimates} holds for Real sections in Real holomorphic vector bundles over compact complex manifolds endowed with Real structures \cite{OT2}.
\begin{pr} Let $X$ be compact complex manifold endowed with Real structure, and let $E\to X$ be a Real holomorphic vector bundle over $X$ with $\rk(E)=\dim(X)=n$. Suppose that the real vector bundle $E(\R)\to X(\R)$ is relatively orientable, and let $s$ be a Real holomorphic section of $E$ with finite zero locus $Z(s)$. Then
\begin{enumerate}
\item $$|\deg|(E(\R))\leq \sum_{z\in Z(s)\cap X(\R)}\mult_z(s)\leq\langle c_n(E),[X]\rangle\ .$$
\item $$|\deg|(E(\R))\equiv \langle c_n(E),[X]\rangle\   \hbox{ (mod 2)}\ .$$
\item $$|\deg|(E(\R))\equiv \sum_{z\in Z(s)\cap X(\R)}\mult_z(s)\ .
$$
\end{enumerate}
\end{pr}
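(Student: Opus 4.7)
The plan is to transcribe the proof of Proposition~\ref{estimates} with the map $f(\R)$ replaced by the Real section $s(\R)$ of the relatively oriented rank-$n$ real vector bundle $E(\R)\to X(\R)$. Fix a relative orientation $\nu$ of $E(\R)$ and let $\deg_\nu(E(\R))$ denote the resulting Euler number, so that $|\deg|(E(\R))=|\deg_\nu(E(\R))|$. The first ingredient is the section-theoretic analogue of Proposition~\ref{degformula}: for any section $\sigma$ of a relatively oriented rank-$n$ real vector bundle $F\to M$ over a closed connected $n$-manifold whose zero locus $Z(\sigma)$ is finite,
\begin{equation*}
\deg_\nu(F)=\sum_{z\in Z(\sigma)}\deg_{\nu,z}(\sigma),
\end{equation*}
where the local index $\deg_{\nu,z}(\sigma)$ is computed in any local trivialization of $F$ compatible with $\nu$. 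Its proof is identical to that of Proposition~\ref{degformula}, with relative excision applied near the zero section of $F$; this statement is also contained in \cite{OT2}. Applied to $\sigma=s(\R)$, whose zero locus is $Z(s)\cap X(\R)$, this formula expresses $\deg_\nu(E(\R))$ as the sum of the local indices $\deg_{\nu,z}(s(\R))$ over real zeros.

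The second ingredient is a local statement at each real isolated zero $z\in Z(s)\cap X(\R)$:
\begin{equation*}
|\deg_{\nu,z}(s(\R))|\leq \mult_z(s),\qquad \deg_{\nu,z}(s(\R))\equiv \mult_z(s)\ (\mathrm{mod}\ 2).
\end{equation*}
Both follow by choosing a Real holomorphic trivialization of $E$ near $z$ and perturbing $s$ Real-holomorphically and generically on a small neighborhood: the perturbed section has exactly $\mult_z(s)$ simple complex zeros, consisting of some number $r$ of real zeros and $(\mult_z(s)-r)/2$ complex conjugate pairs. The local real degree of the perturbed section equals the signed count of the real zeros, hence is bounded in absolute value by $r\leq \mult_z(s)$ and has the same parity as $\mult_z(s)$ because the non-real zeros pair off. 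Stability of the local index under small perturbations then transfers these facts back to $s$.

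Given these two ingredients, parts (1), (2), (3) follow exactly as in Proposition~\ref{estimates}. Summing the local inequality yields $|\deg|(E(\R))\leq \sum_{z\in Z(s)\cap X(\R)}\mult_z(s)$, and the second inequality of (1) comes from the global identity $\langle c_n(E),[X]\rangle=\sum_{z\in Z(s)}\mult_z(s)$ together with the non-negativity of the contributions from non-real zeros. Summing the local parity statement yields (3); combining (3) with the fact that complex conjugation pairs up non-real zeros of $s$ into pairs of equal multiplicity gives
\begin{equation*}
\sum_{z\in Z(s)\setminus X(\R)}\mult_z(s)\equiv 0\ (\mathrm{mod}\ 2),
\end{equation*}
whence (2) follows from (3) and the global identity.

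The main obstacle lies in the local statement at a real zero: comparing the real local degree of $s(\R)$ at $z$ with the complex intersection multiplicity requires a careful Real-holomorphic perturbation argument together with a study of how complex conjugation acts on the local zero set of a generic nearby section (equivalently, on the local Milnor fibration of $s$ at $z$). Once this is in hand, the remaining steps are a mechanical rewriting of the proof of Proposition~\ref{estimates}.
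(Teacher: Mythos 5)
The paper does not actually prove this Proposition: it is stated immediately after Proposition~\ref{estimates} with only the remark that ``a statement similar to Proposition~\ref{estimates} holds'' and a pointer to \cite{OT2}. Your proof is precisely the intended adaptation: the Poincar\'e--Hopf-type decomposition of the Euler number into local indices replaces the fibre-sum formula of Proposition~\ref{degformula}, and the local comparison between the real index $\deg_{\nu,z}(s(\R))$ and the complex multiplicity $\mult_z(s)$ (via a generic Real-holomorphic perturbation, or equivalently the Eisenbud--Levine--Khimshiashvili picture) plays the role that $|\deg_{\nu,x}(f(\R))|\leq\mult_x f(\R)=\mult_x f$ plays in the proof of Proposition~\ref{estimates}. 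So this is the same route, with no essential gap; the only point deserving a little more care in a final write-up is the local perturbation step, where one should note that a generic small \emph{Real-holomorphic} perturbation indeed has only simple zeros and that the local index is invariant under such perturbations, and that the complex Jacobian being invertible at a real zero forces the real Jacobian to be invertible there.
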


\section{Wall crossing for degrees of real projections}\label{Wall}

\subsection{Projections. The wall associated with a submanifold of $\P(V)$}

Let $V$ be an $N$-dimensional real vector space and let $\lambda_V$ be the tautological line bundle on the projective space $\P(V)$. By definition, $\lambda_V$ is a line subbundle of the trivial bundle $\underline{V}:=\P(V)\times V$ and the tangent bundle $T_{\P(V)}$ can be canonically identified with $\Hom(\lambda_V,\underline{V}/\lambda_V)=\lambda_V^\vee\otimes\underline{V}/\lambda_V$.

Let  $X\subset\P(V)$ be a compact submanifold of dimension $m<N-1$. We  denote by $T_X$ the tangent bundle of $X$ regarded as a subbundle of the restriction $\resto{T_{\P(V)}}{X}$, and by $\underline{V}_X$, $\lambda_{V,X}$ the restrictions of the bundles $\underline{V}$, $\lambda_V$ to $X$.  The tensor product $T_X\otimes \lambda_{V,X}$ is a subbundle of  the quotient bundle $ \underline{V}_X/\lambda_{V,X}$, so it can be written as $Y_X/\lambda_{V,X}$, where $Y_X\subset \underline{V}_X$ is  the subbundle of  $\underline{V}_X$ defined as  the preimage of $T_X\otimes \lambda_{V,X}$  under the epimorphism 
$$\underline{V}_X\ \twoheadrightarrow \  \qmod{\underline{V}_X}{\lambda_{V,X}}\  .$$

With this definition we obtain a canonical identification 
\begin{equation}\label{tangentX}
T_X=\Hom\big(\lambda_{V,X}, \qmod{ Y_X}{\lambda_{V,X}}\big)\ ,
\end{equation}
which will play an important role in the following constructions. Note that the bundle $Y_X$ can be identified with the dual jet bundle $\left[J^1( {\lambda_{V,X}^\vee})\right]^\vee$ of   $ {\lambda_{V,X}^\vee}$.\\

Let  now $W$ be a real vector space of dimension $m+1$.  A morphism $f\in\Hom(V,W)$ defines the central projection
$$[f]:\P(V)\setminus\P(\ker(f))\to\P(W)\ ,
$$
 whose restriction to $X\setminus \P(\ker(f))$ will be denoted by  $[f]_X$.  

The differential of a central  projection $[f]$ can be computed as follows: For a line $l\in\P(V)\setminus\P(\ker(f))$  we have an induced isomorphism 
$$r_{f,l}:l\to f(l)\ , \ 
$$ 
and an induced  linear map
$$q_{f,l}:V/l\to W/f(l)\ .
$$

Let $H'$ be a linear complement of the line $f(l)$ in $W$, and note that $H:=f^{-1}(H')$  is a linear complement of  $l$ in $V$.  For $\varphi\in\Hom(l,H)$, the $[f]$-image of the line $l_\varphi:=\{v+\varphi(v)|\ v\in l\}$ in $\P(W)$ is $\{f(v)+f(\varphi(v))|\ v\in l\}$, which is the graph of $f\circ\varphi\circ r_{f,l}^{-1}$.  This shows that via the identifications
$$T_l(\P(V))=\Hom(l,V/l)\ ,\ T_{f(l)}(\P(W))=\Hom(f(l),W/f(l))
$$
the differential $[f]_{*,l}$ at a point $l\in \P(V)\setminus\P(\ker(f))$ is given by 
\begin{equation}\label{old-remark}[f]_{*,l}(\varphi)=q_{f,l}\circ\varphi\circ r_{f,l}^{-1}\ .
\end{equation}

The differential  of the restriction $[f]_X$   is given by the same formula applied to $\varphi\in T_l(X)=\Hom(l,Y_l/l)$. More precisely: 
\begin{re}\label{diffX} The differential  of the restriction $[f]_X$ at $l\in X\setminus\P(\ker(f)) $  is given by the formula:
\begin{equation}\label{old-remark-new}([f]_{X})_{*,l}(\varphi)=p_{f,l}\circ\varphi\circ r_{f,l}^{-1}\ .
\end{equation}
where $p_{f,l}:Y_l/l\to W/ f(l)$ is   the morphism induced by $f$.
\end{re}

\begin{re} \label{localiso}   Let $l\in X\setminus\P(\ker(f))$. The following conditions are equivalent:
\begin{enumerate}
\item $[f]_X$ is a local diffeomorphism at $l$,
\item $f^{-1}(f(l))\cap Y_{l}=l$,
\item $\ker(f)\cap Y_{l}=\{0\}$.
\end{enumerate}
\end{re}
\begin{proof}

Using Remark  \ref{diffX} we see that $([f]_X)_{,*l}$ is an isomorphism if and only if the restriction of   
$$q_{f,l}:V/l\to W/f(l)$$
    to $Y_{l}/l$ is an isomorphism. Since the  kernel of this restriction is $\big[f^{-1}(f(l))\cap Y_{l}\big]/l$, we obtain the equivalence (1)$\Leftrightarrow $(2). On the
 other hand $f^{-1}(f(l))=l+\ker(f)$, hence $f^{-1}(f(l))\cap Y_l=(l+\ker(f))\cap Y_l$. The composition
$$\ker(f)\cap Y_l\hookrightarrow   (l+\ker( f))\cap Y_l\to \qmod{ (l+\ker( f))\cap Y_l}{l}
$$
is surjective because $l\subset Y_l$, and it  is injective since $l\cap \ker(f)=\{0\}$. Therefore one has $f^{-1}(f(l))\cap Y_l=l$ if and only if  $\ker(f)\cap Y_{l}=\{0\}$.
 
\end{proof}

In the special case $X\cap  \P(\ker(f))=\emptyset$ we obtain a well-defined smooth map $[f]_X:X\to\P(W)$ between compact manifolds of the same dimension. The main goal of this section is to study the relative orientability of such a map and, in the relatively orientable case,  to study the possible    degrees of $[f]_X$      for a fixed submanifold $X$.\\

Since the projection   $[f]_X$ is defined on all of $X$ if and only if $f$ belongs to the open subset
$$\Hom(V,W)_X:=\{f\in \Hom(V,W)|\ X\cap  \P(\ker(f))=\emptyset\}\ ,
$$
it is natural to study the complement of this open set, namely the {\it wall} associated with $X$.
\begin{dt} \label{wall-def} Let $X\subset \P(V)$ be a compact $m$-dimensional submanifold and  let $W$ be  a real vector space of dimension $n:=m+1$. The wall associated with $X$ is defined by
$${\cal W}_X:=\{f\in \Hom(V,W)|\ X\cap  \P(\ker(f))\ne\emptyset\}\ .
$$
A point $f\in {\cal W}_X$ is  called regular if the following conditions are satisfied:
\begin{enumerate}
\item $\dim(\ker(f))=N-n$ or, equivalently, $f$ is an epimorphism,
\item The intersection $X\cap \P(\ker( f))$ has only one point,   denoted by $\xi_f$, 
\item  $\ker(f)\cap Y_{\xi_f}=\xi_f$.
\end{enumerate}
Denote by $\mathcal{W}_X^0$ the subspace of regular points of the wall.

\end{dt}

The following proposition shows that the wall $\mathcal{W}_X\subset \Hom(V,W)$ is a smooth hypersurface at any regular point $f_0$ and identifies the normal line of  this hypersurface  at $f_0$ canonically with a line depending only on the triple $(X,W,\xi_{f_0})$.

\begin{pr} \label{N} Let $X\subset\P(V)$ be a smooth $m$-dimensional submanifold. Then:
\begin{enumerate}
\item    The wall $\mathcal{W}_X\subset \Hom(V,W)$ is a  smooth  hypersurface at any regular  point $f_0\in \mathcal{W}_X$.
\item Denoting $X\cap\P(\ker(f_0))=\{\xi_0\}$  we have a canonical isomorphism 
$$\psi_{f_0}:N_{\mathcal{W}_X^0,f_0}\to \Hom(\det (Y_{\xi_0}),\det(W))\ .$$
\end{enumerate} 
\end{pr}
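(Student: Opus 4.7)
The plan is to realize $\mathcal{W}_X$ near a regular point as the image under the second projection of a natural incidence variety in $X \times \Hom(V,W)$, and to prove that this projection is a local diffeomorphism at the chosen regular point. This will give smoothness of the wall in (1), and a direct computation of the resulting tangent space will yield the canonical isomorphism $\psi_{f_0}$ for (2) after formal bookkeeping with determinants of short exact sequences.

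First I would introduce the bundle $E$ over $X \times \Hom(V,W)$ with fiber $\Hom(\xi,W)$ at $(\xi,f)$, together with its tautological section $\sigma(\xi,f):=f|_\xi$. Its zero locus is $Z:=\{(\xi,f)\mid \xi\subset \ker f\}$, and the second projection $\pi:Z\to \Hom(V,W)$ has image $\mathcal{W}_X$. At a regular point $(\xi_0,f_0)$ the intrinsic differential $d\sigma_{(\xi_0,f_0)}:T_{\xi_0}X\oplus \Hom(V,W)\to \Hom(\xi_0,W)$ is well-defined since $\sigma(\xi_0,f_0)=0$. Its $\Hom(V,W)$-partial is the surjective restriction $g\mapsto g|_{\xi_0}$, which gives transversality of $\sigma$ to the zero section, hence smoothness of $Z$ at $(\xi_0,f_0)$ of codimension $\dim W$. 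Using the identification (\ref{tangentX}), the $X$-partial sends $v\in T_{\xi_0}X=\Hom(\xi_0,Y_{\xi_0}/\xi_0)$ to $f_0\circ \tilde v$ for any lift $\tilde v:\xi_0\to Y_{\xi_0}$ of $v$; this is well-defined modulo the lift because $f_0|_{\xi_0}=0$. Regularity condition (3) forces $\ker(f_0|_{Y_{\xi_0}})=\xi_0$, so this partial is injective and $(\pi|_Z)_{*,(\xi_0,f_0)}$ has trivial kernel; by dimension count it is an isomorphism. Together with compactness of $X$ and regularity (2), $\pi|_Z$ is a local diffeomorphism onto $\mathcal{W}_X$ near $f_0$, proving (1).

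For (2), reading off $T_{f_0}\mathcal{W}_X^0$ as $\pi_*(\ker d\sigma_{(\xi_0,f_0)})$ from the relation $g|_{\xi_0}+f_0\circ \tilde v=0$ yields the intrinsic description
$$T_{f_0}\mathcal{W}_X^0=\{g\in \Hom(V,W)\mid g(\xi_0)\subset f_0(Y_{\xi_0})\},$$
so restriction induces a canonical isomorphism $N_{\mathcal{W}_X^0,f_0}\stackrel{\sim}{\longrightarrow} \Hom(\xi_0,W/f_0(Y_{\xi_0}))$. To identify this target with $\Hom(\det Y_{\xi_0},\det W)$, I would apply the determinant functor to the short exact sequences $0\to \xi_0\to Y_{\xi_0}\to Y_{\xi_0}/\xi_0\to 0$ and $0\to f_0(Y_{\xi_0})\to W\to W/f_0(Y_{\xi_0})\to 0$, and combine them with the isomorphism $Y_{\xi_0}/\xi_0\stackrel{\sim}{\to} f_0(Y_{\xi_0})$ induced by $f_0$. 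The resulting canonical chain
$$\Hom(\det Y_{\xi_0},\det W)\cong \xi_0^\vee\otimes (W/f_0(Y_{\xi_0}))\cong \Hom(\xi_0,W/f_0(Y_{\xi_0})),$$
composed with the previous isomorphism, defines $\psi_{f_0}$.

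The main technical obstacle is the intrinsic computation of the $X$-partial of $\sigma$ in terms of the identification (\ref{tangentX}); making regularity condition (3) visibly responsible for the injectivity of this partial is what drives the whole argument and guarantees the canonical character of $\psi_{f_0}$. The remainder is routine manipulation of short exact sequences and determinants.
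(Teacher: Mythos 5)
Your argument is correct and arrives at the same key computations as the paper's, but through a lighter construction. The paper builds a three--factor incidence variety $\mathcal{J}_X\subset\Hom(V,W)\times X\times G_{N-n}(V)$ (requiring $\xi\subset K\subset\ker f$), exhibits it as a vector bundle to get a manifold structure, and then computes $\ker\bigl((q_X)_{*,(f_0,\xi_0,K_0)}\bigr)$ by hand from an explicit description of the tangent space. You instead work with the two--factor incidence $Z\subset X\times\Hom(V,W)$ and get the manifold structure for free from transversality of the tautological section $\sigma(\xi,f)=\resto{f}{\xi}$ --- transversality is immediate because the $\Hom(V,W)$-partial is already onto $\Hom(\xi_0,W)$. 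The rest matches: in both proofs regularity condition (3), i.e.\ $\ker(f_0)\cap Y_{\xi_0}=\xi_0$, is exactly what makes the $X$-direction contribute nothing to the kernel of the projection's differential, and in both proofs the tangent space comes out as $\{g\mid g(\xi_0)\subset f_0(Y_{\xi_0})\}$, yielding $N_{{\cal W}_X^0,f_0}\cong\Hom(\xi_0,W/f_0(Y_{\xi_0}))$ by restriction. The immersion $\Rightarrow$ local parametrization step needs the same properness $+$ single-fibre input from regularity (2), which you invoke; the paper spells it out a little more. Your version is arguably cleaner for this proposition in isolation, though the paper's blow-up of $\mathcal{W}_X$ over the Grassmannian factor is re-used later (for the analysis of the irregular locus $\mathcal{B}_X$), so the heavier machinery pays for itself there.

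One small caution on (2): you wave at the determinant step as ``routine'', but the paper is deliberate about \emph{which} of the two canonical splittings $\det(W)\cong\det(f_0(Y_{\xi_0}))\otimes\bigl(W/f_0(Y_{\xi_0})\bigr)$ versus $\det(W)\cong\bigl(W/f_0(Y_{\xi_0})\bigr)\otimes\det(f_0(Y_{\xi_0}))$ it uses (they differ by $(-1)^m$; see Remark~\ref{det-rem}). For the bare existence statement of Proposition~\ref{N} this is immaterial, but the particular convention is what makes the sign in Lemma~\ref{KH} and Theorem~\ref{wall-cross-th} come out uniformly, so if you intend your $\psi_{f_0}$ to feed into the wall-crossing formula you must pin that choice down exactly as the paper does.

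Also, ``by dimension count it is an isomorphism'' is loose: $(\pi|_Z)_{*}$ is injective with $(nN-1)$-dimensional source inside $nN$-dimensional $\Hom(V,W)$, so it is an isomorphism onto a hyperplane, i.e.\ an immersion, not an isomorphism in the usual sense; your intent is clear but the phrasing should be tightened.
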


\begin{proof}  

1. Consider the incidence varieties 
$$\mathcal{J}:=\{(f,\xi,K)\in \Hom(V,W)\times \P(V)\times G_{N-n}(V)|\ \xi\subset K\subset\ker (f)\}\ .
$$ 
$$\mathcal{J}_X:=\{(f,\xi,K)\in \Hom(V,W)\times X\times G_{N-n}(V)|\ \xi\subset K\subset\ker (f)\}\ .
$$
It's easy to see that $\mathcal{J}$ (respectively $\mathcal{J}_X$) is a vector bundle over a submanifold  of $\P(V)\times G_{N-n}(V)$ (respectively $X\times G_{N-n}(V)$), so it has a natural manifold structure.  Let 
   $q:\mathcal{J}\to \Hom(V,W)$,  $q_X:\mathcal{J}_X\to \Hom(V,W)$   be the projections  on the first factor, which are obviously proper smooth maps.

Note that
\begin{equation}
\mathcal{W}_X=q_X(\mathcal{J}_X)\ .
\end{equation}

One has a canonical identification
$$T_{(f,\xi,K)}(\mathcal{\mathcal{J}})=\{(\varphi,\alpha,\beta)\in\Hom(V,W)\times\Hom(\xi,V/\xi)\times\Hom(K,V/K)|$$
$$\ \resto{  \varphi}{K}=f\circ\beta,\ \resto{\beta}{\xi}=\bar\alpha\}\ ,
$$
where $\bar\alpha$ denotes the composition of $\alpha$ with the natural epimorphism $V/\xi\to V/K$.  Via this identification we have 
$q_{*,{(f,\xi,K)}}(\varphi,\alpha,\beta)=\varphi$, hence 
$$\ker(q_{*,{(f,\xi,K)}})=\{(\alpha,\beta)\in \Hom(\xi,V/\xi)\times\Hom(K,\ker(f)/K)|\  \resto{\beta}{\xi}=\bar\alpha\}\ .
$$
Similarly
$$\ker((q_X)_{*,{(f,\xi,K)}})=\{(\alpha,\beta)\in \Hom(\xi,Y_\xi/\xi)\times\Hom(K,\ker(f)/K)|\  \resto{\beta}{\xi}=\bar\alpha\}\ .
$$

Let now $f_0\in \mathcal{W}_X^0$ be a regular point on the wall, and put  $K_0:= \ker(f_0)$, $\xi_0:=\xi_{f_0}$. We will show that $q_X$ is an immersion at $(f_0,\xi_0,K_0)$. Indeed,  since we have $K_0=\ker (f_0)$, the kernel $\ker((q_X)_{*,{(f_0,\xi_0,K_0)}})$ can be identified with 
$$\{\alpha\in \Hom(\xi_0,Y_{\xi_0}/\xi_0)|\ \bar\alpha=0\}=\Hom(\xi_0,(K_0\cap Y_{\xi_0})/\xi_0)\ ,$$
which vanishes because $K_0\cap Y_{\xi_0}=\xi_0$. Therefore $q_X$ is an immersion at $(f_0,\xi_0,K_0)$ as claimed. Since the fibre  $q_X^{-1}(f_0)$ has  only one element and  $q_X$ is  proper, it follows that $q_X(U)$ is a neighborhood of $f_0$  for every neighborhood $U$ of $(f_0,\xi_0,K_0)$ in  $\mathcal{J}_X$. This implies that the image $\mathcal{W}_X=q_X(\mathcal{J}_X)$ is a submanifold of $\Hom(V,W)$ at $f_0$ whose germ at $f_0$ can be identified with the germ at  $(f_0,\xi_0,K_0)$ of $\mathcal{J}_X$. A simple dimension count shows that $\mathcal{W}_X$ is a hypersurface at $f_0$.
\\ \\
2. For the second statement of the proposition  we will construct a canonical isomorphism 
$$\psi_{f_0}:N_{\mathcal{W}_X^0,f_0}\to \Hom(\det (Y_{\xi_0}),\det(W))$$
 as the composition $u_{f_0}\circ a_{f_0}$ of two canonical isomorphisms:

$$a_{f_0}:N_{\mathcal{W}_X^0,f_0}\to \Hom\left(\xi_0,\qmod{W}{f_0(Y_{\xi_0})}\right)\  ,\ $$
$$u_{f_0}: \Hom\left(\xi_0,\qmod{W}{f_0(Y_{\xi_0})}\right)\to  \Hom(\det (Y_{\xi_0}),\det(W))\ .$$
We define first
$$A_{f_0}:\Hom(V,W)=T_{f_0}\Hom(V,W)\to  \Hom\left(\xi_0,\qmod{W}{f_0(Y_{\xi_0})}\right) 
$$
by
\begin{equation}
\label{alpha}A_{f_0}(\varphi):=\resto{\varphi}{\xi_0}\ \mathrm{mod}\    f_0(Y_{\xi_0})\ .
\end{equation}
It is easy to see that $A_{f_0}(\varphi)=0$ if and only if there exists  %
$$(\alpha,\beta)\in \Hom(\xi_0,Y_{\xi_0}/\xi_0)\times \Hom(K_0,V/K_0)$$
such that $(\varphi,\alpha,\beta)\in T_{(f_0,\xi_0,K_0)}(\mathcal{J}_X)$, i.e., if and only if $\varphi\in T_{f_0}({\cal W}_X^0)$.  Hence $A_{f_0}$ induces a canonical isomorphism $a_{f_0}:N_{\mathcal{W}_X^0,f_0}\to \Hom\left(\xi_0, {W}{/f_0(Y_{\xi_0})}\right)$ as claimed.\\

For the construction of $u_{f_0}$ we use  the exact sequences: 
$$0\to \xi_0\hookrightarrow   Y_{\xi_0}\to \qmod{ Y_{\xi_0}}{\xi_0}\to 0\ ,
$$

$$0\to f_0(Y_{\xi_0})\hookrightarrow W\to \qmod{W}{ f_0(Y_{\xi_0})}\to 0\ ,
$$
which give  standard isomorphisms
\begin{equation}\label{isos} \det (Y_{\xi_0})=\xi_0\otimes\det \big(\qmod{ Y_{\xi_0}}{\xi_0}\big)\ ,\ 
\end{equation}
\begin{equation}\label{standard}
\det(W)=\det(f_0(Y_{\xi_0}))\otimes \qmod{W}{ f_0(Y_{\xi_0})}\
\end{equation}

Note that one has a second canonical (but non-standard!)  isomorphism
\begin{equation}\label{isosNS}
\det(W)=\qmod{W}{ f_0(Y_{\xi_0})} \otimes\det(f_0(Y_{\xi_0})) 
\end{equation}
defined by  $[w]\otimes \delta\mapsto w\wedge\delta$ (see Remark \ref{det-rem} below), which is more convenient in our situation, because the maps we consider  here relate the factors  of the tensor products in (\ref{isos}), (\ref{isosNS}) respecting the order.

The isomorphism $u_{f_0}: \Hom\left(\xi_0,\qmod{W}{f_0(Y_{\xi_0})}\right)\to  \Hom(\det (Y_{\xi_0}),\det(W))$ is defined {\it via the isomorphisms (\ref{isos}) and  (\ref{isosNS})} by
\begin{equation}\label{u} u_{f_0}(\sigma)=\sigma\otimes \det(\bar f_0)\ ,
\end{equation}
where $\bar f_0:\qmod{ Y_{\xi_0}}{\xi_0}\to f_0(Y_{\xi_0})$ is the isomorphism induced by $f_0$.

Note that if one uses the standard isomorphism (\ref{standard}) for $\det(W)$  the corresponding formula for $u_{f_0}$ would be
$$u_{f_0}(\sigma)(v\otimes \delta):=(-1)^m(\sigma(v)\otimes[\det(\bar f_0)(\delta)]\ .
$$
 
\end{proof}

\begin{re} \label{det-rem} Let $C$ be  a finite dimensional real vector space. For every subspace  $A\subset C$ we define the isomorphisms
$$u_A:\det(A)\otimes\det(C/A)\textmap{\simeq} \det(C),\ v_A:\det(C/A)\otimes\det(A)\textmap{\simeq}\det(C)
$$
by 
$$u_A(\delta\otimes[c])=\delta\wedge c\ ,\ v_A([c]\otimes \delta)= c\wedge \delta\ .
$$
\begin{enumerate}
\item Let 
$$\pg:
\det(A)\otimes\det(C/A)\to \det(C/A)\otimes \det(A)$$
be the obvious isomorphism defined by  permutation of the factors, and put  $a:=\dim(A)$,  $c:=\dim(C)$. Then  one has 
$$v_A^{-1}\circ u_A=(-1)^{a(c-a)} \pg\ .$$
\item
Suppose $C$ has an internal direct sum decomposition  $C=A\oplus B$, and let  $\alpha:A\textmap{\simeq} C/B$, $\beta:B\textmap{\simeq} C/A$ be the obvious isomorphisms. Then one has
$$\det (\alpha)\otimes\det(\beta)^{-1}=v_B^{-1}\circ u_A \ .$$
In other words,  via the isomorphisms   $u_A$, $v_B$ the tensor product $\det (\alpha)\otimes\det(\beta)^{-1}$ induces $\id_{\det(C)}$.

\end{enumerate}
\end{re}

\subsection{Relative orientations of central projections}
Our next goal is  to describe explicitly the set of relative orientations of a map $[f]_X$ associated with a morphism $f\in\Hom(V,W)_X$.  We will obtain an important (and surprising) result: relative orientability of a map $[f]_X$ depends only on the embedding $X\subset \P(V)$,  and the set of of relative orientations of  $[f]_X$ can be canonically identified with a set  which is intrinsically associated with this embedding, and  is independent of the choice of $f\in\Hom(V,W)$. First  we give  a simple description of the line bundle $[f]^*(\det(T_{\P(W)}))$ on $\P(V)\setminus\P(\ker(f))$:
 \\

Let again $\lambda_V$ (respectively $\lambda_W$) be the tautological line bundle on $\P(V)$ (respectively $\P(W)$). The  family of isomorphisms 
$$(r_{f,l}:l\to f(l))_{l\in\P(V)\setminus\P(\ker(f)}$$
 defines a line bundle isomorphism 
$$r_f:\resto{\lambda_V}{\P(V)\setminus\P(\ker(f))} \textmap{\simeq} [f]^*(\lambda_W) \ .$$
Using the canonical isomorphism
$$\det(T_{\P(W)})=[\lambda_W^\vee]^{\otimes n}\otimes \det(W)\ ,
$$
we obtain a canonical isomorphism
\begin{equation}\label{canonicalIso}[f]^*(\det(T_{\P(W)}))
\textmap{{(r_f^{\vee})^{\otimes n}\otimes\id}} [\resto{\lambda_V^\vee}{\P(V)\setminus\P(\ker(f))}]^{\otimes n}\otimes \det(W) \ .  \end{equation}
We can prove now
\begin{lm}\label{pullBack} Let  $f\in\Hom(V,W)_X$ and  let  $[f]_X:X\to\P(W)$ be the projection induced by $f$. Then:
\begin{enumerate}
\item There is a canonical isomorphism
$$
 [f]_X^*(\det(T_{\P(W)}))= [ \lambda_{V,X}^\vee]^{\otimes n}\otimes \det(W)\ .
$$
\item The isomorphism class  of the line bundle $ [f]_X^*(\det(T_{\P(W)}))$ on $X$  is independent of $f\in\Hom(V,W)_X$.
\item The restriction $[f]_X:X\to\P(W)$ is relatively orientable if and only if $w_1(X)=n\big\{\resto{w_1(\lambda_V)}{X}\big\}$.
\item The data of an isomorphism 
$$\nu:[f]_X^*(\det(T_{\P(W)}))\to  \det(T_X)$$
is equivalent to the data of a line bundle isomorphism 
$$\mu:X\times\det(W)\textmap{\simeq} \det(Y_X) ,$$
hence to the data of a global trivialization   of   $\det(Y_X)$ with fibre $\det(W)$.
\end{enumerate}
\end{lm}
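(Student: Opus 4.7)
The plan is to handle parts (1)--(3) very quickly by direct application of what has already been set up, and to spend the main effort on part (4), which rests on a determinantal computation using the identification (\ref{tangentX}).

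For part (1), I would just restrict the canonical isomorphism (\ref{canonicalIso}) to $X$. Since $f\in\Hom(V,W)_X$ means $X\subset\P(V)\setminus\P(\ker(f))$, the restriction of $r_f$ to $X$ gives an isomorphism $\lambda_{V,X}\to[f]_X^*(\lambda_W)$, which combined with $\det(T_{\P(W)})=[\lambda_W^\vee]^{\otimes n}\otimes\det(W)$ yields the asserted canonical identification. Part (2) is then immediate, as the bundle $[\lambda_{V,X}^\vee]^{\otimes n}\otimes\det(W)$ makes no reference to $f$. For part (3), I would invoke Remark \ref{w}: relative orientability of $[f]_X$ is equivalent to $[f]_X^*(w_1(T_{\P(W)}))=w_1(T_X)$, i.e.\ $w_1\bigl([f]_X^*(\det T_{\P(W)})\bigr)=w_1(X)$; by part (1) and the triviality of $\det(W)$ over $X$ (together with $w_1(L^\vee)=w_1(L)$ for a line bundle $L$), the left side equals $n\cdot w_1(\lambda_{V,X})$, giving the criterion.

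The substance of the lemma is in part (4), where I would compute $\det(T_X)$ explicitly. Using (\ref{tangentX}) and the fact that $\lambda_{V,X}$ is a line bundle while $Y_X/\lambda_{V,X}$ has rank $m$, I obtain the canonical isomorphism
\[
\det(T_X)\;=\;\det\bigl(\Hom(\lambda_{V,X},Y_X/\lambda_{V,X})\bigr)\;=\;[\lambda_{V,X}^\vee]^{\otimes m}\otimes\det\bigl(Y_X/\lambda_{V,X}\bigr).
\]
The short exact sequence $0\to\lambda_{V,X}\to Y_X\to Y_X/\lambda_{V,X}\to 0$ gives the standard determinantal identity $\det(Y_X)=\lambda_{V,X}\otimes\det(Y_X/\lambda_{V,X})$, so $\det(Y_X/\lambda_{V,X})=\lambda_{V,X}^\vee\otimes\det(Y_X)$, and therefore
\[
\det(T_X)\;=\;[\lambda_{V,X}^\vee]^{\otimes n}\otimes\det(Y_X).
\]
Matching this with the formula of part (1), the datum of an isomorphism $\nu:[f]_X^*(\det T_{\P(W)})\to\det(T_X)$ becomes the datum of an isomorphism between $[\lambda_{V,X}^\vee]^{\otimes n}\otimes\det(W)$ and $[\lambda_{V,X}^\vee]^{\otimes n}\otimes\det(Y_X)$; cancelling the common tensor factor produces the desired bijective correspondence $\nu\leftrightarrow\mu$ with $\mu:X\times\det(W)\to\det(Y_X)$.

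The main thing to be careful about is that the correspondence $\nu\leftrightarrow\mu$ be genuinely canonical and independent of intermediate choices (most notably the order of factors in applying the determinantal identity to the exact sequence above). Here Remark \ref{det-rem} is exactly the bookkeeping device that pins down these identifications and controls the sign that would otherwise appear when comparing $u_A$ and $v_A$ on a determinantal tensor product. Beyond that, no new ingredients are required.
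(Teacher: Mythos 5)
Your proposal is correct and follows essentially the same route as the paper: restrict (\ref{canonicalIso}) to $X$ for (1), deduce (2) and (3) directly, and derive (4) by computing $\det(T_X)=[\lambda_{V,X}^\vee]^{\otimes n}\otimes\det(Y_X)$ from (\ref{tangentX}). You merely spell out the determinantal bookkeeping (the rank-$m$ twist and the short exact sequence for $Y_X$) that the paper leaves implicit.
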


\begin{proof} The first statement follows by  restricting the isomorphism (\ref{canonicalIso}) to $X$. The statements (2) and (3) are direct consequences of (1) whereas (4) follows from (1) and the canonical isomorphism
$$\det(T_X)= [ \lambda_{V,X}^\vee]^{\otimes n}\otimes
\det(Y_{X})$$
induced by (\ref{tangentX}).

 \end{proof}

This proves the following important
\begin{pr} \label{indep-f} Relative  orientability of a map $[f]_X:X\to\P(W)$ defined by  $f\in\Hom(V,W)_X$ depends only on the embedding $X\subset\P(V)$, and is independent of  $f$. The set of relative orientations of such a  map $[f]_X:X\to\P(W)$ can be canonically identified with the set of connected components of the space of line bundle isomorphisms 
$$\mu:X\times\det(W)\textmap{\simeq} \det(Y_X) ,$$
hence this set is independent of $f$.
\end{pr}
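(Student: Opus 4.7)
The plan is to derive this proposition as an essentially immediate consequence of Lemma \ref{pullBack}, whose parts (3) and (4) already contain the substantive content; what remains is to translate between the language of line bundle isomorphisms and the language of relative orientations of $\oo$-sheaves.

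First I would dispose of the independence of relative orientability from $f$. By Remark \ref{w}, a continuous map $g:M\to N$ between smooth manifolds is relatively orientable if and only if $g^*(w_1(T_N))=w_1(T_M)$. Applying this to $g=[f]_X$ and using Lemma \ref{pullBack}(3), the condition becomes $w_1(X)=n\,\resto{w_1(\lambda_V)}{X}$ in $H^1(X,\Z_2)$, whose right-hand side depends only on the embedding $X\subset \P(V)$ and the dimension $n$, with $f$ playing no role.

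For the second assertion I would argue as follows. On a smooth manifold, a relative orientation of a smooth map $g:M\to N$ between manifolds of the same dimension is the same datum as a connected component of the space $\Iso(g^*(\det T_N),\det T_M)$ of smooth line bundle isomorphisms: on each connected component of $M$ there are exactly two relative orientations and exactly two components in the space of isomorphisms, both distinguished by the induced sign of a local orientation at any chosen point. Applying this observation to $g=[f]_X$ and then invoking the canonical bijection of Lemma \ref{pullBack}(4), which sends an isomorphism $\nu:[f]_X^*(\det T_{\P(W)})\to\det T_X$ to an isomorphism $\mu:X\times\det(W)\to\det(Y_X)$ by cancelling the common tensor factor $[\lambda_{V,X}^\vee]^{\otimes n}$, I obtain a canonical bijection between relative orientations of $[f]_X$ and $\pi_0$ of the space of line bundle isomorphisms $\mu$. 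Since this target space is defined purely in terms of $X$, $V$, $W$ and the embedding $X\subset \P(V)$, its set of connected components is independent of $f$.

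The only point requiring a moment's care is that the $\nu\leftrightarrow\mu$ correspondence of Lemma \ref{pullBack}(4) is not merely a bijection on underlying sets but a homeomorphism of the natural topological spaces of isomorphisms, so that it descends to a bijection on $\pi_0$. Because the correspondence is implemented by tensoring with the identity on the fixed line bundle $[\lambda_{V,X}^\vee]^{\otimes n}$ via the canonical identifications already employed in the proof of that lemma, continuity in both directions is automatic, and no genuine obstacle arises.
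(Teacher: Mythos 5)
Your proposal is correct and follows essentially the same route as the paper: the paper presents Proposition \ref{indep-f} as an immediate consequence of Lemma \ref{pullBack} (parts (3) and (4)), which is exactly what you do, with the added (and welcome) explicitness about the translation between relative orientations as isomorphisms of orientation sheaves and $\pi_0$ of the space of line bundle isomorphisms $g^*(\det T_N)\to\det T_M$.
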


\begin{dt}\label{nu} A bundle isomorphism $\mu:X\times\det(W)\textmap{\simeq} \det(Y_X) $  will be called orientation parameter for projections $X\to\P(W)$. Given $f\in \Hom(V,W)_X$ we denote by 
$$\nu(f,\mu):=[(r_f^{\vee})^{\otimes n}\otimes \mu]$$
 the relative orientation of $[f]_X$ associated with  the  orientation parameter  $\mu$.
\end{dt}

Therefore for every   orientation parameter $\mu:X\times\det(W)\textmap{\simeq} \det(Y_X) $ we obtain a well defined map
$$\deg_\mu^X:\Hom(V,W)_X=\Hom(V,W)\setminus{\cal W}_X\to \Z\ ,
$$
which obviously descends to a map (denoted by the same symbol):
$$\deg_\mu^X:\pi_0(\Hom(V,W)\setminus{\cal W}_X)\to \Z\
$$

Inspired by  the terminology used in gauge theory we introduce the following 
\begin{dt}\label{chambers} 
The connected components of  $\Hom(V,W)\setminus{\cal W}_X$  will be called chambers. 
\end{dt} 

In the next section we will focus on  the following problem: how can one compare the values of the degree map on different chambers. The first step  will be   a wall-crossing formula which computes the jump of the degree map when one crosses the wall transversally at a regular point. The main ingredient  in proving this wall-crossing formula is the following  remark 
which states  that, for a given point $\xi_0\in X$, the set of isomorphisms  $\det(Y_{\xi_0})\to \det(W)$ has two (radically different) geometric interpretations:

\begin{re}  Fix $\xi_0\in X$. The orientation parameter $\mu$ defines an isomorphism $\mu_{\xi_0}^{-1}:\det(Y_{\xi_0})\to \det(W)$, which (by  Proposition \ref{N})   also defines a generator  of the normal line $N_{{\cal W}^0_X,f}$ for every  $f\in {\cal W}^0_X$ for which  $\xi_f=\xi_0$. 
\end{re}

This important remark will play a crucial role in the following section. \\

Fix now an orientation parameter  $\mu$,   and suppose that $[f]_X$ is well-defined and  a local isomorphism at $\xi_0\in X$. By Remark \ref{localiso} this means that $\ker(f)\cap Y_{\xi_0}=\{0\}$. We need an explicit formula for the local degree $\deg_{\nu(f,\mu),\xi_0}([f]_X)$. Using the exact sequences
$$ 0\to \xi_0\to Y_{\xi_0}\to Y_{\xi_0}/\xi_0\to  0\ ,\ 0\to f(\xi_0)\to W\to W/f(\xi_0)\to 0\ ,
$$
we get   canonical isomorphisms
\begin{equation}\label{newisos}
\det(Y_{\xi_0})=\xi_0\otimes \det(Y_{\xi_0}/\xi_0)\ ,
$$
$$\det(W)=f(\xi_0)\otimes \det (W/f_0(\xi_0))\ .
\end{equation}
By Remark \ref{diffX} the differential 
$$[f]_{*,\xi_0}:T_{\xi_0}(X)=\Hom(\xi_0,Y_{\xi_0}/\xi_0)\to \Hom(f(\xi_0),W/f(\xi_0))=T_{f{(\xi_0)}}\P(W)$$
 is given by
$$[f]_{*,\xi_0}(\varphi)=p_{f,\xi_0}\circ\varphi\circ r_{f,\xi_0}^{-1}\ ,$$
where $r_{f,\xi_0}:\xi_0\to f(\xi_0)$, $p_{f,\xi_0}:Y_{\xi_0}/\xi_0\to W/f(\xi_0)$ are the obvious isomorphisms induced by $f$. Therefore

\begin{lm} \label{KH} Let   $\mu$ be an orientation parameter,   and suppose that $[f]_X$ is well-defined and  a local isomorphism at $\xi_0\in X$. Via the isomorphisms (\ref{newisos}) the local degree $\deg_{\nu(f,\mu),\xi_0}([f]_X)$ is given by
\begin{equation}\label{KHeq} \deg_{\nu(f,\mu),\xi_0}([f]_X)=\mathrm{sign}(\mu_{\xi_0}\circ  (r_{f,\xi_0}\otimes \det(p_{f,\xi_0})))\ .
\end{equation}
\end{lm}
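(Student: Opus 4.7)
The plan is to reduce the local degree to a sign computation in determinant lines. For any local diffeomorphism $g$ at $x$ with relative orientation $\nu$, the local degree $\deg_{\nu,x}(g)$ equals the sign of the self-isomorphism $\nu_x \circ \det(g_{*,x})$ of $\det T_x M$, where $\nu_x$ is viewed as an isomorphism $\det T_{g(x)}N \to \det T_x M$ via the canonical identification of the orientation sheaf with the determinant of the tangent space (up to positive scalars). Applied to $g=[f]_X$ at $\xi_0$, and using the canonical identifications $\det T_X = (\lambda_{V,X}^\vee)^{\otimes n} \otimes \det(Y_X)$ and $\det T_{\P(W)} = (\lambda_W^\vee)^{\otimes n} \otimes \det(W)$ from Lemma \ref{pullBack}, the relative orientation $\nu(f,\mu)_{\xi_0}$ becomes simply $(r_{f,\xi_0}^\vee)^{\otimes n} \otimes \mu_{\xi_0}$ by Definition \ref{nu}.

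It then remains to expand $\det[f]_{X,*,\xi_0}$ in the same coordinates. By Remark \ref{diffX}, the differential is the tensor $(r_{f,\xi_0}^{-1})^\vee \otimes p_{f,\xi_0}$ under $\Hom(A,B) = A^\vee \otimes B$; its determinant is therefore $((r_{f,\xi_0}^{-1})^\vee)^{\otimes m} \otimes \det(p_{f,\xi_0})$, viewed as a map $(\xi_0^\vee)^{\otimes m} \otimes \det(Y_{\xi_0}/\xi_0) \to (f(\xi_0)^\vee)^{\otimes m} \otimes \det(W/f(\xi_0))$. To translate this to the form $(\xi_0^\vee)^{\otimes n} \otimes \det(Y_{\xi_0}) \to (f(\xi_0)^\vee)^{\otimes n} \otimes \det(W)$, one uses the short-exact-sequence identifications (\ref{newisos}) on both sides, together with the fact that by functoriality of $\det$ on short exact sequences the isomorphism $\det(f|_{Y_{\xi_0}}): \det(Y_{\xi_0}) \to \det(W)$ corresponds to $r_{f,\xi_0} \otimes \det(p_{f,\xi_0})$ via (\ref{newisos}). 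A short basis computation then yields $\det[f]_{X,*,\xi_0} = ((r_{f,\xi_0}^{-1})^\vee)^{\otimes n} \otimes \det(f|_{Y_{\xi_0}})$.

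Composing with $\nu(f,\mu)_{\xi_0} = (r_{f,\xi_0}^\vee)^{\otimes n} \otimes \mu_{\xi_0}$, the tensor factors $(r_{f,\xi_0}^\vee)^{\otimes n}$ and $((r_{f,\xi_0}^{-1})^\vee)^{\otimes n}$ compose to the identity of $(\xi_0^\vee)^{\otimes n}$, leaving the self-isomorphism $\mu_{\xi_0} \circ (r_{f,\xi_0} \otimes \det(p_{f,\xi_0}))$ of $\det(Y_{\xi_0})$; taking its sign is exactly (\ref{KHeq}). The main obstacle will be the careful bookkeeping of the SES-isomorphisms of determinant lines: as Remark \ref{det-rem} warns, the ``wedge-first'' and ``wedge-last'' identifications of $\det(C)$ with $\det(A) \otimes \det(C/A)$ differ by the sign $(-1)^{a(c-a)}$. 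One therefore has to check that the conventions chosen in (\ref{newisos}), in the derivation of $\det T_X = (\lambda_{V,X}^\vee)^{\otimes n} \otimes \det(Y_X)$ from (\ref{tangentX}), and in the passage from the ``rank-$m$'' to the ``rank-$n$'' forms of source and target all line up consistently, so that no stray sign survives in (\ref{KHeq}).
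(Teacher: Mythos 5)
Your argument is correct, and it is essentially the same as the paper's (largely implicit) verification: the paper states Lemma~\ref{KH} right after Remark~\ref{diffX} and the isomorphisms (\ref{newisos}), treating the formula as immediate. Your proposal simply makes explicit the bookkeeping of determinant lines — expanding $\nu(f,\mu)_{\xi_0}$ via Definition~\ref{nu}, taking $\det$ of the rank-one-tensor differential, cancelling the $(r_{f,\xi_0}^\vee)^{\otimes n}$ factors, and invoking $\det$-functoriality on short exact sequences — which is precisely the computation the paper suppresses; the sign subtleties of Remark~\ref{det-rem} do not in fact intervene here because both sides of (\ref{newisos}) use the same (wedge-first) convention.
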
 

We will also need the following simple remark concerning the functoriality of the degree map with respect to isomorphisms $\Phi:W\to W'$.
\begin{re} \label{Phi} Let $\mu:X\times\det(W)\to \det(Y_X)$ be an  orientation parameter, $f\in\Hom(V,W)_X$ and $\Phi:W\to W'$ a   vector space isomorphism. Then  
$$\deg^X_{\nu(f,\mu\circ \det(\Phi)^{-1})}([\Phi\circ f]_X)=\deg^X_{\nu(f,\mu)}([ f]_X)\ .
$$
In particular, if $W=W'$ one has:
$$\deg^X_{\nu(f,\mu)}([\Phi\circ f]_X)=\mathrm{sign}(\det(\Phi))\deg^X_{\nu(f,\mu)}([ f]_X)\ .
$$
\end{re}
\vspace{3mm}
We end this section with an example.

{\ }\\
{\bf Example:} ({\it Veronese maps}) Let $W$ be a real vector space of  dimension $n\geq 2$, and let $g:\P(W)\to \P(W)$  be a regular real algebraic map. Such a map factorizes as $g=[f]\circ v_d$, where %
$$v_d:\P(W)\to \P(S^d W  )$$ 
is the Veronese map of degree $d$, and $f\in\Hom(S^d(W),W)$   with $\P(\ker(f))\cap\im(v_d)=\emptyset$.  The positive integer $d$ is determined by $g$  and will be called the algebraic degree of $g$.
Applying Lemma \ref{pullBack} to the image $X:=v_d(\P(W))$ and noting that $v_d^*(\lambda_{S^d(W)})= \lambda_W^{\otimes d}$, one obtains:
$$\det(T_{\P(W)})^\vee\otimes g^*(\det(T_{\P(W)}))=\lambda_W^{\otimes[n(1-d)]}
$$
This  proves  the following simple, but interesting, result:
\begin{pr} A regular real algebraic map $g:\P(W)\to \P(W)$ of algebraic degree $d$ is relatively orientable if and only if $\dim(W)(1-d)$ is even, and in this case it is canonically relatively oriented.
\end{pr}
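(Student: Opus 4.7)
The plan is to reduce both assertions to the identity
$$\det(T_{\P(W)})^\vee\otimes g^*(\det(T_{\P(W)}))=\lambda_W^{\otimes n(1-d)}$$
which has just been established in the paragraph preceding the statement via Lemma \ref{pullBack}. By Remark \ref{w}, the map $g$ is relatively orientable if and only if $g^*w_1(T_{\P(W)})=w_1(T_{\P(W)})$, equivalently the real line bundle $g^*(\det T_{\P(W)})\otimes \det T_{\P(W)}^\vee$ has vanishing first Stiefel--Whitney class. Since this line bundle is isomorphic to $\lambda_W^{\otimes n(1-d)}$, its first Stiefel--Whitney class equals $n(1-d)\, w_1(\lambda_W)$, and the standard fact that $w_1(\lambda_W)$ generates $H^1(\P(W),\Z_2)\cong \Z_2$ shows that this class vanishes precisely when $n(1-d)$ is even. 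This settles the equivalence.

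For the canonical relative orientation in the even case $n(1-d)=2m$, I would invoke the elementary observation that for any real line bundle $L\to M$ the square $L^{\otimes 2}$ carries a canonical orientation: a nowhere-vanishing local section $s$ of $L$ produces $s\otimes s$, and a change $s\mapsto \varphi s$ by a nowhere-vanishing $\varphi$ rescales $s\otimes s$ by $\varphi^2>0$. Hence $\lambda_W^{\otimes 2m}=(\lambda_W^{\otimes 2})^{\otimes m}$ is canonically oriented, and via the identification above this transports to a canonical orientation of $g^*(\det T_{\P(W)})\otimes \det T_{\P(W)}^\vee$, hence to a canonical isomorphism of orientation sheaves $g^*(\oo_{\P(W)})\to \oo_{\P(W)}$.

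The only non-routine point is to verify that the isomorphism $g^*(\det T_{\P(W)})\otimes \det T_{\P(W)}^\vee \cong \lambda_W^{\otimes n(1-d)}$ is itself canonical, so that it legitimately transports the canonical orientation constructed above. This is indeed the case: it is the composition of the Euler-sequence isomorphism $\det T_{\P(W)}=\lambda_W^{\otimes(-n)}\otimes \det(W)$, its $g$-pullback, and the canonical identification $v_d^*(\lambda_{S^d W})=\lambda_W^{\otimes d}$, with the $\det(W)$-factors cancelling canonically. This is the step that requires some care, but no genuine obstacle arises.
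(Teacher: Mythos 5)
Your argument is correct and takes exactly the route the paper intends: the proposition is an immediate consequence of the displayed identity $\det(T_{\P(W)})^\vee\otimes g^*(\det(T_{\P(W)}))=\lambda_W^{\otimes n(1-d)}$ together with the facts that $w_1(\lambda_W)$ generates $H^1(\P(W),\Z_2)\cong\Z_2$ and that an even tensor power of a real line bundle is canonically oriented. The paper leaves these last steps unstated (the Proposition is introduced with ``This proves the following simple, but interesting, result''), and your write-up supplies precisely those details, including the worthwhile check that the isomorphism to $\lambda_W^{\otimes n(1-d)}$ is itself canonical because the $\det(W)$-factors cancel.
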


In the case when   $\dim(W)(1-d)$ is even, it is an interesting problem to determine the possible degrees of such regular real algebraic maps with respect to this canonical relative orientation. The case $\dim(W)=2$ is known  by the work of Brockett   (see \cite{Se} p. 41  or  \cite{By} Theorem 2.1) and will be described in detail at the end of section \ref{wall-cross-section}.

\vspace{3mm} 

\subsection{Wall crossing jump} \label{wall-cross-section}

 Suppose now that the  condition in the third statement of  Lemma \ref{pullBack} is satisfied, and fix  an isomorphism 
$$\mu:X\times\det(W)\to \det(Y_X) .$$
We are interested in the map $\deg_{\mu}^X:\pi_0(\Hom(V,W)_X)\to\Z$ defined by
$$\deg_\mu^X(\hat f):=\deg_{\nu(f,\mu)}([f]_X)\ .
$$
Note that $[f]$ can be written as the composition
$$X\hookrightarrow \P(V)\setminus\P(\ker(f))\to \P(\im(f))\hookrightarrow\P(W)\ ,$$
where the central map is   the projection of $\P(W)$ onto $\P(\im(f))$ with center $\P(\ker(f))$. In other words, we are interested in the variation of the degree of  central  projections   $ \P(V)\supset X\to \P(W)$  when the projection varies. The first step is to determine the variation of 
$\deg_{\nu(f,\mu)}([f]_X)$ as $f$ varies on a curve which crosses the wall transversally at a regular point.

\begin{thry} \label{wall-cross-th} (Wall-crossing formula) Let  $f_0\in \mathcal{W}_X^0$  be a regular  point on the wall, put $\xi_0:=\xi_{f_0}$,  $N_0:= N_{{\cal W}_X^0,f_0}$, and let   
$$\fg=(f_\tau)_{\tau\in N_{0}}:N_{0}\to\Hom(V,W)$$
be   a smooth map such that the differential $\fg_{*,0}$ is a right splitting of the exact sequence
$$0\to  T_{\mathcal{W}_X^0,f_0}\to  T_{\Hom(V,W),f_0}=\Hom(V,W)\to N_{0}\to 0\ .$$
Then for every sufficiently small $\tau\in N_{0}\setminus\{0\}$ we have: 
\begin{enumerate}
\item $f_\tau\in\Hom(V,W)_X$ and  $[f_\tau]_X$ is a local diffeomorphism at $\xi_0$,
\item  $\deg_{\nu(f_\tau,\mu),\xi_0}([f_\tau]_X)=\mathrm{sign}(\mu_{\xi_0}\circ(\psi_{f_0}(\tau)))$, 
\item $\deg_{\nu(f_\tau,\mu)}([f_\tau]_X)-\deg_{\nu(f_{-\tau},\mu)}([f_{-\tau}]_X)=2\mathrm{sign}(\mu_{\xi_0}\circ(\psi_{f_0}(\tau)))$.
\end{enumerate}
 \end{thry}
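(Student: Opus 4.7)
For statement (1), the plan is to run the first-order analysis already present in the proof of Proposition \ref{N}. Writing $\varphi_\tau := \fg_{*,0}(\tau)$ so that $f_\tau = f_0 + \varphi_\tau + O(|\tau|^2)$, the splitting hypothesis forces $\varphi_\tau \notin T_{f_0}\mathcal{W}_X^0$ for $\tau \neq 0$, i.e.\ $A_{f_0}(\varphi_\tau) \neq 0$; equivalently, $\varphi_\tau(\xi_0) \not\subset f_0(Y_{\xi_0})$. Since the germ of $\mathcal{W}_X$ at $f_0$ is a smooth hypersurface (Proposition \ref{N}) and $\tau \mapsto f_\tau$ is transverse to it, we obtain $f_\tau \in \Hom(V,W) \setminus \mathcal{W}_X$ for every sufficiently small $\tau \neq 0$. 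For the local diffeomorphism property at $\xi_0$, by Remark \ref{localiso} it suffices to check $\ker(f_\tau) \cap Y_{\xi_0} = 0$: a would-be kernel element $v = v_0 + O(\tau)$ with $v_0 \in \xi_0$ and higher-order corrections in $Y_{\xi_0}$ would, at first order, force $\varphi_\tau(v_0) \in f_0(Y_{\xi_0})$, contradicting $A_{f_0}(\varphi_\tau) \neq 0$.

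For statement (2), I would reduce to a leading-order computation using Lemma \ref{KH}. The lemma gives $\deg_{\nu(f_\tau,\mu),\xi_0}([f_\tau]_X) = \mathrm{sign}(\mu_{\xi_0} \circ \gamma_\tau)$ with $\gamma_\tau := r_{f_\tau,\xi_0} \otimes \det(p_{f_\tau,\xi_0})$, and multiplicativity of determinants in short exact sequences identifies $\gamma_\tau$ with $\det(f_\tau|_{Y_{\xi_0}}): \det(Y_{\xi_0}) \to \det(W)$ via (\ref{newisos}). Picking $v_0 \in \xi_0 \setminus \{0\}$ and lifts $e_1,\ldots,e_m \in Y_{\xi_0}$ of a basis of $Y_{\xi_0}/\xi_0$, and expanding $f_\tau = f_0 + \varphi_\tau + O(|\tau|^2)$ together with $f_0(v_0) = 0$, one obtains
\[
\det(f_\tau|_{Y_{\xi_0}})(v_0 \wedge e_1 \wedge \cdots \wedge e_m) = \varphi_\tau(v_0) \wedge f_0(e_1) \wedge \cdots \wedge f_0(e_m) + O(|\tau|^2).
\]
Unwinding $\psi_{f_0}(\tau) = u_{f_0}(a_{f_0}(\tau))$ and using the \emph{non-standard} decomposition (\ref{isosNS}) of $\det(W)$, the right-hand side of the display is exactly $\psi_{f_0}(\tau)$ applied to $v_0 \wedge e_1 \wedge \cdots \wedge e_m$. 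Hence $\gamma_\tau = \psi_{f_0}(\tau) + O(|\tau|^2)$ as isomorphisms $\det(Y_{\xi_0}) \to \det(W)$, and their signs agree for all sufficiently small $\tau \neq 0$.

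For statement (3), the plan is to localize the global degree via Proposition \ref{degformula}. Since $[f_\tau]_X(\xi_0) = [\varphi_\tau(v_0) + O(|\tau|^2)]$ tends to $y_* := [\varphi_\tau(v_0)] \in \P(W)$ as $\tau \to 0^\pm$ (the $\tau$-scaling cancels projectively), I would pick, by Sard's theorem, a regular value $y_0$ of the smooth map $[f_0]_X: X \setminus \{\xi_0\} \to \P(W)$ lying arbitrarily close to $y_*$. For $|\tau|$ small enough,
\[
[f_{\pm\tau}]_X^{-1}(y_0) = \{x_1^{\pm\tau}, \ldots, x_k^{\pm\tau}\} \sqcup \{\widetilde{\xi}^{\pm\tau}\},
\]
where the $x_i^{\pm\tau}$ are the implicit-function-theorem deformations of the spectator preimages $x_i \in [f_0]_X^{-1}(y_0) \subset X\setminus\{\xi_0\}$, and $\widetilde{\xi}^{\pm\tau}$ is the unique preimage of $y_0$ inside a fixed neighborhood of $\xi_0$ on which $[f_{\pm\tau}]_X$ is a diffeomorphism (uniform in $\tau$ by compactness). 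The spectator local degrees are locally constant in $\tau$ and cancel in the difference. By local constancy of local degree under a local diffeomorphism together with statement (2), the bifurcating contribution equals $\mathrm{sign}(\mu_{\xi_0}\circ\psi_{f_0}(\tau)) - \mathrm{sign}(\mu_{\xi_0}\circ\psi_{f_0}(-\tau)) = 2\,\mathrm{sign}(\mu_{\xi_0}\circ\psi_{f_0}(\tau))$ by linearity of $\psi_{f_0}$.

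The hard part will be the leading-order identification $\gamma_\tau = \psi_{f_0}(\tau) + O(|\tau|^2)$ in statement (2): one must carefully track the sign conventions implicit in the non-standard decomposition (\ref{isosNS}) of $\det(W)$ (used inside $u_{f_0}$) against the standard decompositions (\ref{newisos}) built into Lemma \ref{KH}. Once this algebraic identification is in hand, the analytic content of (1) and (3) reduces to routine implicit-function-theorem and compactness work.
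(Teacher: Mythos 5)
Your proposal is correct in outline and reaches the same conclusions, but it departs from the paper's route in parts (2) and (3), and in part (3) there is a point you gloss over that the paper's construction is designed to avoid.

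For (1) you are essentially following the paper: transversality gives $f_\tau\notin\mathcal{W}_X$, and the local diffeomorphism property at $\xi_0$ comes from the non-vanishing of $a_{f_0}(\tau)$. The paper makes the kernel-perturbation argument rigorous by exhibiting $\{s\in\Hom(Y_{\xi_0},W):\ker s\neq 0\}$ as a smooth hypersurface at $f_0|_{Y_{\xi_0}}$ and checking that $\tau\mapsto f_\tau|_{Y_{\xi_0}}$ is transversal to it; your ``$v=v_0+O(\tau)$'' argument is morally the same but implicitly uses upper semi-continuity of $\dim\ker$ without saying so.

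For (2) your route is genuinely shorter and cleaner. The paper proves the affine case via the commutative diagram (\ref{diagram}), a continuity-of-determinant limiting step, and Remark \ref{det-rem}, and then passes to general $\fg$ by a Fr\'echet-space connectedness argument. Your direct observation that $r_{f_\tau,\xi_0}\otimes\det(p_{f_\tau,\xi_0})=\det(f_\tau|_{Y_{\xi_0}})$ via (\ref{newisos}), followed by the first-order expansion $f_\tau(v_0)\wedge f_\tau(e_1)\wedge\cdots\wedge f_\tau(e_m)=\varphi_\tau(v_0)\wedge f_0(e_1)\wedge\cdots\wedge f_0(e_m)+O(|\tau|^2)$, works for general smooth $\fg$ at once and bypasses both the affine reduction and the Fr\'echet argument; the identification with $\psi_{f_0}(\tau)$ through (\ref{isos}) and the non-standard (\ref{isosNS}) is exactly right. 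This is a real simplification.

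For (3) your strategy (Sard near $y_*=[\varphi_\tau(v_0)]$, then spectator-plus-bifurcating-point decomposition) differs from the paper's, which explicitly constructs an affine $\fg$ with property {\bf P}: a $\zeta_0$ whose preimage $L_0=f_\tau^{-1}(\zeta_0)$ is literally independent of $\tau$ and meets $X$ transversally, so $[f_\tau]_X^{-1}(\zeta_0)$ is a $\tau$-independent finite set and Proposition \ref{degformula} applies without further work. Your variant requires two compactness points you do not address. First, the fibre $[f_0]_X^{-1}(y_0)\subset X\setminus\{\xi_0\}$ could in principle accumulate at $\xi_0$; it does not, because the limit set of $[f_0]_X(\xi)$ as $\xi\to\xi_0$ lies in $\P(f_0(Y_{\xi_0}))$ (this follows from $\ker(f_0)\cap Y_{\xi_0}=\xi_0$), and $y_*=[\varphi_\tau(v_0)]\notin\P(f_0(Y_{\xi_0}))$ precisely because $a_{f_0}(\tau)\neq 0$; so choosing $y_0$ in a neighbourhood of $y_*$ disjoint from $\P(f_0(Y_{\xi_0}))$ keeps the fibre away from $\xi_0$ and hence finite. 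Second, you need to rule out additional preimages of $y_0$ under $[f_\tau]_X$ that are neither near a spectator $x_i$ nor in the fixed neighbourhood $U$ of $\xi_0$; this follows from the same bound away from $\xi_0$ together with compactness of $X\setminus(U\cup\bigcup_i B(x_i))$ and continuity of $(\xi,\tau)\mapsto[f_\tau]_X(\xi)$ there, but it must be said. The paper's property-{\bf P} construction trades these compactness estimates for an explicit matrix choice, and also supplies a transversal $\P(L_0)$ so that $\zeta_0$ is automatically a regular value for all small $\tau$. Also note that the paper first reduces (3) to a single special $\fg$ by observing that for small $\tau$ the points $\fg(\tau)$ for different admissible $\fg$ lie in the same chamber; your argument works directly with a general $\fg$ and does not need that reduction, which is a mild gain once the compactness points are filled in.
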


\begin{proof}

 (1) The fact that $f_\tau\in\Hom(V,W)_X$  for sufficiently small $\tau\in N_{0}\setminus\{0\}$ follows directly from Proposition \ref{N} taking into account that  the complement ${\cal W}_X=\Hom(V,W)\setminus \Hom(V,W)_X$ is a smooth hypersurface at $f_0$ and the curve $(f_\tau)_{\tau\in N_{0}}$ is transversal to this hypersurface at $f_0$. In order to prove that $[f_\tau]_X$ is a local diffeomorphism at $\xi_0$ we use Remark \ref{localiso}.  We have to show that $\ker(f_\tau)\cap Y_{\xi_0}=\{0\}$.   

Note that, in general, for two finite dimensional real vector spaces  $A$, $B$   of the same dimension, the closed subset
$${\cal W}(A,B):=\{s\in \Hom(A,B)|\ \ker(s)\ne\{0\}\}
$$ 
of $\Hom(A,B)$ is a  smooth hypersurface at any point $s_0$ with $\dim(\ker(s_0))=1$, and the tangent space at such a point is
$$T_{s_0}{\cal W}(A,B)=\{\sigma\in \Hom(A,B)|\ \sigma(\ker(s_0))\subset s_0(A)\}\ .
$$
Therefore  a tangent vector  $\sigma \in  T_{s_0}(\Hom(A,B))$ is transversal to $ {\cal W}(A,B)$ at $s_0$ if and only if the linear map $\ker(s_0)\to B/s_0(A))$ induced by $\resto{\sigma}{\ker(s_0)}$ is an isomorphism.

Using this remark we see that the map $\tilde\fg=(\tilde f_\tau)_\tau:N_0\to \Hom(Y_{\xi_0},W)$ given by $\tilde f_\tau=\resto{f_\tau}{Y_{\xi_0}}$ is transversal to ${\cal W}(Y_{\xi_0},W)$ at $f_0$. Indeed, using the notations introduced in the proof of Proposition \ref{N}  the map 
$$\ker \tilde  f_0=\xi_0\longrightarrow  W/f_0(Y_{\xi_0}) $$
 induced by $\tilde\fg_{*,0}(\tau)$ is precisely $A_{f_0}(\fg_{*,0}(\tau))$  by  the definition of $A_{f_0}$. On the other hand, since $\fg_{*,0} $ is a right inverse of the canonical projection $\Hom(V,W)\to N_{0}$, we see that $A_{f_0}(\fg_{*,0}(\tau))=a_{f_0}(\tau)$, which is nonzero for $\tau\in N_{0}\setminus\{0\}$ because $a_{f_0}$ is an isomorphism by Proposition \ref{N}.
\\ \\
(2)  
We  suppose  first that $\fg$ is an affine map,  so it has the form
\begin{equation}\label{affine}
f_\tau=f_0+\phi(\tau)\ ,
\end{equation}
where $\phi:N_0\to \Hom(V,W)$ is a linear map (which coincides with the differential $\fg_{*,0}$).  
As we have seen in the proof of (1) our assumption about the differential $\fg_{*,0}$ implies that $\phi(\tau)$ is a lift of $\tau$, so that   $a_{f_0}(\tau)=A_{f_0}(\phi(\tau))$. Therefore  for  $\tau\ne 0$ the morphism 
$$a_{f_0}(\tau)=A_{f_0}(\phi(\tau))=\resto{\bar\phi(\tau)}{\xi_0}:\xi_0\to W/f_0(Y_{\xi_0}) $$
 induced by $\phi(\tau)$ is an isomorphism. For every 
$\tau\in N_0\setminus\{0\}$ we obtain a direct sum decomposition
\begin{equation}\label{direct} W=\phi(\tau)(\xi_0)\oplus f_0(Y_{\xi_0})\ ,
\end{equation}
which is  independent  of $\tau$, since $N_0$ is 1-dimensional.
Now fix $\tau\in N_0\setminus\{0\}$. We have an obvious  commutative diagram with exact rows
\begin{equation}\label{diagram}
\begin{array}{c}
\unitlength=1mm
\begin{picture}(100,30)(-37,-19)
\put(-40,4){0}
\put(-34,5){\vector(2,0){12}}
\put(-18,4){$\xi_0$}
\put(-11,5){\vector(2,0){12}}
\put(4,4){$Y_{\xi_0}$}
\put(11,5){\vector(2,0){8}}
\put(23,4){$\qmod{Y_{\xi_0}}{\xi_0} $}
\put(37,5){\vector(2,0){15}}
\put(56,4){0}
\put(-16,0){\vector(0, -3){9}}
\put(6,0){\vector(0, -3){9}}
\put(27,0){\vector(0, -3){9}}

\put(-15,-5){$r_{\phi(\tau),\xi_0}$}
\put(7,-5){$\resto{f_{\tau}}{Y_{\xi_0}}$}
\put(28,-5){$p_{0}+\phi(\tau)_0$}

\put(-40,-16){0}
\put(-34,-15){\vector(2,0){12}}
\put(-18,-16){$\xi_0$}
\put(-11,-15){\vector(2,0){12}}
\put(4,-16){$W$}
\put(11,-15){\vector(2,0){8}}
\put(22,-16){$\qmod{W}{\phi(\tau)(\xi_0)} $}
\put(42,-15){\vector(2,0){10}}
\put(56,-16){0}

\put(60,-16){,}
\end{picture} 
\end{array} 
\end{equation}
where 
$$p_0:{Y_{\xi_0}}/{\xi_0}\to \qmod{W}{\phi(\tau)(\xi_0)}\ ,\ \phi(\tau)_0:{Y_{\xi_0}}/{\xi_0}\to \qmod{W}{\phi(\tau)(\xi_0)}$$
 are the linear maps induced by  $f_0$, and  $\phi(\tau)$  respectively.   Note that 
$p_0$ is an isomorphism, because it can be written as a composition  of isomorphisms:
$${Y_{\xi_0}}/{\xi_0}\to f_0(Y_{\xi_0})\to   {W}/{\phi(\tau)(\xi_0)}\ .$$

The diagram (\ref{diagram}) shows that
\begin{enumerate}[(a)]
\item  \label{b} $r_{f_{\tau},\xi_0}=r_{\phi(\tau),\xi_0}$  for every  sufficiently small $\tau\in N_0\setminus\{0\}$, 
\item    \label{c} $p_{f_{\tau},\xi_0}=p_0+\phi(\tau)_0$ for every    sufficiently small $\tau\in N_0\setminus\{0\}$.
\end{enumerate}

Using  formula (\ref{KHeq}) of Lemma \ref{KH} and taking into account  (\ref{b}),  (\ref{c}) we see that, via the canonical  isomorphisms
$$\det(Y_{\xi_0})=\xi_0\otimes \det(Y_{\xi_0}/\xi_0)\ ,\ $$
$$\det(W)=\phi(\tau)(\xi_0)\otimes \det(W/\phi(\tau)(\xi_0))\ ,
$$
we have  for every  sufficiently small $\tau\in N_0\setminus\{0\}$
\begin{equation}\label{degdef} \deg_{\nu(f_{\tau},\mu),\xi_0}([f_{\tau}]_X)=\mathrm{sign}(\mu_{\xi_0}\circ (r_{\phi(\tau),\xi_0}\otimes\det(p_{0}+\phi(\tau)_0 )))$$
$$=\mathrm{sign}(\mu_{\xi_0}\circ (r_{\phi(\tau),\xi_0}\otimes\det(p_{0} )))\ .
\end{equation}
For the last equality we used  $\lim_{t\to 0} (p_0+t\phi(\tau)_0)=p_0$ and the continuity of the determinant. Now we use  the canonical isomorphism
$$v:\qmod{W}{f_0(Y_{\xi_0})}\otimes  \det ( f_0(Y_{\xi_0}))\to \det(W)
$$
and we apply    Remark \ref{det-rem} to the subspaces $A:=\phi(\tau)(\xi_0)$ (for $\tau\ne 0$), $B:= f_0(Y_{\xi_0})$ of $W$. Therefore  let
$$\alpha:\phi(\tau)(\xi_0)\to \qmod{W}{f_0(Y_{\xi_0})}\ ,\ \beta:   f_0(Y_{\xi_0})\to \qmod{W}{\phi(\tau)(\xi_0)} 
$$
be the isomorphisms associated with the direct sum decomposition (\ref{direct}). Using the second statement of  Remark \ref{det-rem}  we see that the equality (\ref{degdef}) remains true if we replace $r_{\varphi_0,\xi_0}$ by  
$$\alpha\circ   r_{\phi(\tau),\xi_0}:\xi_0\to  \qmod{W}{f_0(Y_{\xi_0})}\ ,$$
and $p_0$ by 
$$\beta^{-1}\circ p_0:Y_{\xi_0}/\xi_0\to f_0(Y_{\xi_0})\ .$$
But 
$$\alpha\circ   r_{\phi(\tau),\xi_0}=\resto{\bar\phi(\tau)}{\xi_0}:\xi_0\to Y_{\xi_0}/\xi_0\ ,\ \beta^{-1}\circ p_0=\bar f_0:Y_{\xi_0}/\xi_0\to f_0(Y_{\xi_0})\ .$$
Therefore
\begin{equation}\label{degnew} \deg_{\nu(f_{\tau},\mu),\xi_0}([f_{\tau}]_X)=\mathrm{sign}(\mu_{\xi_0}\circ (\resto{\bar\phi(\tau)}{\xi_0}\otimes\det(\bar f_0 )))\ \forall \tau\in N_0\setminus\{0\}\ .
\end{equation}

Now  recall that $\resto{\bar\phi(\tau)}{\xi_0}=a_{f_0}(\tau)$ and that 
$$a_{f_0}(\tau)\otimes\det(\bar f_0 )=u_{f_0}(a_{f_0}(\tau))=\psi_{f_0}(\tau)$$
 by the definitions  of $u_{f_0}$ and $\psi_{f_0}$. This proves the claim in the  case of an affine map $\fg$.\\

In order to prove the statement for  a general map   $\fg$ note that the space  ${\cal F}_{f_0}$ of maps $\fg:N_0\to\Hom(V,W)$ satisfying the hypotheses of the theorem is a closed affine subspace of the Fréchet space ${\cal C}^\infty(N_0,\Hom(V,W))$.  Fix a norm on the line $N_0$. For a bounded (with respect to the ${\cal C}^\infty$-topology) subset ${\cal K}\subset {\cal F}_{f_0}$  we can find $\varepsilon>0$ such that $[f_\tau]_X$  is defined and is a local diffeomorphism at $\xi_0$ for every $\fg\in {\cal K}$  and  every  $\tau\in N_0\setminus\{0\}$ with $\|\tau\|<\varepsilon$. This shows that the map 
$$\fg\mapsto \deg_{\nu(f_{ \tau},\mu),\xi_0}([f_{\tau}]_X)\ \hbox{ for small } \tau\in N_0\setminus\{0\}$$
is locally constant on ${\cal F}_{f_0}$. But this space is connected and contains affine maps.
  \\ \\
(3)   Note first  that it is sufficient to prove the claimed formula for a special  map $\fg:N_0\to \Hom(V,W)$ satisfying the hypothesis of the theorem. This is the case since, for two such maps $\fg$, $\g$, the points $\fg(\tau)$, $\g(\tau)$ belong to the same chamber  (see Definition \ref{chambers}) for any sufficiently small $\tau\in N_0\setminus\{0\}$.\\

We will construct a special map $\fg:N_0\to \Hom(V,W)$ which satisfies the hypothesis of the theorem, is affine, and has the following remarkable property:
\\ \\
{\bf P.}  {\it There exists $\zeta_0\in\P(W)$ such that the  subspace $L_0:=f_{\tau}^{-1}(\zeta_0)$   is independent of $\tau\in N_0$ and $\P(L_0)$ is transversal to  $X$ at any intersection point.
}
\\ 

The existence of such a map solves our problem. Indeed,  since $X$ is compact and $\P(L_0)$ is transversal to  $X$ at any intersection point,  the intersection $F:=\P(L_0)\cap X$ is finite.  By (1) we know that, for every sufficiently small $\tau\in N_0\setminus\{0\}$,  the map $[f_\tau]_X$ is defined on all of $X$. On the other hand our transversality condition implies that  $\zeta_0$ is a regular value of these maps.  

 Taking $\tau=0$ in the first condition of {\bf P}    we see that $\xi_0\in F$. Applying Proposition \ref{degformula} to $[f_\tau]_X$ for sufficiently small   $\tau\in N_0\setminus\{0\}$, we obtain
$$\deg_{\nu(f_\tau,\mu)}([f_\tau]_X)=\deg_{\nu(f_\tau,\mu),\xi_0}([f_\tau]_X)
+\sum_{\xi\in  F\setminus\{\xi_0\}} \deg_{\nu(f_\tau,\mu),\xi}([f_\tau]_X)\ .
$$
Since the second term is obviously independent  of $\tau\ne 0$   we get
$$\deg_{\nu(f_\tau,\mu)}([f_\tau]_X)-\deg_{\nu(f_{-\tau},\mu)}([f_{-\tau}]_X)=\deg_{\nu(f_\tau,\mu),\xi_0}([f_\tau]_X)-\deg_{\nu(f_{-\tau},\mu),\xi_0}([f_{-\tau}]_X)\ ,
$$
so the result follows from (2).
\\

 We conclude the proof of the theorem with the construction of an affine  map $\fg=(f_\tau)_{\tau\in N_0}$ which satisfies the hypothesis of the theorem  and has the property {\bf P}.

Put  $K_0:=\ker(f_0)$. Since $f_0$ is an epimorphism, we have $\dim(K_0)=N-n$ and the space ${\cal L}_{K_0}$ of  $(N-n+1)$-dimensional  linear subspaces   $L\subset V$ with $K_0\subset L$ can be identified with $\P(W)$ via $f_0$. The subset 
$${\cal L}_0:=\{L\in {\cal L}_{K_0}|\ L\cap Y_{\xi_0}=\xi_0\}$$
is non-empty and Zariski open, hence open and dense in  ${\cal L}_{K_0}$. Let $L_0\in {\cal L}_0$ an element which corresponds to a regular value $\zeta_0$ of the projection 
$$[f_0]_X:X\setminus\{\xi_0\}\to\P(W)\ .$$ The existence of such a point follows  from Sard's theorem.

Note that  $\P(L_0)$ is transversal to $X$ at any intersection point $\xi\in\P(L_0)\cap X$. Indeed, the condition $L_0\cap Y_{\xi_0}=\xi_0$ implies that $\P(L_0)$ is transversal to $X$ at $\xi_0$, whereas the condition that $\zeta_0=f_0(L_0)$ is a regular value of $[f_0]_X:X\setminus\{\xi_0\}\to\P(W)$ implies that $\P(L_0)$ is transversal to $X$ at any point $\xi\in \P(L_0)\cap X\setminus\{\xi_0\}$.\\

Therefore the obtained  $(N-n+1)$-dimensional subspace $L_0$ has the properties:
\begin{enumerate}[(a)]
\item \label{contiansxi} $K_0\subset L_0$,
\item \label{IntTrans}   $L_0\cap Y_{\xi_0}=\xi_0$,
\item   \label{IntWithX} $\P(L_0)$ is transversal to $X$ at any intersection point.\end{enumerate}

Now we choose a complement for each  of the three inclusions in the chain
$$\xi_0\subset K_0\subset L_0\subset V\ .
$$
Let $U_0$ be an arbitrary complement of $\xi_0$ in $K_0$, $l_0$ an arbitrary complement of $K_0$ in $L_0$, and $M_0$ a complement of $L_0$ in $V$ which is contained in $Y_{\xi_0}$.   The latter complement exists, because by  (\ref{IntTrans})  any complement of $\xi_0$ in $Y_{\xi_0}$ is also a complement of $L_0$ in $V$.  

We have $\dim(U_0)=N-n-1$, $\dim(l_0)=1$, $\dim(M_0)=n-1$. The sum $l_0+M_0$ is a complement of $K_0$ in $V$, hence $f_0$ induces an isomorphism $l_0+M_0\to W$. We obtain an  induced internal direct sum decomposition of $W=\zeta_0\oplus W_0$ with
$$ \zeta_0:=f_0(l_0)=f_0(L_0)\ ,\ W_0:=f_0(M_0)=f_0(Y_{\xi_0})\ .$$

With respect to the internal direct sum decompositions
$$V=\xi_0\oplus U_0 \oplus l_0\oplus M_0\ ,\ W=\zeta_0\oplus W_0
$$
the map $f_0$ is given by a matrix of the form
$$
\left(\begin{matrix}
0&0&g_0&0\\
0&0&0&h_0
\end{matrix}\right)\ ,
$$
where $g_0:l_0\to \zeta_0$, $h_0:M_0\to W_0$ are the isomorphisms induced by $f_0$. 
We denote by  $r_\tau:\xi_0\to \zeta_0$  the morphism defined as the image of $\tau$ under the composition
$$N_0\textmap{a_{f_0}\simeq}\Hom(\xi_0,W/f_0(Y_{\xi_0}))=\Hom(\xi_0,W/W_0)\stackrel{\simeq}{\longrightarrow}\Hom(\xi_0,\zeta_0)\ ,
$$
and we define
$$f_\tau:=\left(\begin{matrix}
r_\tau&0&g_0&0\\
0&0&0&h_0
\end{matrix}\right)\ .
$$ 
The map $\fg=(f_\tau)_\tau:N_0\to\Hom(V,W)$ is  affine and satisfies the hypothesis  of the theorem (because   $A_{f_0}\circ\fg_{*,0}$ coincides with $a_{f_0}$ by definition of $r_\tau$).
Moreover, since $h_0$ is an isomorphism, for every $\tau\in N_0$ the subspace $f_\tau^{-1}(\zeta_0)$ coincides with $L_0$. Taking into account (\ref{c}) we see that 
 $\fg$ satisfies property ${\bf P}$, which concludes the proof.  
\begin{center}
\includegraphics[scale=0.5]{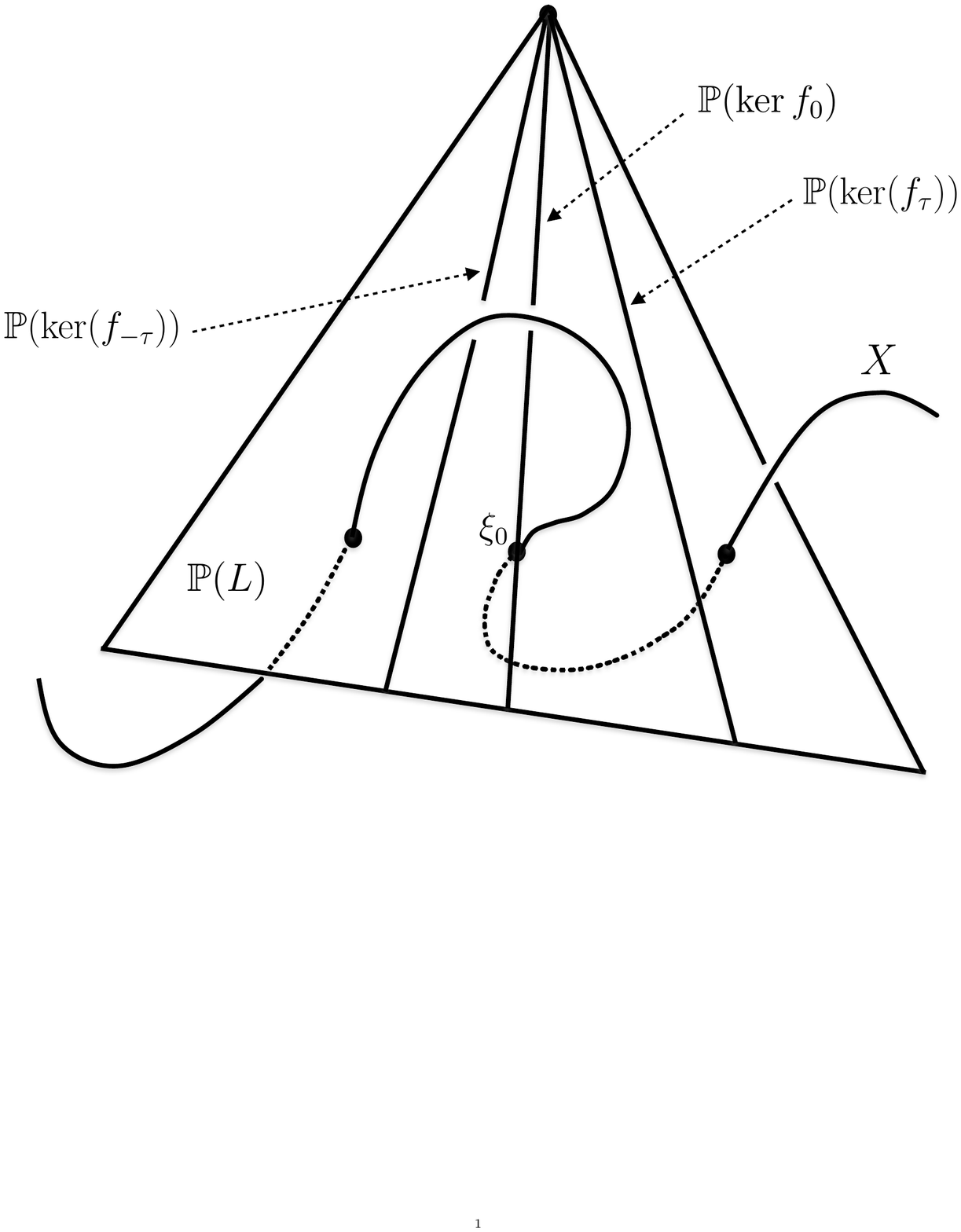}
\end{center}
\vspace{3mm}
 \end{proof}

 \begin{co} \label{wall-cross-co} 
  Let  $f_0\in \mathcal{W}_X^0$  be a regular  point on the wall and let
$$\gamma=(g_t)_t:(-\varepsilon,\varepsilon)\to\Hom(V,W)$$
 be a smooth path such that $\gamma(0)=f_0$ and the image $[\dot\gamma(0)]$ of the velocity vector in $N_{{\cal W}^0_X,f_0}$ is non-zero.  
Then for every sufficiently small $t\in (-\varepsilon,\varepsilon)\setminus\{0\}$ we have: 
\begin{enumerate}
\item $g_t\in\Hom(V,W)_X$ and  $[g_t]_X$ is a local diffeomorphism at $\xi_{f_0}$,
\item  $\deg_{\nu(g_t,\mu),\xi_0}([g_t]_X)=\mathrm{sign}(t) \mathrm{sign}(\mu_{\xi_0}\circ(\psi_{f_0}([\dot\gamma(0)])))$, 
\item $\deg_{\nu(g_t,\mu)}([g_t]_X)-\deg_{\nu(g_{-t},\mu)}([g_{-t}]_X)=2\mathrm{sign}(t)\mathrm{sign}(\mu_{\xi_0}\circ(\psi_{f_0}([\dot\gamma(0)])))$.
\end{enumerate}
\end{co}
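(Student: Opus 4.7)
The plan is to reduce the corollary to Theorem \ref{wall-cross-th}. Set $\tau:=[\dot\gamma(0)]\in N_0\setminus\{0\}$ and let $\phi:N_0\to\Hom(V,W)$ be the unique linear map with $\phi(\tau)=\dot\gamma(0)$; since $\dot\gamma(0)$ projects to $\tau$, this $\phi$ is a right splitting of the exact sequence $0\to T_{\mathcal{W}_X^0,f_0}\to \Hom(V,W)\to N_0\to 0$, so the affine map $\fg(\tau'):=f_0+\phi(\tau')$ satisfies the hypotheses of Theorem \ref{wall-cross-th}. By Taylor expansion, $\gamma(t)=\fg(t\tau)+O(t^2)$. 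Applying the theorem at parameter value $t\tau$ and using the linearity $\psi_{f_0}(t\tau)=t\,\psi_{f_0}(\tau)$ already yields statements (1)--(3) of the corollary with $g_t$ replaced by $\fg(t\tau)$. It therefore suffices to transfer these statements from $\fg(t\tau)$ to $g_t$ for small $t\neq 0$.

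For this I would introduce the straight-line homotopy
\[
F(t,s):=(1-s)\,g_t+s\,\fg(t\tau),\qquad (t,s)\in(-\varepsilon,\varepsilon)\times[0,1],
\]
satisfying $F(t,0)=g_t$, $F(t,1)=\fg(t\tau)$, $F(0,s)=f_0$, and $\partial_tF(t,s)|_{t=0}=(1-s)\dot\gamma(0)+s\,\phi(\tau)$, whose image in $N_0$ equals $\tau$ for every $s\in[0,1]$. Hence each curve $t\mapsto F(t,s)$ is transversal to $\mathcal{W}_X$ at $f_0$ and the argument of part (1) of the theorem applies. The key point is that the transversality datum appearing in that proof is $A_{f_0}\bigl((1-s)\dot\gamma(0)+s\,\phi(\tau)\bigr)=a_{f_0}(\tau)$, the same isomorphism $\xi_0\to W/f_0(Y_{\xi_0})$ for every $s$; combined with the compactness of $[0,1]$ this delivers a uniform $\varepsilon>0$ such that, for all $0<|t|<\varepsilon$ and all $s\in[0,1]$, one has $F(t,s)\in\Hom(V,W)_X$ and $\ker F(t,s)\cap Y_{\xi_0}=\{0\}$, so that $[F(t,s)]_X$ is a local diffeomorphism at $\xi_0$ by Remark \ref{localiso}.

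With this uniform control in hand the three conclusions are quick. Claim (1) is the case $s=0$. For claim (3), the path $s\mapsto F(t,s)$ joins $g_t$ and $\fg(t\tau)$ inside a common chamber of $\Hom(V,W)\setminus\mathcal{W}_X$, so $\deg_\mu^X(g_t)=\deg_\mu^X(\fg(t\tau))$ (and similarly for $-t$), and the jump formula transports from the theorem applied at $\pm t\tau$. For claim (2), formula (\ref{KHeq}) of Lemma \ref{KH} writes $\deg_{\nu(F(t,s),\mu),\xi_0}([F(t,s)]_X)$ as the sign of a continuous, nowhere vanishing expression in $s$, hence its value is constant in $s$, and at $s=1$ it equals $\mathrm{sign}(t)\,\mathrm{sign}(\mu_{\xi_0}\circ\psi_{f_0}(\tau))$ by the theorem. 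The only delicate step in this plan is precisely the uniform-in-$s$ estimate underlying the preceding paragraph; this uniformity is furnished for free by the $s$-independence of the transversality datum $a_{f_0}(\tau)$.
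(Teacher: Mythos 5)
Your argument is correct, but it is much more roundabout than the paper's, and the extra work duplicates a step already carried out inside the proof of Theorem \ref{wall-cross-th}. The paper's proof is a single line: set $\tau_0:=[\dot\gamma(0)]$ and observe that, since $N_0:=N_{{\cal W}^0_X,f_0}$ is one-dimensional, the prescription $\fg(c\,\tau_0):=g_c$ (after extending $\gamma$ smoothly to $\R$ if needed) defines a smooth map $\fg:N_0\to\Hom(V,W)$ with $\fg(0)=f_0$ and with $\fg_{*,0}$ a right splitting of the normal sequence, so that $g_t=\fg(t\tau_0)$ is exactly of the form covered by Theorem \ref{wall-cross-th}; applying the theorem at $\tau=t\tau_0$ and using the linearity $\psi_{f_0}(t\tau_0)=t\,\psi_{f_0}(\tau_0)$ then gives all three assertions. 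The observation you missed is that Theorem \ref{wall-cross-th} is stated and proved for \emph{arbitrary} smooth maps $\fg$, not only affine ones, so there is no need to pass through the affine linearization $\fg(\tau')=f_0+\phi(\tau')$ and then transport the conclusions back to $g_t$ by a homotopy. Your straight-line homotopy with its uniform-in-$s$ transversality estimate is sound in substance --- compactness of $[0,1]$ together with boundedness of the family $\{t\mapsto F(t,s)\}_{s\in[0,1]}$ in the ${\cal C}^\infty$-topology does deliver the uniform $\varepsilon$ --- but this is essentially the same bounded-family continuity argument that the proof of Theorem \ref{wall-cross-th}(2) already runs when extending from affine to general smooth $\fg$. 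In short, your route is correct but redundant; the generality already built into the theorem lets you skip your second and third paragraphs entirely.
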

\begin{proof} We may suppose that $\gamma$ is given by $g_t=f_{t\tau_0}$, where $\tau_0\in N_{{\cal W}^0_X,f_0}\setminus\{0\}$ and $\fg:N_{{\cal W}^0_X,f_0}\to \Hom(V,W)$ is a map satisfying the hypothesis of Theorem \ref{wall-cross-th}.
\end{proof}

This implies the following general difference formula for paths which cross the wall transversally in regular points:
 
\begin{thry} \label{global} (difference formula) Let  
$\gamma=(g_t)_{t}:[0,1]\to \Hom(V,W)$
 be a smooth path such that
\begin{enumerate}
\item  $g_0$, $g_1\in \Hom(V,W)_X$,
\item  $\im(\gamma)$  intersects the wall $\mathcal{W}_X$ only   in regular points,
\item $\gamma$  is transversal to  $\mathcal{W}_X^0$.
\end{enumerate}
Let  $\gamma^{-1}(\mathcal{W}_X)=\{t_1,\dots,t_k\}$. Then one has
$$\deg_{\nu(g_{1},\mu)}([g_{1}]_X)-\deg_{\nu(g_{0},\mu)}([g_{0}]_X)=2\sum_{i=1}^k\mathrm{sign}(\mu_{\xi_{g_{t_i}}}\circ(\psi_{g_{t_i}}([\dot\gamma(t_i)])))\ ,$$
where $[\dot\gamma(t_i)]$ denotes  the projection of the velocity vector $\dot\gamma(t_i)$ to the normal line $N_{{\cal W}^0_X,g_{t_i}}$
\end{thry}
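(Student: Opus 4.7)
The plan is to chop the path at its wall crossings and apply the local wall-crossing formula (Corollary \ref{wall-cross-co}) at each crossing, then telescope.

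First I would argue that the set $\gamma^{-1}(\mathcal{W}_X) = \{t_1,\dots,t_k\}$ is indeed finite. Since $\im(\gamma)\cap\mathcal{W}_X\subset\mathcal{W}_X^0$ and $\gamma$ is transversal to the smooth hypersurface $\mathcal{W}_X^0$ by hypothesis, each intersection point of $\gamma$ with $\mathcal{W}_X^0$ is isolated; combined with compactness of $[0,1]$ this gives finiteness. Reordering, I may assume $0<t_1<\cdots<t_k<1$, and I set $t_0:=0$, $t_{k+1}:=1$.

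Next I would establish that the degree is constant on each open subinterval $(t_i,t_{i+1})$. For $t$ in such a subinterval we have $g_t\in\Hom(V,W)_X$, so $[g_t]_X$ is a well-defined smooth map between closed manifolds of the same dimension. By Definition \ref{nu} the relative orientation $\nu(g_t,\mu)=[(r_{g_t}^\vee)^{\otimes n}\otimes\mu]$ depends continuously on $t$ (the family $r_{g_t}$ is smooth in $t$, and $\mu$ is fixed). From the defining equation $\deg_\nu(g)\{M\}=\nu_*(g^*\{N\})$ it follows that $t\mapsto\deg_{\nu(g_t,\mu)}([g_t]_X)$ is a continuous $\Z$-valued function on the connected interval $(t_i,t_{i+1})$, hence constant; call its value $d_i$. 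Continuity at the endpoints $0$ and $1$ (where $g_0,g_1\notin\mathcal{W}_X$) then gives $\deg_{\nu(g_0,\mu)}([g_0]_X)=d_0$ and $\deg_{\nu(g_1,\mu)}([g_1]_X)=d_k$.

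The final step is to compute $d_i-d_{i-1}$ via Corollary \ref{wall-cross-co}. Localizing $\gamma$ near each $t_i$, the transversality hypothesis ensures that the image $[\dot\gamma(t_i)]$ of the velocity vector in the normal line $N_{\mathcal{W}_X^0,g_{t_i}}$ is non-zero, so the corollary applies and yields
\[
d_i-d_{i-1}=2\,\mathrm{sign}\bigl(\mu_{\xi_{g_{t_i}}}\circ\psi_{g_{t_i}}([\dot\gamma(t_i)])\bigr).
\]
Telescoping
\[
\deg_{\nu(g_1,\mu)}([g_1]_X)-\deg_{\nu(g_0,\mu)}([g_0]_X)=d_k-d_0=\sum_{i=1}^k(d_i-d_{i-1})
\]
produces the claimed difference formula.

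The substantive input is entirely contained in Corollary \ref{wall-cross-co} and in the preceding description of the wall as a smooth hypersurface at regular points; the only delicate point in this synthesis is the constancy of the degree on the subintervals, which is really just the standard ``integer-valued continuous function on a connected set'' argument once one has observed that $\nu(g_t,\mu)$ varies continuously with $t$. I do not anticipate any serious obstacle.
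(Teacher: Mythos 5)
Your proposal is correct and takes essentially the same approach as the paper: decompose $[0,1]$ at the wall crossings, observe that the degree is locally constant on each complementary interval (since it descends to $\pi_0(\Hom(V,W)\setminus\mathcal{W}_X)$), apply Corollary \ref{wall-cross-co} at each crossing, and telescope. The paper's proof is simply a terser version of the same argument.
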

\begin{proof}  Suppose $t_1<\dots<t_k$. The map 
$t\mapsto  \deg_{\nu(g_{t},\mu)}([g_{t}]_X)$
 is well defined and constant on each  of the  intervals 
$$[0,t_1)\ ,\ (t_1,t_{2})\ , \dots,\ (t_{k-1},t_k)\ ,\ (t_k,1]\ .$$
The jumps are given by Corollary \ref{wall-cross-co}.
\end{proof}

We will now prove  that  any two points $f_0$, $f_1\in \Hom(V,W)_X$ can be connected by a path intersecting  the wall transversally in finitely many regular points.  This result, which has important consequences, is based on   the following 

\begin{thry} The irregular locus $\mathcal{B}_X
=\mathcal{W}_X\setminus \mathcal{W}_X^0$ is closed  and the complement $\Hom(V,W)\setminus \mathcal{B}_X$ is connected.
\end{thry}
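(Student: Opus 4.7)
The plan is to separate the two assertions and attack them with different tools.

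\textbf{Closedness.} Since $\mathcal{W}_X$ is closed in $\Hom(V,W)$ (as the complement of the open set $\Hom(V,W)_X$), it suffices to show that $\mathcal{W}_X^0$ is open in $\mathcal{W}_X$. Fix $f_0\in\mathcal{W}_X^0$ with $\xi_0=\xi_{f_0}$ and $K_0:=\ker(f_0)$. Of the three conditions in Definition \ref{wall-def}, surjectivity of $f$ is visibly open in $\Hom(V,W)$, and the transversality condition $\ker(f)\cap Y_\xi=\xi$ is open in the pair $(f,\xi)$ (it is equivalent to $\dim(\ker(f)\cap Y_\xi)\leq 1$). Uniqueness of the intersection with $X$ follows from Proposition \ref{N}: the map $q_X:\mathcal{J}_X\to\Hom(V,W)$ is an immersion between equidimensional manifolds at $(f_0,\xi_0,K_0)$, hence a local diffeomorphism there. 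Combined with the properness of $q_X$ and the fact that $q_X^{-1}(f_0)=\{(f_0,\xi_0,K_0)\}$, a standard compactness argument (any hypothetical second preimage over points near $f_0$ would accumulate to a second preimage of $f_0$) yields a neighborhood of $f_0$ in $\mathcal{W}_X$ on which $q_X$ has exactly one preimage.

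\textbf{Codimension analysis.} Stratify $\mathcal{B}_X=\mathcal{B}^{(1)}\cup\mathcal{B}^{(2)}\cup\mathcal{B}^{(3)}$ according to which of the three regularity conditions fails, and for each piece exhibit a proper smooth parameterization from a manifold of dimension at most $Nn-2$. First, $\mathcal{B}^{(1)}:=\{f:\rk(f)<n\}$ is the classical rank locus of codimension $N-n+1\geq 2$ (using $n<N$, which holds since $m<N-1$). Next, $\mathcal{B}^{(2)}:=\{f\in\mathcal{W}_X\mid|X\cap\P(\ker f)|\geq 2\}$ is the image of the proper smooth map from the vector bundle
$$\bigl\{(f,\xi_1,\xi_2)\in\Hom(V,W)\times((X\times X)\setminus\Delta_X)\mid f|_{\xi_1+\xi_2}=0\bigr\}\to\Hom(V,W),$$
of total dimension $2m+(N-2)n=Nn-2$. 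Finally, $\mathcal{B}^{(3)}:=\{f\in\mathcal{W}_X\mid\exists\xi\in X\cap\P(\ker f),\ \dim(\ker(f)\cap Y_\xi)\geq 2\}$ is the image of the proper smooth map from the vector bundle
$$\bigl\{(f,\xi,K')\mid \xi\in X,\ \xi\subset K'\subset Y_\xi,\ \dim K'=2,\ K'\subset \ker f\bigr\}\to\Hom(V,W),$$
of total dimension $m+(n-2)+(N-2)n=Nn-3$.

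\textbf{Connectedness via transversality.} For any $f_0,f_1\in\Hom(V,W)\setminus\mathcal{B}_X$, introduce the perturbed affine family $\gamma_h(t):=(1-t)f_0+tf_1+t(1-t)h$ with $h\in\Hom(V,W)$. Applying Sard's theorem to the evaluation maps built from each of the three parameterizations above, for almost every $h$ the smooth path $\gamma_h$ is transverse to every piece of $\mathcal{B}_X$. Since a $1$-dimensional domain cannot meet parameterizing sources of dimension $\leq Nn-2$ transversely inside an $Nn$-dimensional target, transversality forces empty intersection, so $\gamma_h$ avoids $\mathcal{B}_X$ entirely and joins $f_0$ to $f_1$ in the complement.

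The main technical obstacle will be the dimension count for $\mathcal{B}^{(3)}$: one has to parameterize the excess intersection $\ker(f)\cap Y_\xi$ by auxiliary $2$-planes $K'$ in the varying bundle $Y_\xi$, and check that the resulting incidence space is a smooth manifold (a tower of Grassmann and affine-linear bundles over $X$) so that Sard's theorem applies. The remaining steps, in particular the transversality perturbation, are routine.
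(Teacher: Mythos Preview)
Your argument is correct and takes a genuinely different route from the paper's.

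The paper merges your pieces $\mathcal{B}^{(2)}$ and $\mathcal{B}^{(3)}$ into the image of a single map $\hat q_X$ defined over the real blow-up $\widehat{X\times X}_\Delta$ of $X\times X$ along the diagonal (the map $\psi$ sending a point off the diagonal to the secant plane $\xi_1+\xi_2$ and a point on the exceptional divisor to the corresponding tangent plane in $Y_\xi$); this makes the parameterizing map genuinely proper, and both closedness and connectedness are then read off from a cited lemma (Lemma 5.7 in \cite{Te}) about complements of images of proper maps of index $\leq -2$. You instead keep the three strata separate and run a direct parametric Sard argument on the affine family $\gamma_h$; this is more elementary, avoids the blow-up construction, and does not require the cited lemma. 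One caveat: your parameterization of $\mathcal{B}^{(2)}$ is \emph{not} proper --- as $(\xi_1,\xi_2)\to(\xi_0,\xi_0)$ the secant plane $\xi_1+\xi_2$ limits to a tangent $2$-plane in $Y_{\xi_0}$, which is precisely why the paper blows up --- but this does not damage your proof, since your Sard argument never actually uses properness (only smoothness and the dimension bound $\dim M\leq Nn-2$). Your separate closedness argument, showing $\mathcal{W}_X^0$ open in $\mathcal{W}_X$ via the local-diffeomorphism property of $q_X$ established in Proposition~\ref{N}, is likewise different from the paper's (which gets closedness for free from properness of $\hat q_X$ and $r_X$) and is a clean alternative. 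Your dimension count for $\mathcal{B}^{(3)}$ is fine: the incidence space is a $\P^{n-2}$-bundle over $X$ (lines in $Y_\xi/\xi$) carrying a rank-$(N-2)n$ vector bundle, so smoothness is immediate.
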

\begin{proof} We shall identify $\mathcal{B}_X$ with the union of the images of two  smooth proper maps 
$$\hat q_X:\hat{\mathcal{J}}_X \to \Hom(V,W)\ ,\ r_X:\mathcal{D}_X\to\Hom(V,W)$$
of index -2, $n-N-1$ respectively. Then the result  follows  from Lemma 5.7 in \cite{Te}.\\

Denote by $\Delta$ the diagonal of the product $X\times X$ and consider the real blow up
$\widehat{X\times X}_\Delta$ of $X\times X$  along $\Delta$ (see \cite{Wh} section 3, and \cite{Po} section 4  for a similar construction). Set theoretically one has
$$\widehat{X\times X}_\Delta=\big\{(X\times X)\setminus\Delta\big\}\cup\P(N_\Delta)=\big\{(X\times X)\setminus\Delta\big\}\cup\P(T_X)=$$
$$\big\{(X\times X)\setminus\Delta\big\}\cup\P(Y_X/(\resto{\lambda_V}{X}))\ . 
$$

Therefore a point $\zeta\in \P(N_\Delta)$ above a diagonal point $(\xi,\xi)\in\Delta$ defines a plane $\pi_\zeta\subset Y_\xi$ containing $\xi$. We have a natural {\it smooth} map
$$\psi:\widehat{X\times X}_\Delta\to G_2(V)
$$
defined in the following way:
$$
\psi(\zeta):=\left\{\begin{array}{ccc}
\xi+\eta&\rm when &\zeta=(\xi,\eta)\in (X\times X)\setminus\Delta\ ,\\
\pi_\zeta&\rm when& \zeta\in \P(N_\Delta)\ .
\end{array} \right.
$$

We define
$$\hat{\mathcal{J}}_X:=\{(f,K,\zeta)\in\Hom(V,W)\times G_{N-n}(V)\times \widehat{X\times X}_\Delta|\ \psi(\zeta)\subset K\subset\ker(f)\}\ .
$$
This space has a natural structure of a vector bundle of  rank $n^2$  over the incidence variety
$$\hat{\mathcal{I}}_X=\{(\zeta,K)\in \widehat{X\times X}_\Delta\times G_{N-n}(V)|\ \psi(\zeta)\subset K\}\ .
$$
The variety $\hat{\mathcal{I}}_X$ is a locally trivial fibre bundle over $\widehat{X\times X}_\Delta$ with $n(N-n-2)$-dimensional fibre, hence smooth of dimension   $nN-n^2-2$.  This shows that $\dim(\hat{\mathcal{J}}_X)=nN-2$, so the rank of the projection  $\hat q_X:\hat{\mathcal{J}}_X\to \Hom(V,W)$ is -2.

Put now
$$\mathcal{D}_X:=\{(f,L,\xi)\in\Hom(V,W)\times G_{N-n+1}(V)\times X|\ \xi\subset L\subset\ker(f)\}\ .
$$
$\mathcal{D}_X$ has a natural structure of a $(N+1)(n-1)$-dimensional manifold, because it is a rank $(n-1)n $ vector bundle over an $n-1+(n-1)(N-n)$ dimensional basis. Let $r_X:\mathcal{D}_X\to \Hom(V,W)$  be the projection on the first factor.  This is  a smooth proper map of index   $n-N-1\leq -2$. On the other hand, taking into account Definition \ref{wall-def} we see that
$$\mathcal{B}_X=\hat q_X(\hat{\mathcal{J}}_X)\cup r_X(\mathcal{D}_X)\ ,
$$
hence
$$\Hom(V,W)\setminus \mathcal{B}_X=(\Hom(V,W)\setminus \hat q_X(\hat{\mathcal{J}}_X))\setminus r_X(\mathcal{D}_X\setminus r_X^{-1}(\hat q_X(\hat{\mathcal{J}}_X))\ .
$$
Applying now Lemma 5.7 in \cite{Te} to the proper morphisms
$$\hat q_X\ ,\ \resto{r_X}{\mathcal{D}_X\setminus r_X^{-1}(\hat q_X(\hat{\mathcal{J}}_X))}: \mathcal{D}_X\setminus r_X^{-1}(\hat q_X(\hat{\mathcal{J}}_X))\to  (\Hom(V,W)\setminus \hat q_X(\hat{\mathcal{J}}_X))
$$
we see that  the natural maps
$$\pi_0(\Hom(V,W)\setminus \hat q_X(\hat{\mathcal{J}}_X),f_0)\to \pi_0(\Hom(V,W),f_0)\ ,\ $$
$$
\pi_0(\Hom(V,W)\setminus \mathcal{B}_X,f_0)\to \pi_0(\Hom(V,W)\setminus \hat q_X(\hat{\mathcal{J}}_X),f_0)
$$
are bijections, so that $\Hom(V,W)\setminus \mathcal{B}_X$ is connected, as claimed.
\end{proof}
 
 \begin{co} \label{generic-path}  Any pair $(f_0,f_1)\in\Hom(V,W)_X\times \Hom(V,W)_X$ can be connected by a smooth path $\gamma:[0,1]\to\Hom(V,W)$   which intersects the wall $\mathcal{W}_X$ in finitely many regular points, all intersection points being transversal. 
\end{co}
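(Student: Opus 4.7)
The plan is to combine the preceding connectedness theorem with a standard transversality perturbation, carried out inside the open complement of the irregular locus $\mathcal{B}_X$.

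First, recall that by the previous theorem $\mathcal{B}_X$ is closed in $\Hom(V,W)$, so $\Hom(V,W)\setminus\mathcal{B}_X$ is an open submanifold of the vector space $\Hom(V,W)$, and it is connected. Since a connected open subset of a smooth manifold is smoothly path-connected (indeed, any two of its points can be joined by a piecewise linear path which can then be smoothed via a standard mollification keeping the endpoints fixed, and all of these constructions stay inside the open set for sufficiently small perturbations), there exists a smooth path
$$\gamma_0:[0,1]\to \Hom(V,W)\setminus\mathcal{B}_X$$
with $\gamma_0(0)=f_0$ and $\gamma_0(1)=f_1$. Note that both endpoints lie in the open subset $\Hom(V,W)_X\subset\Hom(V,W)\setminus\mathcal{B}_X$.

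Next, I would upgrade $\gamma_0$ to a path transverse to the regular wall $\mathcal{W}_X^0$. By Proposition \ref{N}, $\mathcal{W}_X^0$ is a smooth hypersurface of the open submanifold $\Hom(V,W)\setminus\mathcal{B}_X$. Apply Thom's transversality theorem to the space of smooth paths $[0,1]\to\Hom(V,W)\setminus\mathcal{B}_X$ with fixed endpoints $f_0,f_1$: a generic (hence arbitrarily $\mathcal{C}^\infty$-close) perturbation $\gamma$ of $\gamma_0$ will be transverse to $\mathcal{W}_X^0$. Since $\Hom(V,W)\setminus\mathcal{B}_X$ is open and $\gamma_0([0,1])$ is compact, a sufficiently small perturbation keeps the image inside $\Hom(V,W)\setminus\mathcal{B}_X$, so that the intersection $\gamma([0,1])\cap\mathcal{W}_X$ still lies entirely in the regular part $\mathcal{W}_X^0$.

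Finally, transversality of $\gamma$ to the hypersurface $\mathcal{W}_X^0$ makes $\gamma^{-1}(\mathcal{W}_X^0)$ a zero-dimensional submanifold of $[0,1]$; since $f_0,f_1\notin\mathcal{W}_X$ this set is contained in the open interval $(0,1)$, and compactness of $[0,1]$ forces it to be finite. Thus $\gamma$ is a smooth path from $f_0$ to $f_1$ crossing the wall $\mathcal{W}_X$ in finitely many regular points, at each of which it is transverse to $\mathcal{W}_X^0$, as required.

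The only slightly delicate point is ensuring that the transversality perturbation can be performed \emph{inside} the open set $\Hom(V,W)\setminus\mathcal{B}_X$ and with fixed endpoints; this is standard because $\Hom(V,W)\setminus\mathcal{B}_X$ is open and $\mathcal{W}_X^0$ is a smooth submanifold of it, so the jet transversality argument goes through verbatim, yielding the density of transverse paths in the Fréchet space of smooth paths with fixed endpoints and image contained in this open set.
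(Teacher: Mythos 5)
Your proof is correct and follows essentially the same route as the paper: both invoke the connectedness of $\Hom(V,W)\setminus\mathcal{B}_X$ from the preceding theorem to produce a smooth path avoiding the irregular locus, and then apply a standard relative transversality principle (the paper cites Donaldson--Kronheimer, you cite Thom transversality) to perturb it, with fixed endpoints and within the open set, so that it meets $\mathcal{W}_X^0$ transversally in finitely many points. You simply spell out the routine details the paper leaves implicit.
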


\begin{proof} Since $\Hom(V,W)\setminus \mathcal{B}_X$ is connected, the pair $(f_0,f_1)$ can be connected  by a smooth path $\alpha:[0,1]\to \Hom(V,W)$  which intersects the wall only in regular points. Using a well-known  transversality principle (see \cite{DoK} p. 143) we find small perturbations of  $\alpha$ which coincide with $\alpha$ on a neighbourhood of $\{0,1\}$ and are transversal to the map $\mathcal{W}_X^0\hookrightarrow \Hom(V,W)$.
\end{proof}

Therefore Corollary \ref{generic-path} states that the difference 
$$\deg_{\nu(f_{1},\mu)}([f_{1}]_X)-\deg_{\nu(f_{0},\mu)}([f_{0}]_X)$$
 can always be  computed using such a path from $f_0$ to $f_1$ and the difference formula given by Corollary \ref{global}. Combining with Remark \ref{Phi}  one obtains the following   important general property of  the degree map $\deg_\mu^X:\pi_0(\Hom(V,W)\setminus{\cal W}_X)\to \Z$:
\begin{co} Let $\mu:X\times\det(W)\to \det(Y_X)$ be an orientation parameter. Then: 
\begin{enumerate}
\item All  values of the degree map $\deg_\mu^X:\pi_0(\Hom(V,W)\setminus{\cal W}_X)\to \Z$ are congruent modulo 2.
\item If $a\in \im(\deg_\mu^X)$,  then any integer $c$ with $-|a|\leq c\leq |a|$ which has the same parity as $a$ also belongs to $\im(\deg_\mu^X)$.
\end{enumerate}
\end{co}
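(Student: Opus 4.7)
The plan is to combine three ingredients already established above: the existence of generic paths (Corollary \ref{generic-path}), the wall-crossing/difference formula (Theorem \ref{global} and Corollary \ref{wall-cross-co}), and the sign-reversal behavior of the degree under post-composition with an automorphism (Remark \ref{Phi}).

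For the first claim, given any two elements $f_0,f_1\in\Hom(V,W)\setminus{\cal W}_X$, Corollary \ref{generic-path} supplies a smooth path joining them that meets the wall only transversally and only in regular points. Theorem \ref{global} then writes $\deg_\mu^X(f_1)-\deg_\mu^X(f_0)$ as twice a sum of signs, hence as an even integer. Since $f_0,f_1$ are arbitrary, all values of $\deg_\mu^X$ share a common parity.

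For the second claim, set $a:=\deg_\mu^X(f_0)$ for some $f_0\in\Hom(V,W)\setminus{\cal W}_X$, and pick any $\Phi\in\Aut(W)$ with $\det(\Phi)<0$ (for instance a reflection). The equality $\ker(\Phi\circ f_0)=\ker(f_0)$ shows $\Phi\circ f_0\in\Hom(V,W)\setminus{\cal W}_X$, and Remark \ref{Phi} yields $\deg_\mu^X(\Phi\circ f_0)=-a$. I would then apply Corollary \ref{generic-path} to obtain a smooth path $\gamma=(g_t)_{t\in[0,1]}$ from $f_0$ to $\Phi\circ f_0$ crossing the wall transversally at regular points $0<t_1<\cdots<t_k<1$. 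Writing $t_0:=0$, $t_{k+1}:=1$ and letting $d_j$ denote the constant value of $\deg_\mu^X\circ\gamma$ on the subinterval $(t_j,t_{j+1})$, Corollary \ref{wall-cross-co} gives $d_0=a$, $d_k=-a$, and $d_j-d_{j-1}\in\{+2,-2\}$ for each $j=1,\ldots,k$.

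What remains is a discrete intermediate value argument: any integer sequence $d_0,\ldots,d_k$ whose consecutive differences are $\pm 2$ and whose endpoints are $a$ and $-a$ takes every integer value $c$ with $c\equiv a\pmod{2}$ and $|c|\leq|a|$. Assuming $a\geq 0$ and picking the smallest index $j$ with $d_j\leq c$, the constraints $d_{j-1}>c\geq d_j$, $d_{j-1}-d_j=2$, and $d_{j-1}\equiv d_j\equiv c\pmod{2}$ force $d_j=c$; the case $a<0$ is symmetric. Such a $d_j$ is realized as $\deg_\mu^X(g_t)$ for any $t$ in the corresponding subinterval, hence $c\in\im(\deg_\mu^X)$. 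I do not foresee a serious obstacle here, since the substantive content lives in Theorem \ref{wall-cross-th} and Corollary \ref{generic-path}; the only point requiring mild care is the parity bookkeeping in the discrete IVT step.
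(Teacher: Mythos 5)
Your proof is correct and follows essentially the same route as the paper: the first claim comes from the difference formula along a generic path from Corollary \ref{generic-path}, and the second claim combines the $-\id_{\Z}$-invariance of the image (via Remark \ref{Phi} with an orientation-reversing $\Phi$), the existence of a generic path from $f_0$ to $\Phi\circ f_0$, the $\pm 2$ jumps at wall crossings, and a discrete intermediate value argument. The paper states this second step much more tersely but the underlying argument is identical; your explicit bookkeeping (choosing the smallest index $j$ with $d_j\leq c$ and using the parity constraint to force $d_j=c$) is a clean way to make the discrete IVT precise.
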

\begin{proof} The first statement follows directly from the difference formula. For the second statement  use Corollary \ref{generic-path} and take into account   that 
\begin{itemize}
\item   the jump when crossing the wall transversally at a regular point is $\pm2$,
\item  the image of $\deg_\mu^X$ is invariant under the involution  $-\id_{\Z}$, by Remark \ref{Phi}.
\end{itemize}
\end{proof}
Note that  the  congruence  (1) does not follow  from Proposition \ref{estimates}  since $X$ is not supposed to be algebraic.
\subsection{Examples}
\subsubsection{Projecting a hyperquadric}
 Let $X$ be the  hyperquadric of $\P^n_\R$ ($n\geq 2$) defined by the equation $x_0^2-\sum_{i=1}^n x_i^2=0$. Dehomogenizing with respect to $x_0$ one gets an obvious identification  $X= S^{n-1}$. The relative orientability condition is always satisfied (it is obvious for $n\geq 3$ by Remark \ref{H1}).

Consider the two projections 
$$[f_0]:\P^n\setminus \{[1,0,\dots,0]\}\to\P^{n-1}\ ,\ [f_1]: \P^n\setminus \{[0,0,\dots,1]\}\to\P^{n-1}$$
given by 
$$[f_0]([x]):=[x_1,\dots,x_n]\ ,\ [f_1]([x]):=[x_0,\dots,x_{n-1}]\ .$$
Via the obvious identification $X= S^{n-1}$,  the first map is just the canonical projection $S^{n-1}\to\P^{n-1}$. The degrees of the restrictions $[f_0]_X:X\to\P^{n-1}$, $[f_1]_X:X\to\P^{n-1}$ with respect to a suitable choice of the trivialization $\mu$ are
$$\deg_{\nu(p_{0},\mu)}([f_{0}]_X)=2\ ,\ \deg_{\nu(p_{1},\mu)}([f_{1}]_X)=0\ .
$$
\\ \\
\includegraphics[scale=0.8]{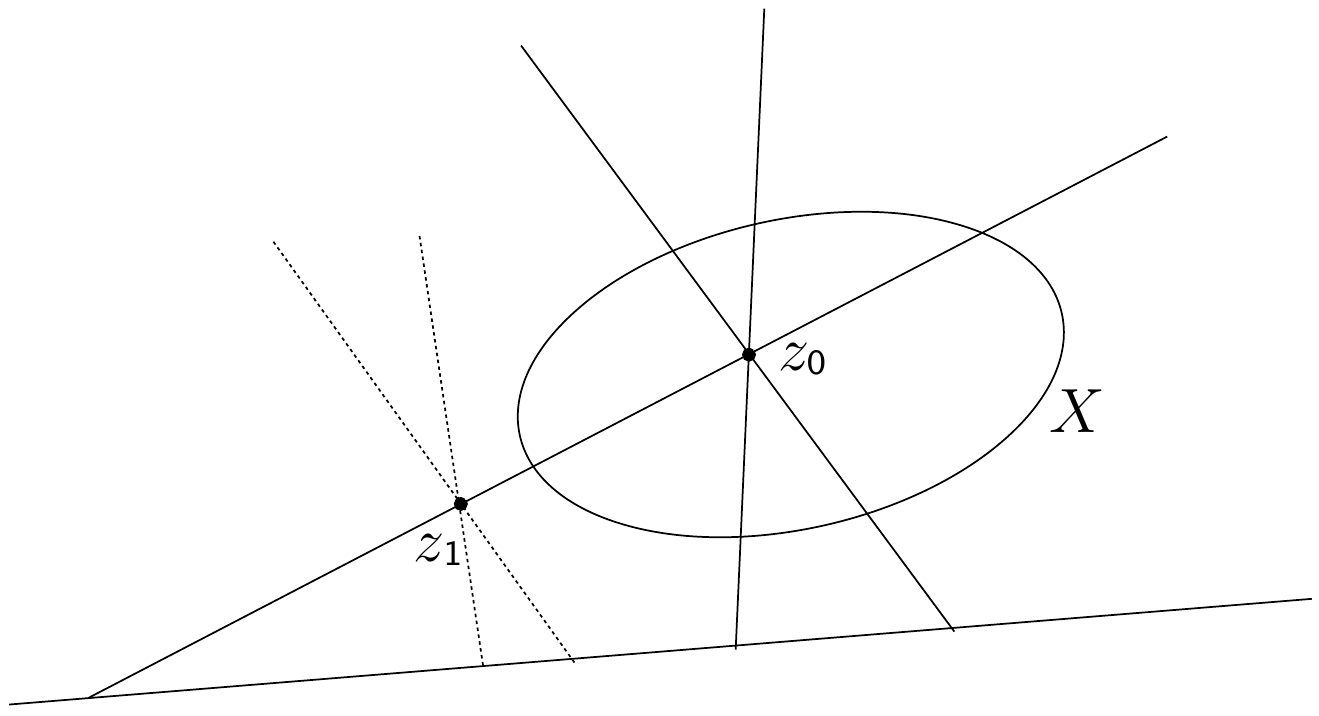}
\\ \\
The  picture  above shows an interesting phenomenon: the projection $[f_1]_X$, which has degree 0,  has some fibres  consisting of two points, which however come with opposite signs. For instance, the intersection of $X$ with the line connecting $z_0$ and $z_1$ consists of two points, which come with the same sign when considered as elements in the fibre of $[f_0]_X$, but with opposite signs when considered as elements in the fibre of $[f_1]_X$.
\\  
\subsubsection{Degree of   rational functions} \label{DegRatFunct}
 
 \def\Rat{\mathrm{Rat}}
 
 Let $_\R\F_n^*$ be the space of pairs of monic polynomials $(p,q)$ with real coefficients  of degree $n$ with no common factor. Writing
$$p(t)=t^n +\sum_{i=0}^{n-1} a_i t^i\ ,\  q(t)= t^n +\sum_{i=0}^{n-1} b_i t^i
$$
we see that $_\R\F_n^*$ can be identified with the Zariski open subset of $\R^{2n}$ defined by the condition   $\mathrm{Res}(p,q)\ne 0$.

A pair $(p,q)\in \ _\R\F_n^*$ defines a rational function
$$f_{pq}(t)=\frac{p(t)}{q(t)}\ ,
$$
hence $_\R\F_n^*$ can be identified with the space of rational functions of degree $n$ sending $\infty$ to 1. Such a rational function $f_{pq}$  has an extension $F_{pq}: \P^1_\R\to  \P^1_\R$. Taking the homogenizations $P\in \R[t_0,t_1]_{n}$, $Q\in \R[t_0,t_1]_{n}$ we see that $F_{pq}$ is given in homogeneous coordinates by   
$$F_{pq}(t_0,t_1):=[P(t_0,t_1),Q(t_0,t_1)]\ .$$

The set of possible degrees $\deg(F_{pq})$ when $(p,q)$ varies in the space $_\R\F_n^*$ is known. The result is given by Brockett's Theorem  (see Segal \cite{Se} p. 41 for an independent proof, or  \cite{By} Theorem 2.1).
\begin{thry} The space $_\R\F_n^*$ has $n+1$ connected components $_\R\F_{uv}^*$, where 
$$u+v=n\ ,\ u\geq 0\ ,\ v\geq 0$$
 and a pair $(p,q)\in\ _\R\F_n^*$ belongs to $ _\R\F_{uv}^*$ if  and only if  $\deg(F_{pq})=u-v$.
\end{thry}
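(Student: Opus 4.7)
The plan is to identify ${}_\R\F_n^*$ with an affine slice of $\Hom(V,W)\setminus\mathcal{W}_X$ via a Veronese factorization, combine the degree bounds of Proposition \ref{estimates} with the wall-crossing machinery to realize all $n+1$ admissible values of $\deg F_{pq}$, and then invoke a classical signature invariant to show the degree is in fact a complete topological invariant for the components. Set $V:=\R[t_0,t_1]_n$, $W:=\R^2$, and let $X:=v_n(\P^1_\R)\subset\P(V)$ be the rational normal curve. A pair $(p,q)\in{}_\R\F_n^*$ with homogenizations $P,Q\in V$ determines $f_{pq}\in\Hom(V,W)$ represented, in the monomial basis of $V$, by the $2\times(n+1)$ matrix whose rows are the coefficient vectors of $P$ and $Q$; the factorization $F_{pq}=[f_{pq}]\circ v_n$ identifies $F_{pq}$ with the restricted projection $[f_{pq}]_X$. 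The monic conditions cut out an affine subspace $\Lambda\subset\Hom(V,W)$ of real dimension $2n$, while the coprimality condition $\mathrm{Res}(p,q)\neq 0$ says exactly that $f_{pq}\in\Hom(V,W)\setminus\mathcal{W}_X$, so ${}_\R\F_n^*=\Lambda\setminus(\Lambda\cap\mathcal{W}_X)$. By the Veronese Proposition at the end of the previous subsection, $F_{pq}$ is canonically relatively oriented because $\dim(W)(1-n)=2-2n$ is even, so $(p,q)\mapsto\deg F_{pq}$ is locally constant on ${}_\R\F_n^*$.

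Applying Proposition \ref{estimates} to the complexified map, whose complex degree equals $n$, yields $|\deg F_{pq}|\leq n$ and $\deg F_{pq}\equiv n\pmod 2$, so the degree takes at most the $n+1$ values $\{-n,-n+2,\dots,n\}$. The extremal value $\pm n$ is realized, for example, by taking $p$ and $q$ monic of degree $n$ with distinct simple real roots that interlace (such as $p(t)=\prod_{k=1}^{n}(t-(2k-1))$ and $q(t)=\prod_{k=1}^{n}(t-2k)$): a direct inspection shows that $F_{pq}$ is monotone on each of the $n+1$ intervals cut out by the poles, with exactly $n$ of these intervals containing a preimage of a generic regular value, all with the same sign of the local degree. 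Starting from such a point and applying the wall-crossing Theorem \ref{wall-cross-th} together with the ``no gaps'' Corollary — adapted to the affine slice $\Lambda$ by noting that generic paths in $\Lambda$ still cross $\mathcal{W}_X^0$ transversally in $\Hom(V,W)$, so each crossing jumps the degree by $\pm 2$ — every value in $\{-n,-n+2,\dots,n\}$ is realized. Hence ${}_\R\F_n^*$ has at least $n+1$ connected components.

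The matching upper bound, that two pairs with the same $\deg F_{pq}$ lie in the same component, is the main obstacle: the wall-crossing framework of the paper by itself does not rule out degree-preserving loops around the irregular locus that could separate additional components. The cleanest resolution is the classical Hermite--Sylvester--Jacobi theory: to $(p,q)\in{}_\R\F_n^*$ one associates the Bezoutian symmetric $n\times n$ matrix $B_{p,q}$, a polynomial function of the coefficients that is non-degenerate precisely on ${}_\R\F_n^*$ and whose signature $(u,v)$, with $u+v=n$, satisfies $\deg F_{pq}=u-v$. Since the signature is locally constant on the non-degenerate locus, it is constant on each component of ${}_\R\F_n^*$ and agrees with the degree; conversely, within each fixed signature class the space of pairs is connected, which one shows either by a congruence deformation of $B_{p,q}$ to the standard form $\mathrm{diag}(I_u,-I_v)$ lifted to a deformation of $(p,q)$, or by direct reduction to a canonical pair $(p_{uv},q_{uv})$ depending only on $(u,v)$. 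Combining the two bounds, ${}_\R\F_n^*$ has exactly $n+1$ components, each characterized by $\deg F_{pq}=u-v$.
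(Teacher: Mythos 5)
The theorem you are proving is not proved in the paper at all: the authors state it as Brockett's theorem, cite Segal and Byrnes for proofs, and use it only to illustrate the chamber formalism at the end of Section 2.3. So there is nothing internal to compare against; your argument must stand on its own. Your lower bound $|\pi_0({}_\R\F_n^*)|\geq n+1$ is fine (the Veronese factorization, the canonical relative orientation since $2(1-n)$ is even, the bounds from Proposition \ref{estimates}, and the realization of all admissible degrees, for which the paper already exhibits the explicit $g_{uv}$). One caution on the realization step: the paper's ``no gaps'' corollary does not transfer to the monic slice $\Lambda$ as directly as you suggest, because its proof uses Remark \ref{Phi} with $\Phi=-\mathrm{id}_W$, and $(-p,-q)$ leaves $\Lambda$; the correct substitute inside $\Lambda$ is the involution $(p,q)\mapsto(q,p)$, which reverses the degree, or simply the explicit $g_{uv}$.

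The genuine gap is in the upper bound, exactly the step you flag as the main obstacle, and your sketch does not close it. The Hermite--Bezoutian signature does show correctly that $\deg F_{pq}$ is constant on components, but the converse --- connectedness of each fixed-signature locus in ${}_\R\F_n^*$ --- is the actual content of the theorem, and neither of your two suggestions proves it. The ``congruence deformation of $B_{p,q}$ to $\mathrm{diag}(I_u,-I_v)$ lifted to a deformation of $(p,q)$'' fails because the Bezoutian map ${}_\R\F_n^*\to\mathrm{Sym}^n(\R)$ is not a submersion onto the nondegenerate symmetric matrices: for $n\geq 4$ its image is a proper $2n$-dimensional subvariety of the $\tfrac{n(n+1)}{2}$-dimensional target, so a generic congruence path (and the endpoint $\mathrm{diag}(I_u,-I_v)$ itself) leaves the image and cannot be lifted. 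The ``direct reduction to a canonical pair'' is the right classical route, but you have only named it. What it actually requires is the Hankel/Markov-parameter realization: writing $p/q=1+\sum_{k\geq 1}c_kt^{-k}$ gives a homeomorphism of ${}_\R\F_n^*$ onto $\{c\in\R^{2n}:\det H_n(c)\neq 0\}$ with $H_n$ the $n\times n$ Hankel matrix in $c_1,\dots,c_{2n-1}$; this fibers with fibers $\R$ over the space of nondegenerate Hankel matrices, and one must then prove that the Hankel matrices of each fixed signature form a connected set. That last statement is true and classical, but it is precisely the hard step and must be proved, not asserted. As written your argument establishes $|\pi_0({}_\R\F_n^*)|\geq n+1$ and that $\deg$ descends to $\pi_0$, but not the claimed bijection.
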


Therefore  the set of possible degrees is $-n, -n+2,\dots,n-2,n$. Note that the degree of the map $F_{pq}^\C:\P^1_\C\to  \P^1_\C$ defined by a pair $(p,q)\in\ _\R\F_n^*$ is always $n$. We see that in this case {\it all}  values of the real degree  allowed by the general estimates and congruences in Proposition \ref{estimates} can occur.

The pair of polynomials corresponding to the rational function 
$$g_{uv}(t)=1+\sum_{i=1}^v\frac{-1}{t+i}+\sum_{j=1}^u \frac{1}{t-j}\ .
$$
is an element of $ _\R\F_{uv}^*$.

Note that the map $F_{pq}$  associated with a pair $(p,q)\in\ _\R\F_n^*$ can be  identified  with the composition $[\pi_{pq}]\circ v_n$, where $v_n:\P^1_\R\to \P^{n}_\R$ is the Veronese map  of degree $n$, and $\pi_{pq}:\R^{n+1}\to\R^2$ is the projection defined by $(P,Q)$. Therefore the theorem of Brockett identifies $n+1$ chambers in the complement 
$$\Hom(\R^{n+1},\R^2)_{{v_n(\P^1)}}= \Hom(\R^{n+1},\R^2)\setminus{\cal W}_{v_n(\P^1)}\ .$$

\section{Examples}

\subsection{Conjugation manifolds}

Let $(X,\tau)$ a topological space endowed with an involution such that $H^{\rm odd}(X,\Z_2)=0$. We recall from \cite{HHP}  that a cohomology frame for $(X,\tau)$ is a pair $(\kappa,\sigma)$, where
\begin{enumerate}
\item   $\kappa:H^{2*}(X,\Z_2)\to H^*(X^\tau,\Z_2)$ is a group isomorphism.
\item $\sigma:H^{2*}(X,\Z_2) \to H^{2*}_{\Z_2}(X,\Z_2)$ is a group morphism which is a section of the restriction map $\rho:H^{2*}_{\Z_2}(X,\Z_2)\to H^{2*}(X,\Z_2)$,
\end{enumerate}
such that the following conjugation equation holds:
$$r\circ \sigma(a)=\kappa(a) u^m + q(a)\ \forall a\in H^{2m}(X,\Z_2)\ .
$$
Here $r$ denotes the restriction map $H^*_{\Z_2}(X,\Z_2)\to H^*_{\Z_2}(X^\tau,\Z_2)=H^*(X,\Z_2)[u]$, and $q(a)$ is an element in $H^*(X,\Z_2)[u]$ whose degree with respect to $u$ is smaller than $m$.

If a cohomology frame for $(X,\tau)$ exists, then it is unique, natural with respect to equivariant maps, and  $\kappa$, $\sigma$  are automatically ring isomorphisms (not only group isomorphisms).

A $\Z_2$-space $(X,\tau)$ which admits a cohomology frame is called {\it conjugation space}. Examples of conjugation spaces are: all complex Grassmann  manifolds (with the  standard Real structure), all toric manifolds (with  their standard Real structure).

Let now $(X,\tau)$ be a paracompact conjugation space with cohomology frame $(\kappa,\sigma)$, and  let $( E,\tilde\tau)$ a Real complex vector bundle on $X$. Denote by $\bar c(E)$ the image of the total Chern class $c(E)$ in $H^*(X,\Z_2)$. Then $\kappa(\bar c(E))=w(E^{\tilde\tau})$ (see \cite{HHP} p. 950).

\begin{pr} Let  $(X,\tau)$, $(Y,\iota)$ be Real complex manifolds which are conjugation spaces with respect to their Real structures, and  let $f:X\to Y$ be a Real holomorphic map. Then $f$ is relatively orientable if and only if 
$f^*(c_1(Y))\equiv c_1(X)$ mod 2.
\end{pr}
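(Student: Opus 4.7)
The plan is to reduce the statement to a computation with the conjugation-frame isomorphism $\kappa$, applied to the (Real) tangent bundles $T_X$ and $T_Y$.

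First I would recall the characterization of relative orientability from Remark \ref{w}: the induced map $f(\R):X^\tau\to Y^\iota$ is relatively orientable if and only if
$$f(\R)^*\bigl(w_1(T_{Y^\iota})\bigr)=w_1(T_{X^\tau})\ .$$
So it suffices to compare these two Stiefel--Whitney classes.

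Next I would invoke the property recalled just before the proposition: for a Real complex vector bundle $(E,\tilde\tau)$ over a conjugation space,
$\kappa\bigl(\bar c(E)\bigr)=w(E^{\tilde\tau})$.
Applying this to $T_X$ and $T_Y$ (which are Real complex vector bundles, since $\tau$, $\iota$ are antiholomorphic and $f$ is Real holomorphic) and selecting degrees, I get
$$w_1(T_{X^\tau})=\kappa_X\bigl(\bar c_1(T_X)\bigr)\ ,\qquad w_1(T_{Y^\iota})=\kappa_Y\bigl(\bar c_1(T_Y)\bigr)\ ,$$
where the bar denotes reduction mod 2. Here I am using that $\kappa$ is a graded isomorphism sending $H^{2k}(-,\Z_2)$ to $H^k((-)^{\mathrm{inv}},\Z_2)$, so degree-$1$ parts match up.

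Then I would use the naturality of cohomology frames with respect to equivariant maps (stated in \cite{HHP} and recalled in the excerpt). Since $f$ is Real, it is $\Z_2$-equivariant, so
$$\kappa_X\circ f^*=f(\R)^*\circ\kappa_Y$$
as morphisms $H^{2*}(Y,\Z_2)\to H^*(X^\tau,\Z_2)$. Applying this to $\bar c_1(T_Y)$ and combining with the previous step yields
$$f(\R)^*\bigl(w_1(T_{Y^\iota})\bigr)=\kappa_X\bigl(\overline{f^*(c_1(Y))}\bigr)\ .$$

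Finally, since $\kappa_X$ is an isomorphism, the equality $f(\R)^*(w_1(T_{Y^\iota}))=w_1(T_{X^\tau})$ is equivalent to $\overline{f^*(c_1(Y))}=\overline{c_1(X)}$ in $H^2(X,\Z_2)$, i.e.\ to the congruence $f^*(c_1(Y))\equiv c_1(X)\pmod 2$. There is no real obstacle here; the only subtle point is checking that the naturality of $(\kappa,\sigma)$ really applies to $f$ (a Real holomorphic map between Real manifolds qualifies as an equivariant continuous map between the underlying conjugation spaces), and that the tangent bundles of $X$ and $Y$ carry the canonical Real structures induced by $\tau$ and $\iota$, so that $T_X^\tau=T_{X^\tau}$ and $T_Y^\iota=T_{Y^\iota}$ as real vector bundles.
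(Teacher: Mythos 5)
Your argument is exactly the paper's: reduce relative orientability to the Stiefel--Whitney condition of Remark \ref{w}, convert $w_1$ of the fixed-point tangent bundles to $\kappa(\bar c_1)$ using $\kappa(\bar c(E))=w(E^{\tilde\tau})$, use naturality of cohomology frames to commute $\kappa$ with the pullbacks, and conclude via injectivity of $\kappa_X$. The paper just writes the same chain of identities more tersely, implicitly relying on $\kappa_X$ being an isomorphism for the equivalence.
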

\begin{proof} One has 
$$f(\R)^*w_1(T_{Y(\R)})=f(\R)^*(\kappa_Y (\bar c_1(T_Y)))$$
$$=\kappa_X(f^*(\bar c_1(T_Y)))=\kappa_X(c_1(T_X))=w_1(T_{X(\R)}),
$$
where $(\kappa_X,\sigma_X)$, $(\kappa_Y,\sigma_Y)$ are the cohomology frames of $(X,\tau)$ and $(Y,\iota)$ respectively, $T_X$, $T_Y$ the two tangent bundles regarded as complex vector bundles, $X(\R)$, $Y(\R)$ the fixed point loci, and $f(\R)$ the map $X(\R)\to Y(\R)$ induced by $f$. It suffices to apply Remark \ref{w}.
\end{proof}

\subsection{Plücker embeddings of real Grassmann manifolds}\label{plemb}

Let $V_0$ be a real vector space of dimension  $p+q$, $G_q(V_0)$ the Grassmann manifold
of $q$-planes in $V_0$. Take   $V=\wedge^q V_0$ and let $X$ be the image of the Plücker embedding $Pl:G_q(V_0)\to \P(V)$. Denoting by $U$ the tautological rank $q$ bundle on 
$G_q(V_0)$ we have a natural identification $T_{G_q(V_0)}=\Hom(U,\underline{V}_0/U)$, which shows that
$$w_1(G_q(V_0))=(pq+1) w_1(U)\ .
$$
On the other hand it is well-known that $Pl^*({\lambda_V})=\det(U)$. In our case we have $n=m+1=pq+1$.  Since on any Grassmann manifold one has $w_1(U)\ne 0$, we have 
\begin{co} A map $[f]_X:Pl(G_q(V_0))\to\P(W)$  associated with a linear map $f\in\Hom(\wedge ^q(V_0),W)$ satisfying  $\ker(f)\cap Pl(G_q(V_0))=\emptyset$ is relatively orientable if and only if $pq+1\equiv p+q$ (mod 2), i.e., iff $p$ and $q$ are not both even.
\end{co}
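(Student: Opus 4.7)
My plan is to apply part (3) of Lemma~\ref{pullBack}: $[f]_X$ is relatively orientable iff $w_1(X)=n\,w_1(\lambda_{V,X})$ in $H^1(X,\Z_2)$, with $n=\dim W=\dim X+1=pq+1$. The task then reduces to expressing both sides as multiples of the nonzero class $w_1(U)\in H^1(G_q(V_0),\Z_2)$ and reading off the resulting congruence.

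For the right-hand side, the Plücker embedding gives $Pl^*\lambda_V=\det U$, so $w_1(\lambda_{V,X})=w_1(\det U)=w_1(U)$, hence $n\,w_1(\lambda_{V,X})=(pq+1)\,w_1(U)$.

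For the left-hand side I would compute $w_1(T_{G_q(V_0)})$ directly from the identification $T_{G_q(V_0)}\cong U^\vee\otimes(V_0/U)$: the mod-$2$ tensor product formula $w_1(E\otimes F)=(\rk F)\,w_1(E)+(\rk E)\,w_1(F)$, together with $w_1(V_0/U)=w_1(U)$ (which follows from $w(U)\,w(V_0/U)=w(\underline V_0)=1$), yields $w_1(X)=p\,w_1(U)+q\,w_1(V_0/U)=(p+q)\,w_1(U)$.

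Combining the two computations, the orientability criterion becomes $(p+q)\,w_1(U)=(pq+1)\,w_1(U)$ in $H^1(G_q(V_0),\Z_2)$, equivalently $(p-1)(q-1)\,w_1(U)\equiv 0$. Since $w_1(U)\ne 0$ on every real Grassmann manifold, this holds iff $(p-1)(q-1)$ is even, i.e.\ iff at least one of $p,q$ is odd, as asserted. I foresee no serious obstacle; the only mildly delicate point is the parity bookkeeping between $p+q$, $pq+1$ and the nonzero class $w_1(U)$, and all structural ingredients (Lemma~\ref{pullBack}, the pullback $Pl^*\lambda_V=\det U$, and the nonvanishing of $w_1(U)$) are already either proved or explicitly recorded above.
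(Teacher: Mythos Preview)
Your argument is correct and follows the same strategy as the paper: apply the criterion of Lemma~\ref{pullBack}(3), identify $Pl^*(\lambda_V)=\det U$, compute $w_1$ of the Grassmannian from $T_{G_q(V_0)}\cong\Hom(U,\underline V_0/U)$, and conclude using $w_1(U)\ne 0$.

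One point worth flagging: the text preceding the corollary records $w_1(G_q(V_0))=(pq+1)\,w_1(U)$, whereas your tensor-product computation gives $w_1(G_q(V_0))=(p+q)\,w_1(U)$. Your value is the correct one (for instance $G_2(\R^4)$ is orientable, so $w_1=0$, matching $(p+q)=4\equiv 0$ but not $pq+1=5$). With the paper's displayed value the criterion $w_1(X)=n\,w_1(\lambda_{V,X})$ would be an identity and give no condition at all; with your value one gets exactly the congruence $p+q\equiv pq+1\pmod 2$, i.e.\ $(p-1)(q-1)\equiv 0$, which is the assertion of the corollary. So your write-up both matches the intended approach and repairs a slip in the surrounding discussion.
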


Consider now the special case when $V_0=S^{p+q-1} W_0^\vee$, where $W_0=\R^2$, and take $W:=S^{pq} W_0^\vee$. This special case is important because we have  a {\it standard}  linear epimorphism  
$$\varphi_{\scriptscriptstyle\rm  Wronski}:\wedge^q(S^{p+q-1} W_0^\vee)\to S^{pq} W_0^\vee\ ,\ \varphi_{\scriptscriptstyle\rm  Wronski}(F_1\wedge\dots \wedge F_ q):=W(F_1,\dots,F_q)\ ,
$$
where 
$$W: [S^{p+q-1}(W_0^\vee)]^q\to S^{pq} W_0^\vee$$
 denotes the homogeneous Wronskian. Alternatively, $\varphi_{\scriptscriptstyle\rm  Wronski}$ can be
defined as the composition of the {\it standard isomorphism} 
$$\wedge^q(S^{p+q-1} W_0^\vee)\textmap{\simeq} S^q(S^p W_0^\vee)$$
  with the natural projection $S^q(S^p W_0^\vee)\to S^{pq} W_0^\vee$  (see \cite{AC}).

Up to a constant factor the map $\varphi_{\scriptscriptstyle\rm  Wronski}$ can  also be obtained using the inhomogeneous Wronskian 
$$w: (\R[s]_{\leq p+q-1})^q\to \R[s]_{\leq pq} \ ,$$
via the obvious identifications $\R[s]_{\leq k}\simeq S^k(W_0^\vee)$ (see \cite{AC} section 2.8). With this remark   the results of \cite{EG1} (where the inhomogeneous Wronskian is used) apply. First, it is well known that $\P(\ker (\varphi_{\scriptscriptstyle\rm  Wronski}))\cap  G_q(S^{p+q-1} W_0^\vee)=\emptyset$, so we have a well-defined projection
$$[\varphi_{\scriptscriptstyle\rm  Wronski}]: G_q(S^{p+q-1} W_0^\vee)\to \P(S^{pq} W_0^\vee)
$$ 
for which the degree is known (see \cite{EG1}).

When $p$ and $q$ are not both even, then $[\varphi_{\scriptscriptstyle\rm  Wronski}]$ is relatively orientable and:
$$|\deg|[\varphi_{\scriptscriptstyle\rm  Wronski}]=\left\{
\begin{array}{ccc}
0&\rm if& p,\ q\hbox{ are both odd} \\
I(p,q)&\rm if &p+q\hbox{ is odd}\ . 
\end{array}\right. 
$$
Here $I(\cdot,\cdot)$ is   symmetric, and  for $2\leq p\leq q$ the integer $I(p,q)$ is given by:
$$I(p,q)=\frac{1! 2!\cdots (p-1)!(q-1)!(q-2)!\cdots (q-(p-1))!\big(\frac{pq}{2}\big)! }{(q-p+2)!(q-p+4)!\cdots (q+p-2)!\big(\frac{q-p+1)}{2}\big)!\big(\frac{q-p+3)}{2}\big)!\cdots \big(\frac{q+p-1)}{2}\big)!}\ .
$$
\begin{re} Using the first formula in Lemma \ref{pullBack} and Lemma 15 in \cite{OT2} one obtains a canonical isomorphism
$$\det(T_{Pl(G_q(S^{p+q-1}W_0^\vee))}^\vee)\otimes [f]^*(\det(T_{\P(S^{pq} W_0^\vee)})=$$
$$
[\det(W_0)^\vee]^{\otimes \frac{q(q-1)(p^2-2p-q)}{2}}\otimes [\det(U)^\vee]^{\otimes (p-1)(q-1)}\ ,
$$
for any $f\in\Hom(\wedge^q( S^{p+q-1} W_0^\vee), S^{pq}W_0^\vee)$  with   $\P(\ker(f))\cap Pl(G_q(S^{p+q-1} W_0^\vee))=\emptyset$. This shows that $[f]_{Pl(G_q(S^{p+q-1} W_0^\vee))}$ is canonically relatively oriented if and only if either $p$, $q\in 2\N+1$, or $p\in2\N+1$ and $q\in 4\N$, or $p\in 2\N$ and $q\in 4\N+1$.
\end{re} 
Therefore, if the pair $(p,q)$ satisfies one of these three conditions  then, for any regular value $[P]\in \P(S^{pq}W_0^\vee)$  of  the Wronski map $[\varphi_{\scriptscriptstyle\rm  Wronski}]$, one can associate an intrinsic sign to any element in the fibre $[\varphi_{\scriptscriptstyle\rm  Wronski}]^{-1}([P])$ (without having to orient the plane $W_0$). It would be interesting to have a geometric interpretation of these intrinsic signs.
\def\Quot{\mathrm{Quot}}

\subsection{Universal pole placement map}  Let $W_0$ be a real plane,  $\underline{V}_0$ the trivial bundle $\P(W_0)\times V_0$ on the projective line $\P(W_0)$ with fibre a $p+q$ dimensional vector space $V_0$. For  $\nu\in\N$ denote by   $\Quot^{p,\nu}_{\P(W_0)}(\underline{V}_0)$ the quot space of equivalence classes of quotients 
$s:\underline{V}_0\to Q$  of $\underline{V}_0$ with $\rk(\ker(s))=p$ and $\det(\ker(s))\simeq\mathcal{O}_{\P(W_0)}(-\nu)$. Every such quotient $s$ defines an element $QPl(s)\in \P(\wedge^p V_0\otimes S^\nu W_0^\vee)$  by  the formula
$$QPl(s):=[\wedge ^p k_s]\ .$$
Here $k_s\in\Hom(\ker(s),  \underline{V}_0)$ denotes the embedding of $\ker(s)$ in $\underline{V}_0$,  and 
$$\wedge ^p k_s \in H^0(\Hom(\wedge^p\ker(s), \wedge ^p \underline{V}_0))=H^0(\wedge^p\underline{V}_0\otimes (\wedge^p\ker(s))^\vee)$$
is the $p$-th exterior  power of $k_s$. Since  $\det(\ker(s))^\vee\simeq\mathcal{O}_{\P(W_0)}(\nu)$, the section $\wedge ^p k_s$ of $\wedge^p\underline{V}_0\otimes (\wedge^p\ker(s))^\vee$ defines an element of $H^0(\wedge^p\underline{V}_0(\nu))=\wedge^p V_0\otimes S^\nu W_0^\vee$, which is  well defined up to multiplication with a non-vanishing scalar. Recall that we have a perfect paring $\wedge^q V_0\times \wedge^p V_0\to \det V_0$, which induces an isomorphism 
$$\wedge^p V_0\to (\wedge^q V_0)^\vee\otimes\det(V_0)=\Hom(\wedge^q V_0,\det(V_0))\ .$$

Therefore the element $QPl(s)=[\wedge ^p k_s]$ can be regarded as an element of 
$$\P(\Hom(\wedge^q V_0,S^\nu W_0^\vee\otimes\det(V_0)))=\P(\Hom(\wedge^q V_0,S^\nu W_0^\vee))$$
 as claimed. Note that in general the map 
$$QPl: \Quot^{p,\nu}_{\P(W_0)}(\underline{V}_0)\to \P(\Hom(\wedge^q V_0,S^\nu W_0^\vee))$$
 is not an embedding.

A quotient $[s]\in \Quot^{p,\nu}_{\P(W_0)}(\underline{V}_0)$ defines a central projection 
$$\psi_{[s]}:\P(\wedge^q V_0)\setminus \P(\ker(\wedge^p k_s)) \to  \P(S^\nu W_0^\vee)\ .$$
Since $\P(\wedge^q V_0)$ contains the  image of the Plücker embedding  %
 $$Pl:G_q(V_0)\to \P(\wedge^q V_0)\  ,$$
  it is interesting to study the composition
$$\phi_{[s]}:=\psi_{[s]}\circ Pl:G_q(V_0)\setminus \P(\ker(\wedge^p k_s))\to \P(S^\nu W_0^\vee)
$$
associated with an element $[s]\in \Quot^{p,\nu}_{\P(W_0)}(\underline{V}_0)$. When $\nu=pq$ we can ask if   this projection is defined on the all of  $G_q(V_0)$, and  when $p$ and $q$ are not both even,  so that $\phi_{[s]}$ is relatively orientable,  one can also ask for the degree.
\begin{re} Consider  the special case where $V_0=S^{p+q-1} W_0^\vee$ and $W_0=\R^2$. It seems to be well known \cite{EG3} that in this case there exists an element $$s_{\scriptscriptstyle\rm  Wronski} \in \Quot^{p,pq}_{\P(W_0)}(\underline{V}_0)$$ such that  
$$\varphi_{[s_{\scriptscriptstyle\rm  Wronski}]}=[\varphi_{\scriptscriptstyle\rm  Wronski}]$$
is the Wronski projection introduced in section \ref{plemb}.

\end{re}

Note that the chamber structure of  $\Hom(\wedge^q V_0,S^{pq} W_0^\vee)_{G_q(V_0)}$ induces  a chamber structure on the complement 
$$\Quot^{p,pq}_{\P(W_0)}(\underline{V}_0)\setminus QPl^{-1}(\P({\cal W}_{G_q(V_0)}))$$
of the  pull-back of the projectivized wall $\P({\cal W}_{G_q(V_0)}))$  via $QPl$.

The image $\phi_{[s]}(U)\in \P(S^{pq} W_0^\vee)$ of a $q$-plane $U\in G_q(V_0)\setminus \P(\ker(\wedge^p k_s))$   can be explicitly described as follows: Denote by $j_U:\underline{U}\to\underline{V}_0$ the obvious embedding of the  trivial rank $q$-bundle $\underline{U}:=\P(W_0)\times U$ in  $\underline{V}_0$, and by $\rho_U:\underline{V}_0\to \underline{V}_0/\underline{U}$ the projection onto the quotient bundle. The  determinant $\det(\rho_U\circ k_s)$ of the composition $\rho_U\circ k_s:\ker(s)\to \underline{V}_0/\underline{U}$ can be regarded as an element of $\wedge^p(V_0/U)\otimes S^{pq} W_0^\vee $.  If this element is non-zero it defines an element 
$$PP[s](U)\in \P(\wedge^p(V_0/U)\otimes S^{pq} W_0^\vee)=\P(S^{pq} W_0^\vee )\ ,$$ 
called the {\it pole placement} of $[s]\in \Quot^{p,pq}_{\P(W_0)}(\underline{V}_0) $ at $U\in G_q(V_0)$.  On the other hand, one can easily prove  that $\det(\rho_U\circ k_s)$ is non-zero if  and only if  $\phi_{[s]}$ is defined at $U$.  
\begin{lm}\label{comm} When $\phi_{[s]}$ is defined at $ U$, one has 
\begin{equation}\phi_{[s]}(U)=PP[s](U)\ .
\end{equation}
\end{lm}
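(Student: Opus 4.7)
The strategy is to unpack both sides and recognize them as two avatars of the same element, related by the canonical decomposition $\det V_0\simeq\det U\otimes\det(V_0/U)$.

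I would first compute $\phi_{[s]}(U)$ explicitly. Choose a basis $u_1,\dots,u_q$ of $U$, so that $Pl(U)=[u_1\wedge\cdots\wedge u_q]$. By construction, $\psi_{[s]}$ is the central projection determined by $\wedge^p k_s\in\wedge^p V_0\otimes S^{pq}W_0^\vee$, reinterpreted as a linear map $\wedge^q V_0\to \det V_0\otimes S^{pq}W_0^\vee$ through the wedge pairing $\wedge^q V_0\otimes\wedge^p V_0\to\det V_0$. Hence $\phi_{[s]}(U)=\psi_{[s]}(Pl(U))$ is the projective class of the section
$$(u_1\wedge\cdots\wedge u_q)\wedge(\wedge^p k_s)\in\det V_0\otimes S^{pq}W_0^\vee.$$
In parallel, choosing a local frame $e_1,\dots,e_p$ of $\ker(s)$ near a point of $\P(W_0)$, the section $\wedge^p k_s$ is locally represented (up to the twisting factor encoding $\mathcal{O}_{\P(W_0)}(pq)$) by $e_1\wedge\cdots\wedge e_p$, while $\det(\rho_U\circ k_s)$ is locally represented by $\bar e_1\wedge\cdots\wedge\bar e_p\in\det(V_0/U)$, where $\bar e_i=\rho_U(e_i)$.

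The main ingredient is then the elementary identity attached to the short exact sequence $0\to U\to V_0\to V_0/U\to 0$: under the canonical isomorphism $u_U:\det U\otimes\det(V_0/U)\to\det V_0$ of Remark \ref{det-rem}, one has
$$(u_1\wedge\cdots\wedge u_q)\wedge(v_1\wedge\cdots\wedge v_p)=(u_1\wedge\cdots\wedge u_q)\otimes(\bar v_1\wedge\cdots\wedge\bar v_p)$$
for every $v_1,\dots,v_p\in V_0$. Applying this pointwise with $v_i=e_i$ yields the global identity
$$(u_1\wedge\cdots\wedge u_q)\wedge(\wedge^p k_s)=(u_1\wedge\cdots\wedge u_q)\otimes\det(\rho_U\circ k_s)$$
of sections of $\det U\otimes\det(V_0/U)\otimes S^{pq}W_0^\vee$.

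To conclude, I would invoke the canonical identification $\P(L\otimes M)\simeq\P(M)$, valid for any line $L$ and vector space $M$. Since $u_1\wedge\cdots\wedge u_q$ is a nonzero element of $\det U$, extracting it from the left-hand tensor factor identifies the projective class of $(u_1\wedge\cdots\wedge u_q)\wedge(\wedge^p k_s)$ in $\P(\det V_0\otimes S^{pq}W_0^\vee)=\P(S^{pq}W_0^\vee)$ with that of $\det(\rho_U\circ k_s)$ in $\P(\det(V_0/U)\otimes S^{pq}W_0^\vee)=\P(S^{pq}W_0^\vee)$, giving $\phi_{[s]}(U)=PP[s](U)$. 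I do not anticipate a genuine obstacle: the argument is essentially the bookkeeping of natural determinantal identifications, the substantive content being the classical wedge-into-quotient formula for the determinant of a short exact sequence.
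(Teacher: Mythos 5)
Your proof is correct and follows essentially the same route as the paper: both expand the wedge $(\wedge^q U)\wedge(\wedge^p k_s)$ and read off $\det(\rho_U\circ k_s)$ via the determinantal decomposition attached to $0\to U\to V_0\to V_0/U\to 0$, then conclude projectively. The only cosmetic difference is that the paper chooses an explicit complement $U^\bot$ of $U$ and uses the ordering $\det(V_0)=\det(V_0/U)\otimes\det(U)$, whereas you invoke the wedge-into-quotient identity directly with the ordering $\det U\otimes\det(V_0/U)$; the sign discrepancy is irrelevant since the conclusion is projective.
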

\begin{proof} Consider the rank $p$ vector bundle $K_s$ on $\P(W_0)$ associated with the sheaf $\ker(s)$,  choose $x\in\P(W_0)$ and vectors $l_1,\dots,l_p\in K_{s,x}$. Let $(v_1,\dots,v_p,v_{p+1},\dots v_{p+q})$ be a basis of $V_0$ such that $(v_{p+1},\dots,v_{p+q})$ is a basis of $U$.  Let $U^\bot:=\langle v_1,\dots,v_p\rangle$ and $\bar k_s$ the composition of $k_s$ with the projection on $U^\bot$. Regarding $\wedge^p k_s$ as an element in 
$$H^0(\P(W_0),{\cal H}om(\det(K_s),
{\cal H}om(\wedge^q \underline{V_0}, \det(\underline{V}_0)))=\Hom(\wedge^q V_0,\wedge^{p+q} V_0)\otimes S^\nu W_0^\vee $$
we have
$$(\wedge^p k_s)(l_1\wedge\dots\wedge l_p)(v_{p+1}\wedge\dots\wedge v_{p+q})=k_s(l_1)\wedge\dots\wedge k_s(l_p)\wedge v_{p+1}\wedge\dots\wedge v_{p+q}=
$$
$$=\bar k_s(l_1)\wedge\dots\wedge \bar k _s(l_p)\wedge v_{p+1}\wedge\dots\wedge v_{p+q}=[\det(\rho_U\circ k_s)(l_1\wedge\dots\wedge l_p)]\otimes (v_{p+1}\wedge\dots\wedge v_{p+q})\ ,
$$
where in the last  equality we have used the canonical isomorphism $\det(V_0)=\det(V_0/U)\otimes\det(U)$. Therefore one obtains the  following equality in the space  $S^\nu W_0^\vee\otimes\det(V_0)$ 
$$\wedge ^p k_s(v_{p+1}\wedge\dots\wedge v_{p+q})=\det(\rho_U\circ k_s)\otimes (v_{p+1}\wedge\dots\wedge v_{p+q})\ ,
$$
which shows that $\wedge ^p k_s(v_{p+1}\wedge\dots\wedge v_{p+q})$ and $\det(\rho_U\circ k_s)$ define the same element in $\P(S^\nu W_0^\vee )$.
\end{proof}

For a quotient $[s]\in  \Quot^{p,pq}_{\P(W_0)}(\underline{V}_0)$ the assigment $U\mapsto PP[s](U)$ defines a rational map $G_q(V_0)\dasharrow \P(S^{pq} W_0^\vee)$. We denote by $\mathrm{Rat}(G_q(V_0),\P(S^{pq} W_0^\vee))$ the set of such rational maps.  Letting $[s]$ vary in $ \Quot^{p,pq}_{\P(W_0)}(\underline{V}_0)$ we obtain  a map 
$$PP:  \Quot^{p,pq}_{\P(W_0)}(\underline{V}_0)\to \mathrm{Rat}(G_q(V_0),\P(S^{pq} W_0^\vee))\ ,$$
which we call the universal pole placement map.    Lemma \ref{comm} shows that the following diagram is commutative:

$$
\begin{array}{c}
\unitlength=1mm
\begin{picture}(42,32)(-4,-24)
\put(-15,4){$\Quot^{p,pq}_{\P(W_0)}(\underline{V}_0)$}
\put(10,5){\vector(2,0){19}}
\put(32,4){$\P(\Hom(\wedge^q V_0,S^{pq}W_0^\vee))$}
\put(-3,1){\vector(0, -3){18}}
\put(-22, -22){$\mathrm{Rat}(G_q(V_0),\P(S^{pq} W_0^\vee))$}
\put(-2,-8){$PP$}
\put(33,-10){$Pl^*$}
\put(18,7){$QPl$}
\put(43,1){\vector(-3, -2){28}}

\end{picture} 
\end{array} 
$$
\subsection{ A real subspace problem} 

Let $W_0$, $V_0$ be   real vector spaces of dimensions 2 and   $p+q$ respectively. We denote by $S^{pq}_0(\P(W_0))$ the subset of elements $\sg$ in the symmetric power $S^{pq}(\P(W_0))$ consisting of pairwise distinct points.  
\\ \\
{\bf Subspace problem:} Fix a regular algebraic map $\gamma:\P(W_0)\to G_p(V_0)$ of algebraic degree $pq$, and an element $\sg\in S^{pq}_0(\P(W_0))$. Count  the $q$-dimensional linear subspaces $U\subset V_0$ such that 
$$\P(U)\cap \P(\gamma(\xi))\ne \emptyset\ \ \forall\xi\in \sg\ .\eqno{(S_{\gamma,\sg})} $$
 \vspace{2mm}

We will show that this problem has an interesting interpretation which, for general  $\sg\in S^{pq}(\P(W_0))$, allows one to associate a sign to every  $q$-dimensional subspace $U$ satisfying the condition $(S_{\gamma,\sg})$ and to compute the total number of solutions of $(S_{\gamma,\sg})$ when these signs are taken into account. Indeed, for any regular algebraic map $\gamma:\P(W_0)\to G_p(V_0)$ there exists a {\it bundle epimorphism}
$$s_\gamma:\underline{V}_0\to Q_\gamma
$$
classifying $\gamma$, i.e., such that
$$\gamma(\xi)=\ker(s_{\gamma,\xi})\ \ \forall \xi\in\P(W_0)\ .
$$
The equivalence class $[s_\gamma]\in \Quot^{p,pq}_{\P(W_0)}(\underline{V}_0)$ is well defined, and the assigment $\gamma\mapsto [s_\gamma]$ defines an embedding %
$$\mathrm{Mor}^{pq}(\P(W_0),G_p(V_0))\to \Quot^{p,pq}_{\P(W_0)}(\underline{V}_0)\ ,$$
 where $\mathrm{Mor}^{pq}(\P(W_0),G_p(V_0))$ is the space of regular algebraic morphisms $\P(W_0)\to G_p(V_0)$ of algebraic degree $pq$. Consider the pole placement map $\phi_{[s_\gamma]}:G_q(V_0)\to \P(S^{pq} W_0^\vee)$ corresponding to  the quotient $[s_\gamma]$.

Let $P_\sg\in \R[W_0]_{pq}=S^{pq} W_0^\vee$ be a  homogeneous  polynomial of degree $pq$ on $W_0$ whose set of roots coincides with $\sg$.  This polynomial is   determined by $\sg$ up to multiplication by a non-vanishing constant.  
\begin{lm} The set of solutions of the problem $(S_{\gamma,\sg})$ can be identified with the fibre $\phi_{[s_\gamma]}$ over $[P_\sg]$:
$$\phi_{[s_\gamma]}^{-1}([P_\sg])=\{U\in G_q(V_0)|\ \P(U)\cap \P(\gamma(\xi))\ne \emptyset\ \ \forall\xi\in \sg\}\ .
$$
\end{lm}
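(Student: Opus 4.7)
The plan is to apply Lemma \ref{comm} in order to reduce the claim to a pointwise computation of the determinant section $\det(\rho_U\circ k_{s_\gamma})$. By that lemma, for any $U$ in the domain of $\phi_{[s_\gamma]}$ one has $\phi_{[s_\gamma]}(U)=PP[s_\gamma](U)=[\det(\rho_U\circ k_{s_\gamma})]$, regarded as an element of $\P(\wedge^p(V_0/U)\otimes S^{pq}W_0^\vee)=\P(S^{pq}W_0^\vee)$, where the last identification is canonical because $\wedge^p(V_0/U)$ is a line. So it will suffice to prove that, as projective classes in $\P(S^{pq}W_0^\vee)$, $\det(\rho_U\circ k_{s_\gamma})$ coincides with $P_\sg$ if and only if $U$ solves $(S_{\gamma,\sg})$.

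Next I would compute the zero locus of the section $\det(\rho_U\circ k_{s_\gamma})$ on $\P(W_0)$ fibre by fibre. Fix $\xi\in\P(W_0)$; the fibre $(\rho_U\circ k_{s_\gamma})_\xi$ is a linear map $\gamma(\xi)=\ker(s_{\gamma,\xi})\to V_0/U$ between two vector spaces of the same dimension $p$, so its determinant vanishes at $\xi$ if and only if this map has a nontrivial kernel, equivalently $\gamma(\xi)\cap U\neq\{0\}$, equivalently $\P(U)\cap\P(\gamma(\xi))\neq\emptyset$. Hence the zero locus of the degree $pq$ polynomial $\det(\rho_U\circ k_{s_\gamma})\in S^{pq}W_0^\vee$ is exactly the set of $\xi\in\P(W_0)$ at which $\P(U)$ meets $\P(\gamma(\xi))$.

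From here both inclusions follow immediately. If $\phi_{[s_\gamma]}(U)=[P_\sg]$, then $\det(\rho_U\circ k_{s_\gamma})$ is a nonzero scalar multiple of $P_\sg$, so its zero set contains $\sg$ and $U$ satisfies $(S_{\gamma,\sg})$. Conversely, suppose $U$ is in the domain of $\phi_{[s_\gamma]}$ and satisfies $(S_{\gamma,\sg})$; the polynomial $\det(\rho_U\circ k_{s_\gamma})$ then has degree $pq$ and vanishes at the $pq$ distinct points of $\sg$, so it is proportional to $P_\sg$ (which is also of degree $pq$ with simple zeros precisely on $\sg$), and therefore $\phi_{[s_\gamma]}(U)=[P_\sg]$. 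I expect no serious obstacle: the only mildly delicate point is that a $U$ satisfying $(S_{\gamma,\sg})$ could a priori lie in the indeterminacy locus of the rational map $PP[s_\gamma]$, i.e.\ have $\det(\rho_U\circ k_{s_\gamma})\equiv 0$; but such $U$ are by definition not in $\phi_{[s_\gamma]}^{-1}([P_\sg])$, so the stated equality is understood within the domain of definition of $\phi_{[s_\gamma]}$.
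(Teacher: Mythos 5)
Your argument follows the paper's proof essentially verbatim: both reduce via Lemma~\ref{comm} to the pointwise identification of the zero locus of $\det(\rho_U\circ k_{s_\gamma})$ with the set of $\xi$ where $\gamma(\xi)\cap U\neq\{0\}$, and then conclude by the degree count (a nonzero degree-$pq$ form on $\P(W_0)$ vanishing on the $pq$ distinct points of $\sg$ must be proportional to $P_\sg$). Your explicit remark that the equality is read inside the domain of $\phi_{[s_\gamma]}$ is a helpful clarification that the paper leaves implicit, and you have silently corrected a sign typo in the paper (which reads $\det(\rho_U\circ k_{s_\gamma,\xi})\ne 0$ where $=0$ is intended).
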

\begin{proof} Indeed, using the definition of $\phi_{[s]}$, it follows that for any $U\in G_q(V_0)$ the set of zeros  of a polynomial $P_U\in  \R[W_0]_{pq}$  representing $\phi_{[s_\gamma]}(U)\in\P(S^{pq} W_0^\vee)$ coincides with the set of  points $\xi\in \P(W_0)$ for which $\det(\rho_U\circ k_{s_\gamma,\xi})\ne 0$, i.e., with the set of points $\xi\in \P(W_0)$ for which $\gamma(x)\cap U\ne\{0\}$. If the latter set  is $\sg$ (which has maximal cardinal $pq$) this condition is equivalent to $[P_U]=[P_\sg]$. Therefore $\phi_{[s_\gamma]}(U)=[P_{\sg}]$ if and only if $U$ satisfies the condition $(S_{\gamma,\sg})$.
\end{proof}

\begin{co} Suppose that $p$ and $q$ are not both even and let $\gamma:\P(W_0)\to G_p(V_0)$ be a regular algebraic map of algebraic degree $pq$ such that  
$$[s_\gamma]\in \Quot^{p,pq}_{\P(W_0)}(\underline{V}_0)\setminus QPl^{-1}(\P({\cal W}_{G_q(V_0)}))\ .$$
Then the pole placement map $\phi_{[s_\gamma]}:G_q(V_0)\to\P(S^{pq}W_0^\vee)$ is well defined.  Choose a relative orientation $\nu$ of  $\phi_{[s_\gamma]}$. There exists an open  dense subset   $\Sg_\gamma\subset S^{pq}_0(\P(W_0))$ such that for any $\sg\in \Sg_\gamma$ one has
$$\sum_{U\hbox{ solves }(S_{\gamma,\sg})} \varepsilon_{U,\nu}=\deg_\nu(\phi_{[s_\gamma]})\ .
$$
\end{co}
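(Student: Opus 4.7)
My plan has three main steps: verify that $\phi_{[s_\gamma]}$ is a well-defined, relatively orientable smooth map between closed connected $pq$-manifolds; construct the open dense subset $\Sg_\gamma$ using Sard's theorem together with invariance of domain; and then apply Proposition~\ref{degformula} to the fibre identified by Lemma~\ref{comm}.

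First, I would note that the hypothesis $[s_\gamma]\notin QPl^{-1}(\P({\cal W}_{G_q(V_0)}))$ means precisely that $\P(\ker(\wedge^p k_{s_\gamma}))$ does not meet the Plücker image $Pl(G_q(V_0))$. Hence the central projection $\psi_{[s_\gamma]}$ is defined on all of $Pl(G_q(V_0))$, and the composition $\phi_{[s_\gamma]}=\psi_{[s_\gamma]}\circ Pl$ is a smooth map $G_q(V_0)\to \P(S^{pq}W_0^\vee)$ between closed connected real manifolds of dimension $pq$. The corollary at the start of section~\ref{plemb} then gives relative orientability, since $p$ and $q$ are not both even, so a relative orientation $\nu$ as in the statement exists.

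Next, I would define $\Sg_\gamma$ to be the set of $\sg\in S^{pq}_0(\P(W_0))$ such that $[P_\sg]$ is a regular value of $\phi_{[s_\gamma]}$. The set $R\subset\P(S^{pq}W_0^\vee)$ of regular values is open (since $G_q(V_0)$ is compact, so its critical image is closed) and dense by Sard's theorem. The assignment $\Phi\colon\sg\mapsto [P_\sg]$ defines a smooth injection $S^{pq}(\P(W_0))\to \P(S^{pq}W_0^\vee)$; restricted to $S^{pq}_0(\P(W_0))$, the source is a smooth real manifold of dimension $pq$, the same as the target, so by invariance of domain $\Phi$ is an open embedding on $S^{pq}_0$. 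Consequently $\Sg_\gamma=\Phi^{-1}(R)\cap S^{pq}_0(\P(W_0))$ is open and dense.

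Finally, I would fix $\sg\in\Sg_\gamma$ and invoke Lemma~\ref{comm}, which identifies the fibre $\phi_{[s_\gamma]}^{-1}([P_\sg])$ with the set of $U\in G_q(V_0)$ solving $(S_{\gamma,\sg})$. Since $[P_\sg]$ is a regular value, this fibre is finite and $\phi_{[s_\gamma]}$ is a local diffeomorphism at each of its points. Setting $\varepsilon_{U,\nu}:=\deg_{\nu,U}(\phi_{[s_\gamma]})\in\{\pm 1\}$ and applying Proposition~\ref{degformula} then yields the claimed identity directly. The only subtle point is the openness/density argument for $\Sg_\gamma$; everything else is a clean combination of Lemma~\ref{comm} with the definition of the degree at a regular value.
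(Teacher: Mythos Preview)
Your argument is correct and follows essentially the same route as the paper's: identify $S^{pq}_0(\P(W_0))$ with an open subset of $\P(S^{pq}W_0^\vee)$ via $\sg\mapsto[P_\sg]$, apply Sard's theorem (using compactness of $G_q(V_0)$ for openness of the regular-value set), and then read off the signed count from Proposition~\ref{degformula}. One minor slip: the fibre identification you invoke in the final step is the content of the unlabeled lemma immediately preceding the corollary, not Lemma~\ref{comm} (which only asserts $\phi_{[s]}(U)=PP[s](U)$).
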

\begin{proof} The map $\sg\mapsto [P_\sg]$ identifies $S^{pq}_0(\P(W_0))$ with an open subset of $\R[W_0]_{pq}=S^{pq} W_0^\vee$. It suffices to apply Sard theorem to the map $\phi_{[s_\gamma]}$ and to take into account that the set of regular values of a proper smooth map is always open.
\end{proof}



{\ }
\vspace{10mm}  \\
{\small Christian Okonek: \\
Institut f\"ur Mathematik, Universit\"at Z\"urich,
Winterthurerstrasse 190, CH-8057 Z\"urich,\\
e-mail: okonek@math.uzh.ch
\\  \\
Andrei Teleman: \\
CMI,   Aix-Marseille Universit\'e,  LATP, 39  Rue F. Joliot-Curie,  13453
Marseille Cedex 13,   e-mail: teleman@cmi.univ-mrs.fr
}

\end{document}